\numberwithin{equation}{section}
\newtheorem{theorem}{Theorem}[section]
\newtheorem{proposition}[theorem]{Proposition}
\newtheorem{corollary}[theorem]{Corollary}
\newtheorem{lemma}[theorem]{Lemma}
\theoremstyle{definition}
\newtheorem{assumption}[theorem]{Assumption}
\newtheorem{remark}[theorem]{Remark}
\newtheorem{observation}[theorem]{Observation}
\newtheorem{example}[theorem]{Example}
\newtheorem{definition}[theorem]{Definition}
\newcommand{\R}{\mathbb{R}}      
\newcommand{\QW}{\mathbb{Q}}
\newcommand{\E}{\mathbb{E}}
\newcommand{\tn}{\textnormal}
\newcommand{\ind}{\mathbf{1}}
\newcommand{\Norm}{\|\cdot\|}
\newcommand{\N}{\mathbb{N}}
\newcommand{\dom}{\textnormal{dom}}
\newcommand{\cl}{\textnormal{cl}}
\newcommand{\eps}{\varepsilon}
\newcommand{\Linfty}{L^{\infty}(\mathfrak P)}
\newcommand{\CN}{\mathcal N}
\newcommand{\CF}{\mathcal F}
\newcommand{\CX}{\mathcal X}
\newcommand{\CP}{\mathfrak P}
\newcommand{\CC}{\mathcal C}
\newcommand{\CL}{\mathcal L}
\newcommand{\CH}{\mathcal H}
\newcommand{\CB}{\mathcal B}
\newcommand{\CY}{{\mathcal Y}}
\newcommand{\CI}{{\mathcal I}}
\newcommand{\CQ}{\mathfrak Q}
\newcommand{\w}{\widehat}
\newcommand{\mf}{\mathfrak}
\newcommand{\ca}{\mathbf{ca}}
\newcommand{\mbf}{\mathbf}
\newcommand{\peq}{\preceq}
\newcommand{\tle}{\trianglelefteq}
\newcommand{\Max}{{\rm Max}}
\newcommand{\Om}{\Omega}
\newcommand{\om}{\omega}
\newcommand{\si}{\sigma}
\newcommand{\la}{\lambda}
\newcommand{\al}{\alpha}
\newcommand{\ga}{\gamma}
\newcommand{\ep}{\varepsilon}
\renewcommand{\P}{\mathbb P}
\newcommand{\Q}{\mathbb Q}
\title[Separability vs.\ robustness of Orlicz spaces]{Separability vs.\ robustness of Orlicz spaces: financial and economic perspectives}
\author{Felix-Benedikt Liebrich}
\address{Institute of Actuarial and Financial Mathematics \& House of Insurance,\
Leibniz University Hannover, Germany}
\email{felix.liebrich@insurance.uni-hannover.de}
\author{Max Nendel}
\address{Center for Mathematical Economics, Bielefeld University, Germany}
\email{max.nendel@uni-bielefeld.de}
\thanks{The authors thank Julian H\"olzermann, Fabio Maccheroni, Frank Riedel, and Johannes Wiesel for helpful comments and suggestions related to this work. The second author gratefully acknowledges financial support of the German Research Foundation via CRC 1283.}
\begin{document}

\parindent 0em \noindent
\thispagestyle{empty}

\begin{abstract}
We investigate robust Orlicz spaces as a generalisation of robust $L^p$-spaces. Two constructions of such spaces are distinguished, a top-down approach and a bottom-up approach.\ We show that separability of robust Orlicz spaces or their subspaces has very strong implications in terms of the dominatedness of the set of priors and the lack of order completeness. Our results have subtle implications for the field of robust finance. For instance, norm closures of bounded continuous functions with respect to the worst-case $L^p$-norm, as considered in the $G$-framework, lead to spaces which are lattice isomorphic to a sublattice of a classical $L^1$-space lacking, however, any form of order completeness.\ We further show that the topological spanning power of options is always limited under nondominated uncertainty.   

      \smallskip
        \noindent \textsc{Keywords:} Orlicz space, model uncertainty, nonlinear expectation, capacity, $G$-Framework, Dedekind completeness

	\smallskip
	\noindent \textsc{AMS 2020 Subject Classification:} 46E30; 28A12; 60G65 
\end{abstract}

\maketitle


\section{Introduction}

Since the beginning of this century, the simultaneous consideration of families of prior distributions instead of a single probability measure has become of fundamental importance for the risk assessment of financial positions. In this context, one often speaks of \textit{model uncertainty} or \textit{ambiguity}, where the uncertainty is modeled by a set $\CP$ of probability measures. Especially after the subprime mortgage crisis, the desire for mathematical models based on \textit{nondominated} families of priors arose: no single reference probability measure can be chosen which determines whether an event is deemed certain or negligible. Beside the seminal contribution of \cite{BN}, the to date most prominent example of a model involving nondominated uncertainty is a Brownian motion with uncertain volatility, the so-called $G$-Brownian motion, cf.\ \cite{PengG}.\footnote{~More precisely, the resulting measures are mutually singular.} The latter is intimately related to the theory of second-order backward stochastic differential equations, cf.\ \cite{MR2319056}. An extensive strand of literature has formed around this model.

On another note, there has been renewed interest in the role of \textit{Orlicz spaces} in mathematical finance in the past few years. They have appeared, e.g., as canonical model spaces for risk measures, premium principles, and utility maximisation problems, see \cite{OrliczQuant,Cerny,CheriditoLi, FatouRep} and many others.

{The present manuscript investigates \textit{robust} Orlicz spaces in a setting of potentially nondominated sets $\CP$ of probability measures and the role of separability in financial and economic applications.} \textit{A priori}, {the construction of robust Orlicz spaces} can follow two conceivable paths which often lead to the same result if a probability space is underlying, {i.e., $\CP=\{\P\}$.} The classical $L^p(\P)$-space, for $p\in[1,\infty)$, shall serve as illustration. It can be obtained either by a \textit{top-down} approach, considering the maximal set of all equivalence classes of real-valued measurable functions with finite norm $\Norm_{L^p(\P)}:=\E_\P[|\,\cdot\,|^p]^{1/p}$; or, equivalently, by proceeding in a \textit{bottom-up} manner, closing a smaller test space of, say, bounded random variables w.r.t.\ the norm $\Norm_{L^p(\P)}$. If the underlying state space is topological, one has even more degrees of freedom and may select suitable spaces of continuous functions as a test space. Both approaches lead to a \textit{Banach lattice} which naturally carries the $\P$-almost-sure order, and this turns out to be $L^p(\P)$ in both cases. Morally speaking, the reason for this equivalence is  that $\Norm_{L^p(\P)}$ is not very robust and rather insensitive to the tail behaviour of a given random variable. In general, the two paths tend to diverge substantially for \textit{(robust) Orlicz spaces}, {while both turn out to have their economic merits. The present note may be understood as a comparison of the two approaches against the backdrop of financial applications.}

Throughout, we consider a fixed measurable space $(\Omega,\CF)$, a nonempty set of probability measures (priors) $\CP$ on $(\Om,\CF)$, and a family $\Phi=(\phi_\P)_{\P\in\CP}$ of Orlicz functions, {possibly varying with the prior.} On the quotient space $L^0(\CP)$ of all real-valued measurable functions on $(\Omega,\CF)$ up to $\CP$-quasi-sure ($\CP$-q.s.) equality, we consider the robust Luxemburg norm
 \begin{equation}\label{eq:introduction}
  \|X\|_{L^\Phi(\CP)}:=\sup_{\P\in \CP}\|X\|_{L^{\phi_\P}(\P)}\in [0,\infty],\quad \text{for }X\in L^0(\CP),
 \end{equation}
where $\|\cdot\|_{L^{\phi_\P}(\P)}$ denotes the Luxemburg seminorm for $\phi_\P$ under the probability measure $\P\in \CP$. The (top-down) robust Orlicz space $L^\Phi(\CP)$ is then defined to be the space of all $X\in L^0(\CP)$ with $\|X\|_{L^\Phi(\CP)}<\infty$. 

{In the field of robust finance, special cases of robust Orlicz spaces have been studied, e.g., in the $G$-Framework \cite{MR2974730,MR2925572} and over general measurable spaces \cite{surplus,KupSvi,Liebrich,Maggisetal,Owari}. However, defining $L^p$-spaces by a worst-case approach over a set of priors instead of a single prior stems back at least to \cite{MR182874}. Noteworthy are \cite{MR856304,MR1038172}, where a general Orlicz function is considered instead of the $L^p$-case. We essentially work in their framework. A more detailed discussion of these references is provided in Remark~\ref{rem:RoyChakraborty}.

Another top-down approach to robust Orlicz spaces that appears in the literature -- cf.\ \cite{Nutz,MR2842089} and \cite[Example 2.6]{Viability} -- is given by the space
 \begin{align*}\mf L^\Phi(\CP)&:=\big\{X\in L^0(\CP)\,\big|\, \forall\,\P\in\CP\,\exists\,\alpha>0:~\E_\P[\phi_\P(\alpha|X|)]<\infty\big\}\\
 &=\big\{X\in L^0(\CP)\,\big|\, \forall\,\P\in\CP:~\|X\|_{L^{\phi_\P}(\P)}<\infty\big\},\end{align*}
 the ``intersection'' of the individual Orlicz spaces. We discuss this construction in Section~\ref{sec:alternative} and show in Proposition~\ref{prop:equality} and Theorem~\ref{thm:equality} that, in many situations,}
 \[\mf L^\Phi(\CP)=L^\Phi(\CP).\]
 These uniform-boundedness-type results prove the equivalence of both constructions in terms of the extension of the resulting spaces in $L^0(\CP)$.

 \bigskip 
  
\section{Main results and related literature}\label{main results and related}

{For better orientation of the reader, we summarise below the implications of our results for the field of robust finance and discuss further related literature. A more detailed discussion can be found in Section~\ref{sec.applications}.}
 
\subsection{Function spaces in the $G$-Framework}
In order to combine analytic tractability and uncertainty modelled by nondominated priors, a rich strand of literature has pursued the bottom-up construction of robust Orlicz spaces (though not in the generality of our definition above). For $G$-Brownian motion, in particular, closures $\CC^p$ of the space $C_b$ of bounded continuous functions under robust $L^p$-norms $\Norm_{L^p(\CP)}$ for a nonempty set of priors $\CP$ have become a frequent choice for commodity spaces or spaces of contingent claims, cf.\ \cite{MR3816550,MR3048198,hoelzermann2}. We shall cover such closures in Section~\ref{sec:continuous}. The analytic properties of these spaces have been studied in, e.g., \cite{Beissner,Peng}, and a complete stochastic calculus has been developed for them, cf.\ \cite{peng2019nonlinear}. {There are also preliminary studies of their order structure against the backdrop of financial applications, cf.\ \cite{BionNadal} and the discussion in \cite{Cohen}.}
Apart from this, little is known about $\CC^p$ as Banach lattice. We shall fill this gap {and prove in Section~\ref{sec:continuous} that $\CC^p$ is usually separable. As such, it illustrates a general conflict between separability and nondominatedness of the underlying priors -- or robustness of the function space in question -- that the present paper aims to explore. In that regard, we would like to draw attention to two results of our discussion:}

Theorem \ref{thm.main} states that every separable subspace of a robust Orlicz space is order isomorphic to a subspace of a classical $L^1$-space over the same underlying measurable space. As a consequence, its elements are dominated by a single probability measure, and the $\CP$-q.s.\ order collapses to an almost sure order -- even for nondominated sets $\CP$ of priors. We thus identify separability to be a property that leads to a collapse of quasi-sure orders, {thereby providing a generalisation of and a different angle for} the result of \cite{BionNadal}. {Hence, even though separable spaces appear naturally in financial applications (see also Section~\ref{discussion spanning} below), it seems questionable in which sense they are actually ``robust''.}

In a similar spirit, our second main result concerns the Dedekind $\sigma$-completeness of sublattices of robust Orlicz spaces, i.e., the existence of least upper bounds for countable bounded subsets. {Dedekind $\sigma$-completeness is necessary for the existence of ``essential suprema", cf.\ \cite{Cohen}, and therefore derives its importance from the widespread use of essential suprema in financial applications, cf.\ \cite{FS16}. In the context of volatility uncertainty, the existence of essential suprema is studied as ``feasibility of aggregations" in \cite{MR2842089} and the literature based thereon.}
In Theorem \ref{thm.super}, we state that typically {\em at most one} separable Dedekind $\sigma$-complete Banach sublattice exists that generates the $\sigma$-algebra, and if it exists, the family of priors $\CP$ is already dominated with uniformly integrable densities. We thereby qualify that what \textit{prevents} nondominated models from being dominated is the lack of \textit{all} order completeness properties for separable Banach sublattices that generate the $\sigma$-algebra.

{One therefore concludes that the robust closure $\CC^p$ of $C_b$ is too similar to the original space $C_b$ both in terms of its order completeness properties and in terms of dominatedness. 

\subsection{Spanning power of options}\label{discussion spanning}
In Section~\ref{sec:options}, we apply our results to the theory of option spanning. These studies date back to \cite{Ross} on finite sets of future states of the economy, and have since been extended to a multitude of model spaces: arbitrary numbers of future states of the economy \cite{GreenJarrow}, the space of continuous functions over a compact Hausdorff space \cite{BrownRoss}, and $L^p$-spaces or even more general ideals of $L^0$ over a probability space \cite{Options,Nachman}. These contributions either do not assume any probabilistic model or consider a fixed prior. It is therefore natural to address option spanning under potentially nondominated uncertainty in the framework of robust Orlicz spaces. 

The option space $\CH_X$ collecting all portfolios of call and put options written on a limited liability claim $X\in L^\Phi(\CP)$ is separable by construction. If $X$ generates the underlying $\sigma$-algebra, one also considers the norm closure $\CC_X:=\cl(\CH_X)$ of all contingent claims which can be approximated by portfolios of call and put options.
We shall take a reverse stance and ask the question: Which consequences can be drawn from a prescription of the \textit{topological spanning power} of $X$, i.e., a statement about the extent of the closure $\CC_X$ in $L^\Phi(\CP)$? In Corollary~\ref{cor:spanningpower}, we prove a converse to the spanning power results on classical $L^p$-spaces.
If a claim has full topological spanning power under uncertainty in that $\CC_X$ is an ideal of $L^\Phi(\CX)$, then the uncertainty is dominated in a very strong sense.}

\subsection{The Fatou property}
In Section~\ref{sec:regular}, we address the (sequential) \textit{Fatou property} from a reverse perspective. The Fatou property is one of the most prominent phenomena in theoretical mathematical finance and plays a crucial role for the theory of risk measures and the Fundamental Theorem of Asset Pricing. In its classical form, it relates sequential order closedness of convex sets of random variables over a probability space $(\Omega,\CF,\P)$ -- a property usually rather straightforward to verify -- to dual representations of these convex sets in terms of measures, see \cite{Maggisetal} for a detailed discussion. 

In contrast to the aforementioned paper, we contemplate structural \textit{consequences} for a generating sublattice $\CH$ of a robust Orlicz space on which this equivalence holds. In fact, we use our theoretical results to argue that studying the sequential Fatou property only makes sense on \textit{ideals} of robust Orlicz spaces. This provides a theoretical justification for the choice of the model space in \cite{Maggisetal}, and we again conclude that in a nondominated framework, separable spaces are usually insufficient for order-related financial applications.

\subsection{Variational preferences}\label{sec:agents}
{Section~\ref{sec:utility} presents economic foundations for the interest in robust Orlicz spaces. More precisely, we demonstrate that the space $L^\Phi(\CP)$ arises naturally in the context of clouds of agents with \textit{variational preferences}. The latter have been axiomatised by~\cite{Variational} and encompass prominent classes of preferences such as multiple prior preferences and multiplier preferences. 
Economic problems like the viability of markets and multi-utility representations of incomplete preferences, however, sometimes require to consider a whole cloud $\mathcal I$ of (representative) agents operating on, say, bounded random variables $X$ and measuring utility with the function
 \[\inf_{\Q\in\CQ_i}\E_\Q[u_i(X)]+c_i(\Q).\]
Here $u_i$ is a scalar utility function, $\CQ_i$ is a set of probability priors, and $c_i$ is a prior-dependent cost function. 
Note that heterogeneity among the agents may require to consider more than one utility function $u_i$. We shall give conditions under which $L^\Phi(\CP)$ is a canonical maximal model space to study all individual preferences simultaneously. Methodologically, this will be achieved by extending each individual preference relation to a \textit{continuous} preference relation on $L^\Phi(\CP)$.}

 \medskip
 
 {\bf Structure of the Paper:} {The paper unfolds as follows. In Section \ref{sec.orlicz}, we study of the top-down approach to robust Orlicz spaces and discuss its basic properties. We derive equivalent conditions for a robust Orlicz space to coincide with a robust multiplicatively penalised $L^1$-space (Theorem \ref{thm:L1}), and show that every separable subspace of $L^\Phi(\CP)$ is order isomorphic to a sublattice of $L^1(\P^*)$ for a suitable probability measure $\P^*$ (Theorem \ref{thm.main}). Section~\ref{sec:alternative} further provides sufficient conditions for the equality $L^\Phi(\CP)=\mf L^\Phi(\CP)$. 
In Section \ref{sec.completion}, we consider sublattices of $L^\Phi(\CP)$. Theorem \ref{thm.super}, Proposition \ref{prop:equivalences0}, and Proposition \ref{prop:equivalences} provide a set of equivalent conditions for the Dedekind $\sigma$-completeness of sublattices of $L^\Phi(\CP)$. In particular, we prove that separability together with Dedekind $\sigma$-completeness for \textit{any} generating sublattice already implies the dominatedness of the set of priors $\CP$. In special yet relevant cases, we give an explicit description of the dual space of sublattices of $L^\Phi(\CP)$ (Proposition~\ref{lem:representation0}). In Section \ref{sec.applications}, we discuss the applications of these theoretical results to robust finance already anticipated in Section~\ref{main results and related}. 
Section \ref{sec.conclusion} concludes. All proofs are relegated to the Appendices~\ref{app.A}--\ref{app.C}.}

\medskip

{\bf Notation:} For a set $S\neq\emptyset$ and a function $f\colon S\to(-\infty,\infty]$, the \emph{effective domain} of $f$ will be denoted by $\dom(f):=\{s\in S\mid f(s)<\infty\}$. For a normed vector space $(\CH,\Norm)$ we denote by $\CH^*$ its dual space and by $\Norm_{\CH^*}$ the operator norm.

Throughout, we consider a measurable space $(\Omega,\CF)$ and a nonempty set $\CP$ of probability measures $\P$ on $(\Omega,\CF)$. 
The latter give rise to an equivalence relation on the real vector space $\CL^0(\Omega,\CF)$ of all real-valued random variables on $(\Omega,\CF)$:  
\[f\sim g\quad :\iff\quad \forall\,\P\in\CP:~\P(f=g)=1.\]
The quotient space $L^0(\CP):=\CL^0(\Omega,\CF)/\sim$ is the space of all real-valued random variables on $(\Omega,\CF)$ up to $\CP$-quasi-sure ($\CP$-q.s.) equality. The elements $f\colon\Omega\to\R$ in the equivalence class $X\in L^0(\CP)$ are called \emph{representatives}, and are denoted by $f\in X$. Conversely, for $f\in\mathcal L^0(\Omega,\CF)$, $[f]$ denotes the equivalence class in $L^0(\CP)$ generated by $f$. 
 For $X$ and $Y$ in $L^0(\CP)$,
$$X\peq Y\quad:\iff\quad\forall\, f\in X\,\forall \,g\in Y\,\forall\,\P\in\CP:~\P(f\leq g)=1,$$
defines a vector space order on $L^0(\CP)$, the \emph{$\CP$-q.s.\ order} on $L^0(\CP)$, and $(L^0(\CP),\peq)$ is a vector lattice. In fact, for $X,Y\in L^0(\CP)$ and representatives $f\in X, g\in Y$, the formulae
\[X\wedge Y=[f\wedge g]\quad\text{and}\quad X\vee Y=[f\vee g]\]
hold for the minimum and the maximum, respectively. We denote the vector sublattice of all bounded real-valued random variables up to $\CP$-q.s.\ equality by $\Linfty$. The latter is a Banach lattice, when endowed with the norm
\[\|X\|_{L^\infty(\CP)}:=\inf\big\{m>0\, \big|\, X\peq m\ind_\Omega\big\},\quad X\in\Linfty.\]
As usual, $\ca$ denotes the space of all signed measures on $(\Omega,\mathcal F)$ with finite total variation. We denote by $\ca_+$ and $\ca_+^1$ the subset of all finite measures and probability measures, respectively. For $\mu\in \ca$, let $|\mu|$ denote the total variation measure of $\mu$. {We write
\begin{itemize}
\item $\CQ\ll\mf R$ for $\emptyset\neq\CQ$ and $\mf R\subset \ca$ if, for $N\in\CF$, $\sup_{\nu\in \mf R}|\nu|(N)=0$ implies $\sup_{\mu\in \CQ}|\mu|(N)=0$.
\item $\CQ \approx \mf R$ if $\CQ \ll \mf R$ and $\mf R \ll \CQ$. 
\item $\mu \ll \mf R$, $\mf R\ll \mu$, and $\mf R \approx \mu$ if the set $\CQ$ above consists of a sigle measure $\mu\in \ca$, i.e., $\CQ=\{\mu\}$.
\item $\ca(\CP):=\{\mu\in \ca\, |\, \mu\ll \CP\}$ for the space of all countably additive signed measures absolutely continuous w.r.t.\ $\CP$.
\end{itemize}}
The subsets $\ca_+(\CP)$ and $\ca_+^1(\CP)$ are defined  analogously. 
{Note that both $\ca$ and $\ca(\CP)$ are lattices w.r.t.\ the setwise order $\peq_\CF$ which sets $\mu\peq_\CF\nu$ iff $\mu(A)\le \nu(A)$ holds for all $A\in\CF$.
For all $\mu\in \ca_+(\CP)$, $X\in L^0(\CP)_+$, and $f,g\in X$, $\int f\,{\rm d}\mu$ and $\int g\,{\rm d}\mu$ are well-defined and satisfy
\[\int f\,{\rm d}\mu=\int g\,{\rm d}\mu.\]
If $X\in L^0(\CP)$, $f\in X$ arbitrary, $f^\pm$ and $\mu^\pm$ denote the positive and negative part of $f$ and $\mu$, respectively, and if $\int f^+\,{\rm d}\mu^++\int f^-{\rm d}\mu^-$ or $\int f^+\,{\rm d}\mu^-+\int f^-\,{\rm d}\mu^+$ is a real number, then we shall write }
\[\int X\,{\rm d}\mu:=\int f^+\,{\rm d}\mu^++\int f^-{\rm d}\mu^--\int f^+\,{\rm d}\mu^--\int f^-\,{\rm d}\mu^+.\]

\section{Robust Orlicz spaces: definition and first properties}\label{sec.orlicz}

In this section, we introduce robust versions of Orlicz spaces, the main object of interest of this manuscript, and investigate their basic properties. For the theory of classical Orlicz spaces, we refer to \cite[Chapter 2]{Edgar}. An \emph{Orlicz function} is a function $\phi\colon[0,\infty)\to[0,\infty]$ with the following three properties:
\begin{enumerate}[(i)]
\item $\phi$ is lower semicontinuous, nondecreasing, and convex.
\item $\phi(0)=0$.
\item There are $x_0,x_1>0$ with $\phi(x_0)\in [0,\infty)$ and $\phi(x_1)\in (0,\infty]$.\footnote{~This definition precludes triviality of $\phi$, i.e.\ the cases $\phi\equiv 0$ and $\phi=\infty \cdot \ind_{(0,\infty)}$.} 
\end{enumerate}
Throughout this section, we consider a general measurable space $(\Om,\CF)$, a nonempty set of probability measures $\CP$, a family $\Phi=(\phi_\P)_{\P\in\CP}$ of Orlicz functions, and define
\[
\phi_\Max(x):=\sup_{\P\in \CP} \phi_\P(x),\quad \text{for all }x\ge 0.
\]
By definition, $\phi_\Max\colon [0,\infty)\to [0,\infty]$ is a lower-semicontinuous, nondecreasing, and convex function with $\phi_\Max(0)=0$. However, in general, $\phi_\Max$ is not an Orlicz function, since $\phi_\Max(x_0)\in [0,\infty)$ for some $x_0\in (0,\infty)$ cannot be guaranteed.

\subsection{Robust Orlicz spaces and penalised versions of robust $L^p$-spaces}

\begin{definition}\label{def:robustOrlicz}\
 For $X\in L^0(\CP)$, the \emph{($\Phi$-)Luxemburg norm} is defined via
 \begin{equation}\label{eq.luxemburg}
  \|X\|_{L^\Phi(\CP)}:=\inf\big\{ \la>0\, \big|\,\sup_{\P\in \CP} \E_\P\left[\phi_\P(\lambda^{-1}|X|)\right]\leq 1\big\}\in [0,\infty].
 \end{equation}
The \emph{($\Phi$-)robust Orlicz space} is defined by $L^\Phi(\CP):=\dom(\Norm_{L^\Phi(\CP)})$.
\end{definition}
Note that, by definition, for $X\in L^0(\CP)$,
\[
 \|X\|_{L^\Phi(\CP)}=\sup_{\P\in \CP}\|X\|_{L^{\phi_\P}(\P)},
\]
where $\Norm_{L^{\phi_\P}(\P)}$ is given by \eqref{eq.luxemburg} with $\CP=\{\P\}$, for all $\P\in \CP$.

\begin{example}\label{ex.1}
 Let $(\Omega,\CF)$ be a measurable space, $\CP$ a nonempty set of probability priors, and $\phi\colon [0,\infty)\to [0,\infty]$ be an Orlicz function.
 \begin{enumerate}
  \item[(1)] For an arbitrary function $\ga\colon \CP\to [0,\infty)$, consider
  \[
   \phi_\P(x):=\frac{\phi(x)}{1+\ga(\P)},\quad \text{for }x\geq0.
  \]
  This leads to an \emph{additively penalised robust Orlicz space} with Luxemburg norm
  \[
   \|X\|_{L^{\Phi}(\CP)}=\inf\big\{ \la>0\, \big|\,\sup_{\P\in \CP} \E_\P\left[\phi(\lambda^{-1}|X|)\right]-\ga(\P) \leq 1\big\},\quad \text{for }X\in L^0(\CP).
  \]
  For $\phi:=\infty\cdot \ind_{(1,\infty)}$, {we observe $\phi_\P=(1+\ga(\P))^{-1}\phi=\phi$, for all $\P\in\CP$.} Hence, the Luxemburg norm is in that case independent of $\ga$ and given by
  \[
   \|X\|_{L^\Phi(\CP)}=\sup_{\P\in \CP}\|X\|_{L^\infty(\P)}=\|X\|_{L^\infty(\CP)},\quad \text{for }X\in L^0(\CP).
  \]
  Introducing the, up to a sign, convex monetary risk measure
  \[
   \rho(X):=\sup_{\P\in \CP} \E_\P[X]-\ga(\P)\in [0,\infty],\quad \text{for }X\in L^0(\CP)_+,
  \]
  the robust Luxemburg norm can be expressed as
  \[
   \|X\|_{L^\Phi(\CP)}=\inf\left\{ \la>0\, \left|\,\rho\left(\phi(\lambda^{-1}|X|)\right)\leq 1\right.\right\},\quad \text{for }X\in L^0(\CP).
  \]
  \item[(2)] For $\theta\colon \CP\to (0,\infty)$ with $\sup_{\P\in \CP}\theta(\P)<\infty$, we consider
  \[
   \phi_\P(x):=\phi\big(\theta(\P)x\big),\quad\text{for }\P\in \CP\text{ and } x\geq 0.
  \]
  This leads to a \emph{multiplicatively penalised robust Orlicz space} with Luxemburg norm
  \[
   \|X\|_{L^\Phi(\CP)}=\sup_{\P\in \CP}\theta(\P)\|X\|_{L^\phi(\P)},\quad \text{for }X\in L^0(\CP).
  \]
  For $p\in [1,\infty)$ and $\phi(x)=x^p$, $x\ge 0$, we obtain the weighted robust $L^p$-norm
  \[
   \|X\|_{L^\phi(\CP)}=\sup_{\P\in \CP}\theta(\P)\|X\|_{L^p(\P)},\quad \text{for }X\in L^0(\CP),
  \]
  and, for $\phi(x)=\infty\cdot \ind_{(1,\infty)}$, the Luxemburg norm is given by
  \[
   \|X\|_{L^\Phi(\CP)}=\sup_{\P\in \CP}\theta(\P)\|X\|_{L^\infty(\P)},\quad \text{for }X\in L^0(\CP).
  \]
  The resulting spaces will be referred to as \emph{weighted robust $L^p$-spaces}, for $1\leq p\leq \infty$.
 \end{enumerate}
\end{example}

{The next proposition records that, as in the classical case, robust Orlicz spaces are Banach lattices. There and in the following we denote by $\ca\big(L^\Phi(\CP)\big)$} the set of all signed measures $\mu\in \ca(\CP)$ for which each $X\in L^\Phi(\CP)$ is $|\mu|$-integrable and the map
\begin{equation}\label{eq.measfunct}
 L^\Phi(\CP)\to \R, \quad X\mapsto \int X\,{\rm d}|\mu|
\end{equation}
is continuous. Moreover, we set 
\begin{center}$\ca_+\big(L^\Phi(\CP)\big):=\ca\big(L^\Phi(\CP)\big)\cap \ca_+$\quad and\quad$\ca_+^1\big(L^\Phi(\CP)\big):=\ca\big(L^\Phi(\CP)\big)\cap \ca_+^1$.\end{center}

\begin{proposition}\label{prop:Banach}
The following assertions hold:
\begin{itemize}
\item[\tn{(1)}]The space $\big(L^\Phi(\CP),\peq, \Norm_{L^\Phi(\CP)}\big)$ is a Dedekind $\sigma$-complete Banach lattice.
\item[\tn{(2)}]$L^\Phi(\CP)\subset L^0(\CP)$ is an ideal. 
\item[\tn{(3)}]For all $\P\in \CP$, $a_\P>0$, and $b_\P\geq 0$ with $a_\P x-b_\P\le \phi_\P(x)$ for all $x\ge 0$, 
 \begin{equation}\label{eq:normP}\E_\P [|X|]\leq \frac{1+b_\P}{a_\P}\|X\|_{L^\Phi(\CP)},\quad \text{for }X\in L^\Phi(\CP).\end{equation}
\item[\tn{(4)}]$\CP\subset \ca_+^1\big(L^\Phi(\CP)\big)$.
\end{itemize}
\end{proposition}
\begin{remark}\label{rem:measures}
The lattice norm property of $\Norm_{L^\Phi(\CP)}$ leads to two conclusions: (i) For each $\mu\in\ca(L^\Phi(\CP))$, the functional $L^\Phi(\CP)\ni X\mapsto\int X\,{\rm d}\mu$ is continuous. This is due to the fact that the Radon-Nikodym derivative $\frac{{\rm d}\mu}{{\rm d}|\mu|}$ takes values in $[-1,1]$ $|\mu|$-almost everywhere. (ii) $\ca(L^\Phi(\CP))$ is a vector sublattice of $\ca(\CP)$. 
\end{remark}

\begin{example}
 Suppose $\CH\subset L^0(\CP)$ is an ideal which is a Banach lattice when endowed with a norm $\Norm_\CH$. Furthermore assume the norm is completely determined by $\si$-finite measures, i.e., there is a set $\mf D\ll\CP$ of $\sigma$-finite measures such that, for all $X\in \CH$,
 \[
  \|X\|_\CH=\sup_{\mu\in \mf D} \int |X|\,{\rm d}\mu.
 \]
 Then $\CH$ is a robust Orlicz space after a potential modification of $\CP$.
\end{example}

{The following theorem proves that a robust Orlicz space reduces to a weighted robust $L^1$-space if and only if it contains all bounded random variables.}

\begin{theorem}\label{thm:L1}
 The following statements are equivalent:
 \begin{enumerate}[(\tn{1})]
  \item[\tn{(1)}] $\Linfty\subset L^\Phi(\CP)$.
  \item[\tn{(2)}] $\phi_\Max$ is an Orlicz function, i.e., there exists some $x_0\in (0,\infty)$ with $\phi_\Max(x_0)\in [0,\infty)$.
  \item[\tn{(3)}] There exists a nonempty set of probability measures $\CQ\subset \ca_+^1\big(L^\Phi(\CP)\big)$ satisfying $\CP\subset\CQ$ and a bounded weight function $\theta\colon \CQ\to (0,\infty)$ 
  such that 
  \begin{center}$\Norm_{L^\Phi(\CP)}=\sup_{\Q\in \CQ}\theta(\Q)\Norm_{L^1(\Q)}$.\end{center}
That is, $L^\Phi(\CP)$ is a weighted robust $L^1$-space.
\item[\tn{(4)}] There is a constant $\kappa>0$ such that 
 \[
  \|X\|_{L^\Phi(\CP)}\leq \kappa\|X\|_{L^\infty(\CP)},\quad\text{for } X\in \Linfty.
 \]
  \end{enumerate}
\end{theorem}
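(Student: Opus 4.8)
The plan is to prove the chain $(1)\Rightarrow(2)\Rightarrow(1)$, the easy implication $(3)\Rightarrow(1)$, and the substantive implication $(1)\Rightarrow(3)$, which simultaneously furnishes the ``weighted robust $L^1$'' conclusion; the norm estimate falls out of the argument for $(2)\Rightarrow(1)$. For $(1)\Rightarrow(2)$ I would test on the unit: since $\ind_\Omega\in\Linfty\subset L^\Phi(\CP)$, there is $\lambda>0$ with $\sup_{\P\in\CP}\E_\P[\phi_\P(\lambda^{-1})]=\phi_\Max(\lambda^{-1})\le 1<\infty$, so $x_0:=\lambda^{-1}$ witnesses $(2)$. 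For $(2)\Rightarrow(1)$, convexity together with $\phi_\Max(0)=0$ yields $\phi_\Max(tx_0)\le t\,\phi_\Max(x_0)$ for $t\in[0,1]$; choosing $t:=\min\{1,1/\phi_\Max(x_0)\}$ produces $x_1:=tx_0\in(0,x_0]$ with $\phi_\Max(x_1)\le 1$. Then for $X\in\Linfty$ with $c:=\|X\|_{L^\infty(\CP)}>0$ and $\lambda:=c/x_1$ one has $\lambda^{-1}|X|\peq x_1\ind_\Omega$, hence $\phi_\P(\lambda^{-1}|X|)\le\phi_\Max(x_1)\le 1$ and $\|X\|_{L^\Phi(\CP)}\le c/x_1$. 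This proves $(1)$ and, with $\kappa:=1/x_1$, the asserted estimate $\|X\|_{L^\Phi(\CP)}\le\kappa\|X\|_{L^\infty(\CP)}$.

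For $(3)\Rightarrow(1)$: each $\Q\in\CQ$ satisfies $\Q\ll\CP$, so $\|X\|_{L^1(\Q)}\le\|X\|_{L^\infty(\Q)}\le\|X\|_{L^\infty(\CP)}$ for $X\in\Linfty$, whence $\|X\|_{L^\Phi(\CP)}=\sup_{\Q\in\CQ}\theta(\Q)\|X\|_{L^1(\Q)}\le\big(\sup_{\Q\in\CQ}\theta(\Q)\big)\|X\|_{L^\infty(\CP)}<\infty$.

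The heart of the matter is $(1)\Rightarrow(3)$, for which I would establish a dual representation of $\|\cdot\|_{L^\Phi(\CP)}$ by countably additive measures and then normalise. Starting from the defining identity $\|X\|_{L^\Phi(\CP)}=\sup_{\P\in\CP}\|X\|_{L^{\phi_\P}(\P)}$, I treat each term by classical Orlicz duality: on $(\Omega,\CF,\P)$ the Luxemburg norm is a Fatou norm and hence coincides with its second associate norm \cite[Chapter~2]{Edgar}, giving, for $X\ge 0$,
\[
 \|X\|_{L^{\phi_\P}(\P)}=\sup\Big\{\E_\P[Xg]\ \Big|\ g\ge 0,\ \E_\P[|Y|g]\le\|Y\|_{L^{\phi_\P}(\P)}\ \text{for all }Y\in L^{\phi_\P}(\P)\Big\}.
\]
Each admissible density $g$ defines a measure $\mu_g\in\ca_+$ with $\tfrac{{\rm d}\mu_g}{{\rm d}\P}=g$ and $\mu_g\ll\P\ll\CP$; since $\|Y\|_{L^{\phi_\P}(\P)}\le\|Y\|_{L^\Phi(\CP)}$ for every $Y\in L^\Phi(\CP)$, the density constraint upgrades to $\mu_g|Y|\le\|Y\|_{L^\Phi(\CP)}$, so $\mu_g\in\ca_+\big(L^\Phi(\CP)\big)$ with $\|\mu_g\|_{L^\Phi(\CP)^*}\le 1$. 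Taking suprema over $\P$ and over $g$, and using the lattice--Hölder estimate $\mu|X|\le\|\mu\|_{L^\Phi(\CP)^*}\|X\|_{L^\Phi(\CP)}$ in the opposite direction,
\[
 \|X\|_{L^\Phi(\CP)}\le\sup\big\{\mu|X|\ \big|\ \mu\in\ca_+\big(L^\Phi(\CP)\big),\ \|\mu\|_{L^\Phi(\CP)^*}\le 1\big\}\le\|X\|_{L^\Phi(\CP)},
\]
so equality holds throughout (for general $X$ after replacing $X$ by $|X|$). Finally every nonzero $\mu\in\ca_+\big(L^\Phi(\CP)\big)$ is finite, because $\mu(\Omega)=\mu\ind_\Omega\le\|\mu\|_{L^\Phi(\CP)^*}\|\ind_\Omega\|_{L^\Phi(\CP)}$ and $\|\ind_\Omega\|_{L^\Phi(\CP)}<\infty$ by $(1)$; hence $\Q:=\mu/\mu(\Omega)\in\ca_+^1\big(L^\Phi(\CP)\big)$. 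Setting $\CQ:=\ca_+^1\big(L^\Phi(\CP)\big)\supset\CP$ and $\theta(\Q):=1/\|\Q\|_{L^\Phi(\CP)^*}$, the bound $\|\Q\|_{L^\Phi(\CP)^*}\ge\Q\ind_\Omega/\|\ind_\Omega\|_{L^\Phi(\CP)}=1/\|\ind_\Omega\|_{L^\Phi(\CP)}$ makes $\theta$ bounded, and since $\big\{\mu/\|\mu\|_{L^\Phi(\CP)^*}\ \big|\ 0\neq\mu\in\ca_+(L^\Phi(\CP))\big\}$ is precisely the set of norm-one positive measures, the displayed representation rewrites as $\|X\|_{L^\Phi(\CP)}=\sup_{\Q\in\CQ}\theta(\Q)\|X\|_{L^1(\Q)}$, which is $(3)$.

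The main obstacle is the \emph{exactness} in $(1)\Rightarrow(3)$: an arbitrary norming functional of $\|\cdot\|_{L^\Phi(\CP)}$ need not be countably additive, and a direct subgradient argument on the robust norm would only yield an Orlicz-type norm equivalent to the Luxemburg norm (with a spurious constant). Routing the estimate through the single-measure Fatou/associate-space duality is exactly what guarantees that absolutely continuous measures already recover the norm on the nose; I would take care that this classical representation needs no $\Delta_2$-type growth condition (the Luxemburg norm is always Fatou) and that all integrals are insensitive to the choice of representatives, which is ensured by $\mu_g\ll\CP$.
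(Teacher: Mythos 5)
Your proof is correct and follows essentially the same route as the paper's: the decisive implication $(1)\Rightarrow(3)$ rests, exactly as in the paper, on the classical Fatou/second-associate-norm representation of each single-prior Luxemburg norm $\Norm_{L^{\phi_\P}(\P)}$ (the paper invokes \cite[Theorem 2.6.9 \& Corollary 2.6.6]{MeyNie}, with the same caveat you note for the endpoint cases), followed by normalising the norming measures to probability measures with a bounded weight $\theta$. The only deviations are organisational and harmless: you prove $(2)\Rightarrow(1)$ directly with the explicit constant $\kappa=1/x_1$ where the paper runs the cycle $(1)\Rightarrow(2)\Rightarrow(3)\Rightarrow(1)$ and takes $\kappa=\|\ind_\Omega\|_{L^\Phi(\CP)}$, and you choose the canonical set $\CQ=\ca_+^1\big(L^\Phi(\CP)\big)$ with $\theta(\Q)=1/\|\Q\|_{L^\Phi(\CP)^*}$ instead of the paper's per-prior families $\CQ_\P$.
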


\smallskip

\begin{remark}\label{rem:RoyChakraborty}
Theorem~\ref{thm:L1} clarifies the relation of our work to \cite{MR856304,MR1038172}. Leaving technical subtleties aside, two differences are apparent at first sight. Firstly, \cite{MR856304,MR1038172} consider Orlicz spaces of vector-valued functions; we consider scalar-valued functions. Secondly, our concept appears to be more general in the scalar case as it involves a family $\Phi=(\phi_\P)_{\P\in\CP}$ of prior-dependent Orlicz functions instead of a single Orlicz function as in \cite{MR856304,MR1038172}. As weighted robust $L^1$-spaces are definitely covered by \cite{MR856304,MR1038172}, Theorem~\ref{thm:L1}, however, shows that $\Linfty\subset L^\Phi(\CP)$ is necessary and sufficient for our framework to embed in theirs. Note that we impose the condition $\Linfty\subset L^\Phi(\CP)$ throughout Section~\ref{sec.completion}, cf.\ Assumption~\ref{assumption}. 

The identification of Theorem~\ref{thm:L1}(3) typically requires changing to a new set $\CQ$ of priors though. In view of this obstacle and the economic motivation for robust Orlicz spaces outlined in Sections~\ref{sec:agents} and~\ref{sec:utility}, we therefore believe the consideration of a family $\Phi=(\phi_\P)_{\P\in\CP}$ is more natural.
\end{remark}

For $\Q\in \ca_+^1(L^\Phi(\CP))$, we define the \emph{canonical projection} $J_\Q\colon L^\Phi(\CP)\to L^1(\Q)$ via
\[
 J_\Q(X):=\big\{g\in \CL^0(\Omega,\CF)\,\big|\, \exists\,f\in X\colon \Q(f\neq g)=0\big\},\quad \text{for }X\in L^\Phi(\CP).
\]
Since $\Q\in \ca_+^1\big(L^\Phi(\CP)\big)$, $J_\Q$ is well defined, linear, continuous, and a lattice homomorphism, i.e., it preserves the order in that
\begin{center}$J_\Q(X\wedge Y)=J_\Q(X)\wedge J_\Q(Y)$,\quad for $X,Y\in L^\Phi(\CP)$.\end{center}
However, in general, it fails to be a lattice isomorphism onto its image, i.e., it is not injective. {Still, the following result holds: On separable subspaces of $L^\Phi(\CP)$, the $\CP$-q.s.~order collapses to a $\P^*$-a.s.~order for some $\P^*\in \ca_+^1(L^\Phi(\CP))$.}

\begin{theorem}\label{thm.main}
Suppose $\CH$ is a separable subspace of $L^\Phi(\CP)$. Then, there is a probability measure $\P^*\in \ca_+^1\big(L^\Phi(\CP)\big)$ such that $\CH$ is isomorphic to a subspace of $L^1(\P^*)$ via the canonical projection $J_{\P^*}$.\ In particular, the following assertions hold:
\begin{itemize}
\item[(1)]$\P^*$ defines a strictly positive linear functional on $\CH$.
\item[(2)]The $\CP$-q.s.\ order and the $\P^*$-a.s.\ order coincide on $\CH$. 
\item[(3)]If $\CP$ is countably convex, $\P^*$ can be chosen as an element of $\CP$.
\end{itemize}
\end{theorem}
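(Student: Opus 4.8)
The plan is to prove Theorem~\ref{thm.main} by exploiting separability of $\CH$ to produce a single countably additive probability measure $\P^*$ that controls the norm on $\CH$ and whose null sets recover the $\CP$-q.s.\ order. First I would fix a countable dense subset $(X_n)_{n\in\N}$ of $\CH$. For each $n$, the very definition of the robust Luxemburg norm~\eqref{eq:introduction} as a supremum over $\CP$ means I can select a sequence of priors $(\P_{n,k})_{k\in\N}\subset\CP$ that asymptotically attains $\|X_n\|_{L^\Phi(\CP)}$, i.e.\ with $\|X_n\|_{L^{\phi_{\P_{n,k}}}(\P_{n,k})}\to\|X_n\|_{L^\Phi(\CP)}$ as $k\to\infty$. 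Collecting all of these gives a countable family $\{\P_m\}_{m\in\N}\subset\CP$. I would then define
\[
\P^*:=\sum_{m\in\N}2^{-m}\,\P_m,
\]
which is a countably additive probability measure in $\ca_+^1(\CP)$, and by Proposition~\ref{prop:Banach} each $\P_m$, hence $\P^*$ by the sublattice property in Remark~\ref{rem:measures}(ii), lies in $\ca_+^1(L^\Phi(\CP))$.

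The crux is to show that $J_{\P^*}$ is injective on $\CH$, equivalently that $\|X\|_{L^\Phi(\CP)}=0$ whenever $X\in\CH$ and $\P^*(X\neq0)=0$; and more strongly that $J_{\P^*}$ is a lattice isomorphism onto its image. The key point is that $\P^*(N)=0$ forces $\P_m(N)=0$ for every $m$ simultaneously. I would argue that for each fixed $n$ the value $\|X_n\|_{L^\Phi(\CP)}$ is already determined by the chosen priors $\{\P_m\}$, and then transfer this to all of $\CH$ by density: since $J_{\P^*}$ and $\Norm_{L^\Phi(\CP)}$ are both continuous and $(X_n)$ is dense, an inequality of the form $\|X\|_{L^\Phi(\CP)}\le C\,\|X\|_{L^\Phi(\CP)}$ controlled through $\P^*$-null information extends from the dense set to $\CH$. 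Concretely, I expect to establish that for every $X\in\CH$ one has $X=0$ in $L^\Phi(\CP)$ as soon as $J_{\P^*}(X)=0$ in $L^1(\P^*)$, which is precisely injectivity; combined with the fact (stated before the theorem) that $J_{\P^*}$ is always a lattice homomorphism, injectivity upgrades it to a lattice isomorphism onto its image.

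Granting the isomorphism, the three consequences follow quickly. For~(1), strict positivity of $\P^*$ on $\CH$ means $\P^* X>0$ for $0\neq X\in\CH_+$; this is immediate because $X\succ 0$ in $\CH$ together with injectivity of $J_{\P^*}$ forces $J_{\P^*}(X)\neq0$ in $L^1(\P^*)$, so $\E_{\P^*}[X]>0$. For~(2), the $\CP$-q.s.\ order restricted to $\CH$ is pushed forward isomorphically to the $\P^*$-a.s.\ order on $J_{\P^*}(\CH)\subset L^1(\P^*)$, so the two orders agree on $\CH$ in the sense claimed. For~(3), if $\CP$ is countably convex, the countable convex combination $\sum_m 2^{-m}\P_m$ already belongs to $\CP$, so $\P^*\in\CP$ and no enlargement is needed.

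The main obstacle I anticipate is the injectivity/isomorphism step: the delicate part is that the robust norm is a supremum of \emph{different} Luxemburg seminorms $\Norm_{L^{\phi_\P}(\P)}$, so controlling $\|X\|_{L^\Phi(\CP)}$ by information seen through the single measure $\P^*$ is not a purely measure-theoretic null-set argument but must interact with the Orlicz structure. I expect the clean way around this is to reduce, via the dense sequence, to countably many priors and then use that a countable supremum of seminorms each dominated by the corresponding $\P_m$-integral is still ``visible'' to $\P^*$ because $\P^*\approx\{\P_m\}_{m\in\N}$; thus $\P^*(X\neq0)=0$ propagates to $\P_m(X\neq0)=0$ for all $m$, killing each seminorm along the selected sequence, and density closes the gap to all of $\CH$. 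Making the density argument quantitative — ensuring the approximating priors chosen for the $X_n$ genuinely suffice to detect nonvanishing norm of an arbitrary $X\in\CH$ — is where the real care is required.
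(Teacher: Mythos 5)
Your overall route is genuinely different from the paper's and its core detection mechanism is salvageable, but as written there are two concrete gaps. First, the unweighted measure $\P^*=\sum_{m\in\N}2^{-m}\P_m$ is indeed a probability measure in $\ca_+^1(\CP)$, but your claim that it lies in $\ca_+^1\big(L^\Phi(\CP)\big)$ does not follow from what you cite: Remark~\ref{rem:measures}(ii) says $\ca(L^\Phi(\CP))$ is a vector \emph{sublattice} of $\ca(\CP)$, which concerns finite lattice operations, not countable convex combinations. The point is that the dual norms $\|\P_m\|_{L^\Phi(\CP)^*}$ need not be uniformly bounded, since the bound \eqref{eq:normP} degrades with the affine minorants of $\phi_{\P_m}$. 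Concretely, with $\phi_{\P_m}(x)=4^{-m}x$ one can have $X\in L^\Phi(\CP)$ with $\E_{\P_m}[|X|]=2^m$, so that $\E_{\P^*}[|X|]=\sum_m 2^{-m}\E_{\P_m}[|X|]=\infty$; your $\P^*$ then fails to be even a finite functional, let alone continuous. The paper repairs exactly this by weighting, $\mu^*=\sum_n 2^{-n}\min\{1,\|\P_n\|_{\CH^*}^{-1}\}\P_n$, and rescaling to a probability measure; the same fix drops into your construction, since positive weights do not change the null sets (so your injectivity mechanism is untouched), and the rescaled measure is still a countable convex combination of elements of $\CP$, so your argument for assertion (3) survives.

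Second, and more structurally: injectivity of $J_{\P^*}$ on the \emph{subspace} $\CH$ does not by itself deliver assertion (2). From $X\le Y$ $\P^*$-a.s.\ you get $J_{\P^*}\big((Y-X)^-\big)=\big(J_{\P^*}(Y-X)\big)^-=0$, but $(Y-X)^-$ need not belong to $\CH$, which is only assumed to be a subspace -- so "injectivity upgrades to lattice isomorphism onto the image" is not available, and your restricted-norm identity cannot rescue it either, since the seminorms $\Norm_{L^{\phi_{\P_m}}(\P_m)}$ see only moduli and your near-attaining priors were chosen for the $X_n$, not for their positive parts. The repair stays within your framework: lattice operations are norm-continuous in $L^\Phi(\CP)$, so the closed sublattice generated by $\CH$ is still separable; choose the near-norm-attaining priors for a countable dense subset of \emph{that} sublattice. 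Your Lipschitz/density argument (both $\sup_m\Norm_{L^{\phi_{\P_m}}(\P_m)}$ and $\Norm_{L^\Phi(\CP)}$ are $1$-Lipschitz for the ambient norm and agree on the dense set -- this part of your proposal is correct once stated this way, and is essentially the mechanism of Corollary~\ref{cor.main}) then gives injectivity of $J_{\P^*}$ on the whole sublattice, and (2) for $\CH$ follows. For comparison, the paper proceeds instead via weak$^*$-metrizability of the dual unit ball of the separable space $\CH$ to extract a countable family $(\P_n)_{n\in\N}$ detecting nonvanishing of $|X|$ directly; your attainment-plus-density route is more elementary and quantitative, at the price of having to enlarge to the generated sublattice, a step the paper's modulus-detection formulation sidesteps.
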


\begin{corollary}\label{cor.main}
 Assume that one of the equivalent conditions of Theorem \ref{thm:L1} is satisfied. Let $\CQ\subset \ca_+^1\big(L^\Phi(\CP)\big)$ be as described in point \tn{(3)}.
 Then, for every separable subspace $\CH$ of $L^\Phi(\CP)$, there exists a countable set $\CQ_\CH\subset \CQ$ such that
 $$\|X\|_{L^\Phi(\CP)}=\sup_{\Q\in \CQ_\CH}\theta(\Q)\|X\|_{L^1(\Q)},\quad\text{for all }X\in \CH.$$
\end{corollary}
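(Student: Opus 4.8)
The plan is to exploit separability to replace the supremum over the (possibly uncountable) family $\CQ$ by a supremum over a countable subfamily on the separable subspace $\CH$, and then to pass from a dense subset to all of $\CH$ by a continuity argument. By condition~\tn{(3)} of Theorem~\ref{thm:L1}, fix the set $\CQ\subset\ca_+^1\big(L^\Phi(\CP)\big)$ and the weight $\theta\colon\CQ\to(0,\infty)$ with $\sup_{\Q\in\CQ}\theta(\Q)<\infty$ and
\[\|X\|_{L^\Phi(\CP)}=\sup_{\Q\in\CQ}\theta(\Q)\|X\|_{L^1(\Q)},\qquad X\in L^\Phi(\CP).\]
Let $D=\{X_n\mid n\in\N\}$ be a countable dense subset of $\CH$. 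For each $n,k\in\N$ with $\|X_n\|_{L^\Phi(\CP)}>0$, the definition of the supremum furnishes some $\Q_{n,k}\in\CQ$ with
\[\theta(\Q_{n,k})\|X_n\|_{L^1(\Q_{n,k})}>\|X_n\|_{L^\Phi(\CP)}-\tfrac1k.\]
Collecting these choices, I set $\CQ_\CH:=\{\Q_{n,k}\mid n,k\in\N\}$, a countable subset of $\CQ$.

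Next I write $N(X):=\sup_{\Q\in\CQ_\CH}\theta(\Q)\|X\|_{L^1(\Q)}$ for $X\in L^\Phi(\CP)$. As a supremum of the seminorms $X\mapsto\theta(\Q)\|X\|_{L^1(\Q)}$, the map $N$ is itself a seminorm, and since $\CQ_\CH\subset\CQ$ it satisfies $N\le\Norm_{L^\Phi(\CP)}$. In particular $N$ is finite-valued and, by the reverse triangle inequality $|N(X)-N(Y)|\le N(X-Y)\le\|X-Y\|_{L^\Phi(\CP)}$, it is $1$-Lipschitz with respect to $\Norm_{L^\Phi(\CP)}$, hence continuous. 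By construction, letting $k\to\infty$ yields $N(X_n)\ge\|X_n\|_{L^\Phi(\CP)}$, while $N\le\Norm_{L^\Phi(\CP)}$ gives the reverse inequality; thus $N(X_n)=\|X_n\|_{L^\Phi(\CP)}$ for every $n$ (the case $\|X_n\|_{L^\Phi(\CP)}=0$ being trivial).

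It remains to upgrade this equality from the dense set $D$ to all of $\CH$. Both $N$ and $\Norm_{L^\Phi(\CP)}$ are continuous on $\CH$ and agree on $D$, which is dense in $\CH$; therefore they agree on the closure, i.e.\ on all of $\CH$. This is precisely the asserted identity $\|X\|_{L^\Phi(\CP)}=\sup_{\Q\in\CQ_\CH}\theta(\Q)\|X\|_{L^1(\Q)}$ for all $X\in\CH$. The argument is essentially routine; the only point requiring care is the continuity of the restricted seminorm $N$, which is immediate from its domination by the ambient norm, so I do not anticipate a substantive obstacle here.
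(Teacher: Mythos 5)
Your proof is correct, but it takes a genuinely different route from the paper's. The paper works on the dual side: since $\CH$ is separable, the unit ball of $\CH^*$ is weak* compact and metrisable (Banach--Alaoglu plus \cite[Theorem 3.16]{MR1157815}), hence separable, so the family $\{\theta(\Q)\cdot\Q\mid\Q\in\CQ\}$, which lies in that ball, admits a countable weak*-dense subfamily $(\Q_n)_{n\in\N}$, and the norm identity on $\CH$ is read off from this density. You instead work on the primal side: a countable dense set $D\subset\CH$, near-optimal measures $\Q_{n,k}$ for each element of $D$, and then the observation that $N=\sup_{\Q\in\CQ_\CH}\theta(\Q)\Norm_{L^1(\Q)}$ is a seminorm dominated by $\Norm_{L^\Phi(\CP)}$, hence $1$-Lipschitz, so the equality $N=\Norm_{L^\Phi(\CP)}$ propagates from $D$ to $\CH$ by continuity. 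Your version is more elementary (no weak* compactness or metrisability is invoked) and has a small technical advantage: the paper's density argument controls the linear evaluations $X\mapsto\theta(\Q)\E_\Q[X]$ for $X\in\CH$, whereas the claimed identity involves $\theta(\Q)\E_\Q[|X|]$, so strictly speaking it requires $|X|$ to be reachable within the subspace (e.g.\ after enlarging $\CH$ to the separable closed sublattice it generates, or to its modulus closure); your argument never needs $|X|\in\CH$, since you approximate the supremum defining $\|X_n\|_{L^\Phi(\CP)}$ directly, which is available for every element of $L^\Phi(\CP)$ by Theorem~\ref{thm:L1}(3). What the paper's approach buys in exchange is a stronger conclusion that is reusable elsewhere (and mirrors the proof of Theorem~\ref{thm.main}): its countable family approximates each functional $\theta(\Q)\cdot\Q$ pointwise on all of $\CH$, not merely the resulting norm, whereas your family $\CQ_\CH$ is tailored to the norm identity alone --- which is, however, exactly what the corollary asserts.
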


Theorem \ref{thm.main} is akin to results of Nagel, see \cite[Theorem 2.7.8]{MeyNie}. However, these use Kakutani representation and isomorphisms between a multitude of Banach lattices, while our approach does not require a change of the underlying measurable space or topological structure. Similar representation results for general Banach lattices can also be found in \cite[Theorem 1.b.14]{MR540367} and the references provided therein.

\begin{example}\label{ex.doubpen}\
 \begin{enumerate}[(1)]
  \item We consider the setup of Example \ref{ex.1}. Let $\theta\colon\CP\to(0,\infty)$ with $c:=\sup_{\P\in \CP}\theta(\P)<\infty$, $\gamma\colon\CP\to[0,\infty)$, and $\phi$ be a joint Orlicz function. Let
  \[
   \phi_\P(x):=\frac{\phi\big(\theta(\P)x\big)}{1+\gamma(\P)}, \quad \text{for }x\geq 0,
  \]
  corresponding to the case of a \emph{doubly penalised robust Orlicz space}. Then, for $x_0\in (0,\infty)$ with $c x_0\in\dom(\phi)$,
  \[\phi_\Max (x_0)=\sup_{\P\in\CP}\phi_\P(x_0)=\sup_{\P\in\CP}\frac{\phi\big(\theta(\P)x_0\big)}{1+\gamma(\P)}\le \phi(c x_0)<\infty.\]
  By Proposition \ref{thm:L1}, we obtain that $L^\Phi(\CP)$ is a weighted robust $L^1$-space.
  \item {Although this result could, of course, also be obtained in a more direct manner, Theorem~\ref{thm:L1} shows that the classical space $L^\infty(\P^*)$ over a probability space $(\Omega,\CF,\P^*)$ is a robust $L^1$-space.} Indeed, let $\CP$ be the set of all probability  measures $\P$ on $(\Om,\CF)$ that are absolutely continuous with respect to $\P^*$. Consider $\phi_\P(x)=x$ for all $x\geq 0$ and $\P\in \CP$, leading a robust $L^1$-space over $\CP$. Then,
  \[
   \|X\|_{L^\Phi(\CP)}=\|X\|_{L^\infty(\P^*)},\quad \text{for }X\in L^0(\CP)=L^0(\P^*).
  \]
  \item Let $\P^*$ be a probability measure on $(\Om,\CF)$, and consider a convex monetary risk measure $\rho\colon L^\infty(\P^*)\to\R$, which enjoys the Fatou property and satisfies $\rho(0)=0$. The dual representation, up to a sign,
  \[\rho(X)=\sup_{Z\in\dom(\rho^*)\cap L^1(\P^*)}\E[ZX]-\rho^*(Z),\quad \text{for }X\in L^\infty(\P^*),\]
  is a well-known consequence, where $\rho^*$ is the convex conjugate of $\rho$.
  In the situation of Example \ref{ex.1}(1), set
  \begin{align*}\CP&:=\big\{Z\,{\rm d}\P^*\, \big|\,  Z\in\dom(\rho^*)\cap L^1(\P^*)\big\},\\
   \gamma\big(Z\,{\rm d}\P^*\big)&:=\rho^*(Z),\quad \text{for }Z\in \dom(\rho^*)\cap L^1(\P^*),\\
   \phi_\P(x)&:=x,\quad \text{for }x\geq 0\text{ and }\P\in\CP.
  \end{align*}
  Then, $L^\Phi(\CP)$ contains $\Linfty$ as a sublattice. In general, we have $\CP\ll\P^*$, but $\CP\approx\P^*$ may fail without further conditions on $\rho$. We can always define the ``projection'' 
  \[
   \w\rho(Y):=\rho\big(J^{-1}(Y)\big), \quad\text{for }Y\in\Linfty,
  \]
  though, where $J\colon L^\infty(\P^*)\to\Linfty$ is the natural projection. In that case, $L^\Phi(\CP)$ serves as the maximal sensible domain of definition of $\w\rho$. Various aspects of such spaces have been studied in~\cite{KupSvi,Liebrich,Owari,Pichler}. 
 \end{enumerate}
\end{example}

\subsection{An alternative path to robust Orlicz spaces}\label{sec:alternative}

{In this subsection, we focus on a concept of robust Orlicz spaces which does not require the worst-case ansatz in the definition of a robust Luxemburg norm over all models $\P\in\CP$, cf.\ \eqref{eq.luxemburg}. We have already sketched in Section~\ref{sec:agents} the economic application of studying (variational) preferences of a cloud of agents simultaneously. This point of view does not seem to require the modelling assumption of the worst case {\em a priori} to produce the largest commodity space on which the analytic behaviour of each preference relation involved can be captured well. An alternative would be provided by the space
 \begin{align}\begin{split}\label{eq:curlyL}\mf L^\Phi(\CP)&:=\big\{X\in L^0(\CP)\,\big|\, \forall\,\P\in\CP\,\exists\,\alpha>0:~\E_\P[\phi_\P(\alpha|X|)]<\infty\big\}\\
 &=\big\{X\in L^0(\CP)\,\big|\, \forall\,\P\in\CP:~\|X\|_{L^{\phi_\P}(\P)}<\infty\big\}.\end{split}\end{align}
A special case of this space has, e.g., been studied in \cite{Nutz,MR2842089}. The space $\mf L^\Phi(\CP)$ collects minimal agreement among all agents under consideration, that is, they all can attach a well-defined utility to each of the objects in $\mf L^\Phi(\CP)$.} One can show that $\mf L^\Phi(\CP)$ is a vector sublattice of $L^0(\CP)$. Moreover, the inclusion $L^\Phi(\CP)\subset\mf L^\Phi(\CP)$ holds independently of $\Phi$ and can be strict, as the following example demonstrates.

\begin{example}\label{ex.eqorlicz}\
  Fix two constants $0<c<1<C$ and consider the case where $\Om=\R$ is endowed with the Borel $\sigma$-algebra $\CF$, and $\CP$ is given by the set of all probability measures $\P$ which are equivalent to $\P^*:=\CN(0,1)$ with bounded density $c\le\frac{{\rm d}\P}{{\rm d}\P^*}\le C$. Moreover, fix a partition $(\CP_n)_{n\in\N}$ of $\CP$ into nonempty sets. We set
  \[
   \phi_\P(x):=x^n,\quad \text{for } x\geq 0,\, n\in \N,\text{ and }\P\in \CP_n. 
  \]
  Then,
  \[
  \mf L^\Phi(\CP)=\big\{X\in L^0(\P^*)\, \big|\, \forall\,n\in \N:~\E_{\P^*}[|X|^n]<\infty\big\},
  \]
  and thus $U\in\mf L^\Phi(\CP)$ if $U\colon \Om\to \R$ is the identity, i.e., $U\sim\CN(0,1)$ under $\P^*$. However, Stirling's formula implies that, for all $\al>0$,
  \[\sup_{\P\in\CP}\E_\P [\phi_\P(\al|U|)]\ge c\sup_{n\in\N}\E_{\P^*}[\al^n|U|^n]=\infty,\]
  and $U\notin L^\Phi(\CP)$ follows. It is easy to see that $\mf L^\Phi(\CP)$ is a Fr\'echet space, but not a Banach space. 
\end{example}

The next proposition shows that $L^\Phi(\CP)$ can always be seen as a space of type \eqref{eq:curlyL} if $\Linfty\subset L^\Phi(\CP)$, and more can be said if $\CP$ is countably convex. Note that the equivalence of (1) and (2) in Proposition~\ref{prop:equality} would also follow from Theorem~\ref{thm:L1} and \cite[Theorem 4.4]{MR1038172}. We shall give a self-contained proof below.

\begin{proposition}\label{prop:equality}
The following statements are equivalent: 
\begin{itemize}
\item[(1)]$\Linfty\subset L^\Phi(\CP)$.
\item[(2)]There is a set of probability measures $\mf R\subset\ca^1_+(L^\Phi(\CP))$ and a family $\Psi=(\psi_\Q)_{\Q\in\mf R}$ of Orlicz functions such that $\mf R\approx\CP$ and  
 \[L^\Phi(\CP)=\mf L^\Psi(\mf R).\]
 \end{itemize}
 In particular, if $\CP$ is countably convex and there exist constants $(c_\P)_{\P\in \CP}\subset (0,\infty)$ such that
 \begin{equation}\label{eq.thm:equality}
  \phi_\Max(x)\leq \phi_\P(c_\P x),\quad \text{for all }x\geq 0 \text{ and }\P\in \CP,
 \end{equation}
 then \tn{(1)} and \tn{(2)} hold and one can choose $\mf R=\CP$ as well as $\Psi=\Phi$ or $\Psi=(\phi_\Max)_{\P\in\CP}$. 
\end{proposition}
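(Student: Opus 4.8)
My plan is to reduce the whole statement to a single \emph{collapse lemma} and then feed it the weighted $L^1$-representation from Theorem~\ref{thm:L1}.

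\textbf{Collapse Lemma.} For \emph{one} fixed Orlicz function $\psi$ and a countably convex family $\mf R$ of probability measures, the pointwise space $\mf L^\psi(\mf R)$ equals the worst-case Luxemburg space $L^\psi(\mf R)$ (with constant family $\psi$). Only $\mf L^\psi(\mf R)\subseteq L^\psi(\mf R)$ needs an argument. Fix $0\le X\in\mf L^\psi(\mf R)$. The first step is to produce some $k_0\in\N$ with $M:=\sup_{\nu\in\mf R}\E_\nu[\psi(X/k_0)]<\infty$. If this failed at every scale, I would pick $\nu_k\in\mf R$ with $\E_{\nu_k}[\psi(X/k)]>k\,2^k$ and form the countable convex combination $\nu:=\sum_k 2^{-k}\nu_k\in\mf R$; for each $\beta>0$, monotonicity of $\psi$ gives $\E_\nu[\psi(\beta X)]\ge\sum_{k\ge 1/\beta}2^{-k}\E_{\nu_k}[\psi(X/k)]\ge\sum_{k\ge 1/\beta}k=\infty$, contradicting that $X\in\mf L^\psi(\mf R)$ forces a finite scale for $\nu$. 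Once $M<\infty$ is secured, convexity with $\psi(0)=0$ yields $\E_\nu[\psi(X/(k_0 s))]\le M/s$, so $s=\max(M,1)$ pushes the worst-case integral below $1$ and gives $\|X\|_{L^\psi(\mf R)}\le k_0 s<\infty$. \textbf{This gliding-hump construction is the only place countable convexity enters, and I expect it to be the main obstacle}: it is exactly the device that upgrades a pointwise, per-measure finiteness into a uniform (worst-case) one.

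For $(2)\Rightarrow(1)$: take $X\in\Linfty$ with $|X|\le m$ $\CP$-q.s., hence $\mf R$-q.s.\ because $\mf R\approx\CP$. For each $\Q\in\mf R$ pick $x_0>0$ with $\psi_\Q(x_0)<\infty$ and scale by $\alpha=x_0/m$ to obtain $\E_\Q[\psi_\Q(\alpha|X|)]\le\psi_\Q(x_0)<\infty$; thus $\Linfty\subseteq\mf L^\Psi(\mf R)=L^\Phi(\CP)$. This direction is routine.

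For $(1)\Rightarrow(2)$: by Theorem~\ref{thm:L1} write $\|\cdot\|_{L^\Phi(\CP)}=\sup_{\Q\in\CQ}\theta(\Q)\|\cdot\|_{L^1(\Q)}$ with $\CP\subseteq\CQ\subseteq\ca_+^1\big(L^\Phi(\CP)\big)$ and, after dividing by $\sup\theta$, $\theta\le 1$. Fix $\sigma\in\CP\subseteq\ca_+^1\big(L^\Phi(\CP)\big)$ and absorb the weights into probability measures by setting $\tilde\Q:=\theta(\Q)\Q+(1-\theta(\Q))\sigma$, letting $\mf R$ be the countably convex hull of $\{\tilde\Q:\Q\in\CQ\}$ and $\Psi\equiv(\mathrm{id})$. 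Three checks make this work: (i) the mixing bounds $\|\tilde\Q\|_{L^\Phi(\CP)^*}\le 1+\|\sigma\|_{L^\Phi(\CP)^*}$ \emph{uniformly}, a bound that survives countable convex combinations, so $\mf R\subseteq\ca_+^1\big(L^\Phi(\CP)\big)$; (ii) $\tilde\P\ge\theta(\P)\P$ dominates each $\P\in\CP\subseteq\CQ$, giving $\CP\approx\mf R$; and (iii) since $\E_{\tilde\Q}|X|=\theta(\Q)\E_\Q|X|+(1-\theta(\Q))\E_\sigma|X|$ and $\E_\sigma|X|<\infty$ automatically on $L^\Phi(\CP)$, one gets $\sup_{\nu\in\mf R}\E_\nu|X|<\infty$ iff $\sup_\Q\theta(\Q)\E_\Q|X|<\infty$, i.e.\ iff $X\in L^\Phi(\CP)$. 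Because $\mf L^{(\mathrm{id})}(\mf R)=\bigcap_{\nu\in\mf R}L^1(\nu)$, the Collapse Lemma with $\psi(x)=x$ identifies this intersection with $\{X:\sup_{\nu\in\mf R}\E_\nu|X|<\infty\}=L^\Phi(\CP)$, establishing (2).

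Finally, the ``in particular'' statement. Condition~\eqref{eq.thm:equality} makes $\phi_\Max$ an Orlicz function (evaluate $\phi_\Max(y_0/c_\P)\le\phi_\P(y_0)<\infty$), so (1) holds by Theorem~\ref{thm:L1}; the two-sided bound $\phi_\Max(x/c_\P)\le\phi_\P(x)\le\phi_\Max(x)$ then gives $\mf L^\Phi(\CP)=\mf L^{\phi_\Max}(\CP)$ directly. Applying the Collapse Lemma with $\psi=\phi_\Max$ on the countably convex $\CP$ yields $\mf L^{\phi_\Max}(\CP)=L^{\phi_\Max}(\CP)$, which sits inside $L^\Phi(\CP)$ since $\phi_\P\le\phi_\Max$ makes the worst-case $\phi_\Max$-norm dominate. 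The chain $\mf L^\Phi(\CP)=\mf L^{\phi_\Max}(\CP)=L^{\phi_\Max}(\CP)\subseteq L^\Phi(\CP)\subseteq\mf L^\Phi(\CP)$ therefore collapses to equalities, so one may take $\mf R=\CP$ with $\Psi=\Phi$ or $\Psi=(\phi_\Max)_{\P\in\CP}$.
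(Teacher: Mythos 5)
Your proof is correct and takes essentially the same route as the paper: both directions rest on Theorem~\ref{thm:L1} combined with the identical gliding-hump device (a countable convex combination of measures witnessing divergence at every scale), which the paper runs inside the dual unit ball $\mf D\subset\ca_+(L^\Phi(\CP))$ via completeness of $L^\Phi(\CP)^*$ while encoding the weights in the linear Orlicz functions $\psi_\Q(x)=\mu(\Omega)x$, whereas you absorb the weights by mixing with a fixed $\sigma\in\CP$, pass to the countably convex hull, and isolate the hump as a standalone collapse lemma. The ``in particular'' part coincides with the paper's argument (the sandwich $\phi_\P\le\phi_\Max\le\phi_\P(c_\P\,\cdot)$ followed by the hump run inside the countably convex $\CP$), so the differences are purely organisational.
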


While the assumption of countable convexity of the set $\CP$ is typically satisfied in the context of coherent risk measures, it fails for penalisations arising from convex risk measures, i.e., doubly penalised Orlicz spaces as introduced in Example~\ref{ex.doubpen}(1). The following theorem shows that, in this particular case, the assumption of countable convexity of the set $\CP$ can be further relaxed.

\begin{theorem}\label{thm:equality}
 Suppose that $\CP$ is convex. Furthermore, assume that $\Phi$ is doubly penalised with joint Orlicz function $\phi$, multiplicative penalisation $\theta$, and convex additive penalty function $\gamma\colon\CP\to[0,\infty)$ with countably convex lower level sets.
Then,
\[\mf L^\Phi(\CP)=L^\Phi(\CP).\]
\end{theorem}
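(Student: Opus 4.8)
The inclusion $L^\Phi(\CP)\subseteq\mf L^\Phi(\CP)$ always holds, so the plan is to prove the reverse inclusion by contraposition. Suppose $X\in\mf L^\Phi(\CP)$ but $\|X\|_{L^\Phi(\CP)}=\infty$; I will derive a contradiction by producing a single prior $\P^*\in\CP$ that already witnesses $X\notin\mf L^\Phi(\CP)$. The first observation is that, since $\theta$ and $\gamma$ enter $\phi_\P=\phi(\theta(\P)\,\cdot\,)/(1+\gamma(\P))$ only through positive scalars, membership in $\mf L^\Phi(\CP)$ is insensitive to the penalisation: for $\P\in\CP$ one has $\E_\P[\phi_\P(\alpha|X|)]<\infty$ for some $\alpha>0$ if and only if the base quantity $n_\P:=\|X\|_{L^\phi(\P)}$ is finite. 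Thus $X\in\mf L^\Phi(\CP)$ amounts to $n_\P<\infty$ for every $\P\in\CP$, whereas it suffices to find $\P^*\in\CP$ with $\E_{\P^*}[\phi(s|X|)]=\infty$ for \emph{all} $s>0$.

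Next I record the two elementary scaling facts driving the argument, writing $c:=\sup_{\P\in\CP}\theta(\P)<\infty$ and $m_\P:=\inf\{\mu>0\mid\E_\P[\phi(|X|/\mu)]\le 1+\gamma(\P)\}$. A substitution $\mu=\lambda/\theta(\P)$ in the definition of the Luxemburg norm gives $\|X\|_{L^{\phi_\P}(\P)}=\theta(\P)\,m_\P$, so $\sup_{\P\in\CP}\theta(\P)m_\P=\|X\|_{L^\Phi(\CP)}=\infty$, and since $\theta\le c$ one may extract priors $\P_k\in\CP$ with $m_k:=m_{\P_k}\to\infty$. By convexity of $\phi$ with $\phi(0)=0$ the map $t\mapsto\phi(t)/t$ is nondecreasing, whence $\phi(ay)\ge a\,\phi(y)$ for all $a\ge 1$, $y\ge 0$; and by definition of $m_k$ and monotonicity of $\mu\mapsto\E_{\P_k}[\phi(|X|/\mu)]$ one has $\E_{\P_k}[\phi(|X|/\mu)]>1+\gamma(\P_k)\ge 1$ for every $\mu<m_k$.

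The main obstacle is that the extracted priors may have $\gamma(\P_k)\to\infty$, which would push a countable convex combination of them out of $\CP$. I will neutralise the penalty by mixing with a fixed reference prior $\P_0\in\CP$ without destroying the blow-up of the base norm. Put $t_k:=1/(1+\gamma(\P_k))\in(0,1]$ and $\Q_k:=t_k\P_k+(1-t_k)\P_0$, which lies in $\CP$ by convexity. Convexity of $\gamma$ gives $\gamma(\Q_k)\le t_k\gamma(\P_k)+\gamma(\P_0)\le 1+\gamma(\P_0)=:K$, while for every $\mu<m_k$,
\[
 \E_{\Q_k}[\phi(|X|/\mu)]\ge t_k\,\E_{\P_k}[\phi(|X|/\mu)]>t_k\bigl(1+\gamma(\P_k)\bigr)=1,
\]
whence $\|X\|_{L^\phi(\Q_k)}\ge m_k$. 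Thus the priors $\Q_k$ all lie in the single level set $\{\gamma\le K\}\subseteq\CP$, yet their base Orlicz norms still tend to infinity.

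Finally I assemble the counterexample. Passing to a subsequence with $\|X\|_{L^\phi(\Q_k)}\ge 2^k$ and taking $w_k:=2^{-k}$ (renormalised to sum to one), countable convexity of $\{\gamma\le K\}$ guarantees $\P^*:=\sum_k w_k\Q_k\in\{\gamma\le K\}\subseteq\CP$. Setting $\mu_k:=\tfrac12\|X\|_{L^\phi(\Q_k)}\to\infty$, the scaling inequality yields, for fixed $s>0$ and all large $k$ (those with $s\mu_k\ge 1$),
\[
 \E_{\Q_k}[\phi(s|X|)]\ge s\mu_k\,\E_{\Q_k}[\phi(|X|/\mu_k)]> s\mu_k=\tfrac{s}{2}\|X\|_{L^\phi(\Q_k)}.
\]
Summing against the weights and using monotone convergence for the nonnegative integrand, $\E_{\P^*}[\phi(s|X|)]=\sum_k w_k\,\E_{\Q_k}[\phi(s|X|)]=\infty$ for every $s>0$. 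Hence $X\notin L^\phi(\P^*)$ with $\P^*\in\CP$, contradicting $X\in\mf L^\Phi(\CP)$; therefore $\mf L^\Phi(\CP)\subseteq L^\Phi(\CP)$. The only genuinely delicate point is the mixing step: one must lower every $\gamma(\Q_k)$ to a common finite level $K$ while keeping $\|X\|_{L^\phi(\Q_k)}\to\infty$, and the identity $t_k(1+\gamma(\P_k))=1$ is precisely what prevents the base norm from collapsing.
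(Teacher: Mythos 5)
Your proposal is correct and is essentially the paper's own proof: the mixture $\P^*=\sum_k 2^{-k}\bigl(t_k\P_k+(1-t_k)\P_0\bigr)$ with $t_k=\tfrac{1}{1+\gamma(\P_k)}$ is exactly the paper's measure $\Q=\sum_n 2^{-n}\bigl(\tfrac{\gamma(\P_n)}{1+\gamma(\P_n)}\P^*+\tfrac{1}{1+\gamma(\P_n)}\P_n\bigr)$, built with the same penalty-neutralising weights, the same use of convexity of $\gamma$ together with countable convexity of its lower level sets to keep the countable mixture in $\CP$, and the same convexity scaling $\phi(ay)\ge a\,\phi(y)$ for $a\ge 1$ driving the modular blow-up. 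The only presentational differences are that you strip the multiplicative penalty $\theta$ at the outset via $\|X\|_{L^{\phi_\P}(\P)}=\theta(\P)m_\P$ (whereas the paper carries $\theta$ through and restricts the final sum to the infinite index set where $\theta(\Q)\al\ge\theta(\P_n)2^{-n}$), and that you perform the two-point mixing and the countable mixing in two explicit stages rather than in one formula.
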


\begin{example}
{For an additive penalty function as demanded in Theorem~\ref{thm:equality}, consider a convex monetary risk measure $\rho:\mathcal L^\infty(\Omega,\CF)\to\R$ satisfying $\rho(0)=0$ and $\CP:=\dom(\rho^*)\cap \ca_+^1\neq\emptyset$. Then the additive penalty $\gamma(\P):=\rho(\P)$, $\P\in\CP$, is convex and has countably convex lower level sets. However, the set $\CP$ in total} is typically not countably convex, as the choice $\Omega=\N$, $\CF=2^\N$, and 
\[\rho(f)=\sup_{n\in\N}f(n)-2^{2n},\quad\text{for }f\in\mathcal L^\infty(\Omega,\CF),\]
demonstrates: The Dirac measure $\delta_n$ lies in $\dom(\rho^*)\cap \ca^1_+$, $n\in\N$, but $\rho^*(\sum_{n=1}^\infty2^{-n}\delta_n)=\infty$. 
\end{example}
 
\begin{remark}
Assume that, in the situation of Theorem~\ref{thm:equality}, the multiplicative penalty is $\theta\equiv 1$. Then, there are two equally consistent ways to translate convergence in $L^\phi(\P)$ to a robust setting given by the set $\CP$ of priors. One could either declare a net $(X_\alpha)_{\al\in I}$ to be convergent if it (i) converges with respect to each seminorm $\Norm_{L^\phi(\P)}$, for $\P\in\CP$, at equal or comparable speed to the same limit, or (ii) converges to the same limit with respect to each seminorm $\Norm_{L^\phi(\P)}$, for $\P\in\CP$. Convergence (i) is reflected by the norm $\Norm_{L^\Phi(\CP)}$, and the equality of speeds may be relaxed by the additive penalty. Convergence (ii) would lead to the natural choice of a topology on $\mf L^\Phi(\CP)$. Even though $\mf L^\Phi(\CP)=L^\Phi(\CP)$ holds, convergence (ii) might not be normable or even sequential. However, having both options at hand provides a degree of freedom to reflect different economic phenomena on an applied level.
\end{remark}

\section{Generating sublattices of robust Orlicz spaces}\label{sec.completion}

{Recall that a vector lattice $(\CX,\peq)$ is 
\begin{itemize} 
\item Dedekind $\sigma$-complete if each countable subset $\mathcal C\subset\CX$ possessing an order upper bound $y\in\CX$ has a least upper bound (denoted by $\sup\CC$).
\item Dedekind complete if $\sup\CC$ exists for all subsets $\mathcal C\subset\CX$ possessing an order upper bound $y\in\CX$.
\item super Dedekind complete if it is Dedekind complete and suprema are attained by countable subsets. 
\end{itemize}
$\Phi$-robust Orlicz spaces not only have the desirable Banach space property, but also behave reasonably well as vector lattices. Indeed, they are Dedekind $\sigma$-complete ideals in $L^0(\CP)$ with respect to the $\CP$-q.s.\ order. Moreover, using arguments as in \cite[Lemma 8]{surplus}, (super) Dedekind completeness of $L^0(\CP)$ implies (super) Dedekind completeness of $L^\Phi(\CP)$, and the converse implications hold in the situation of Theorem~\ref{thm:L1}.}

In contrast to the top-down construction of $\Phi$-robust Orlicz spaces, one {may contemplate building} such a space bottom-up, a path taken in, e.g., \cite{Beissner,BionNadal,Peng}. Starting with a space of \textit{test random variables}, one may close this test space in the larger ambient space $L^\Phi(\CP)$ with respect to the risk-uncertainty structure as given by $\Norm_{L^\Phi(\CP)}$. Such a procedure leads to proper subspaces in general. The existing literature typically discusses (special cases of) these spaces as Banach spaces without going into further detail on their order-theoretic properties. {The objective of this section is to fill this gap.}

\begin{assumption}\label{assumption}
Throughout this section, we assume that there exists some $x_0\in (0,\infty)$ with $\phi_\Max(x_0)\in [0,\infty)$, or, equivalently, that $\Linfty\subset L^\Phi(\CP)$.
\end{assumption}

In the following, we consider a sublattice $\CH$ of $L^\Phi(\CP)$ containing the equivalence class of the constant function $\ind_\Omega$. We also assume that $\CH$ \textit{generates} $\CF$ in that the $\sigma$-algebra $\sigma(\CL)$ generated by the lattice $\CL:=\{f\in\CL^0(\Omega,\CF)\mid [f]\in\CH\}$ equals $\CF$. 
Note that the latter assumption does not restrict generality and merely simplifies the exposition of our results. They transfer to smaller $\sigma$-algebras otherwise. 
By $\CC$ we denote the $\Norm_{L^\Phi(\CP)}$-closure of $\CH$ in $L^\Phi(\CP)$, i.e.
$$\CC={\rm cl}(\CH).$$
We define the subspaces $\ca(\CH)$ and $\ca(\CC)$ of $\ca(\CP)$ in complete analogy with $\ca(L^\Phi(\CP))$ (cf.\ equation \eqref{eq.measfunct}). Using Remark~\ref{rem:measures}, one can show that, for each $\mu\in\ca(\CH)$, the functional $\CH\ni X\mapsto\int X\,{\rm d}\mu$ is continuous.

\begin{definition}\label{def:sigmaorder}
A possibly nonlinear functional $\ell\colon \CX\to\R$ on a vector lattice $(\CX,\peq)$ is \emph{$\sigma$-order continuous} if it has the following two properties:
\begin{enumerate}[(i)]
\item for all $x,y\in\CX$, the set $\{\ell(z)\mid x\peq z\peq y\}\subset\R$ is bounded.
\item {$\lim_{n\to\infty}|\ell(x_n)|=0$ holds for all sequences $(x_n)_{n\in\N}\subset\CX$ for which there is another sequence $(y_n)_{n\in\N}\subset\CX_+$ satisfying $|x_n|\peq y_n$ for all $n\in\N$, and $y_n\downarrow 0$ (i.e., $y_{n+1}\peq y_n$ for all $n\in \N$, and $\inf_{n\in\N}y_n=0$ in $\CX$).}
\end{enumerate}
\end{definition}

In a first step, we characterise $\sigma$-order continuous linear functionals on $\CC$ and $\CH$. 

\begin{lemma}\label{lem:sigmaorder}
For each $\sigma$-order continuous linear functional $\ell\colon\CH\to\R$ there is a unique signed measure $\mu\in\ca(\CP)$ such that, for all $X\in\CH$, all representatives of $X$ are $|\mu|$-integrable and
\[\ell(X)=\int X{\rm d}\mu.\]
In particular, $\ell$ satisfies $\ell(X)\ge 0$ for all {$X\in\CH$ with $X\succeq 0$ iff }the associated measure $\mu$ lies in $\ca_+(\CP)$. 
\end{lemma}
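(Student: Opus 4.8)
The plan is to establish the correspondence between $\sigma$-order continuous functionals $\ell$ on $\CH$ and signed measures $\mu\in\ca(\CP)$ by first constructing a candidate measure from $\ell$, then verifying it represents $\ell$, and finally addressing uniqueness and the positivity claim. Since $\CH$ is a sublattice containing $\ind_\Omega$ and generating $\CF$, the natural strategy is to define a set function on $\CF$ and show it extends to a signed measure that integrates every element of $\CH$ and reproduces $\ell$.

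First I would produce the measure. The idea is to use the generating lattice $\CL$ to build $\mu$ on $\CF$. For a fixed representative $f$ of some $X\in\CH$ and a Borel set $B\subset\R$, one wants to make sense of $\ell$ evaluated on indicator-type quantities, but indicators need not lie in $\CH$. The cleaner route is to invoke a Daniell–Stone / Riesz-type representation: a $\sigma$-order continuous positive linear functional on a function lattice containing constants and generating the $\sigma$-algebra is represented by a measure. I would therefore first reduce to the positive case by a Jordan-type decomposition of $\ell$ (using property (i), boundedness on order intervals, to define $\ell^+,\ell^-$), then apply the representation theorem to each positive part. Property (ii) — the $\sigma$-order continuity with dominating sequences $y_n\downarrow 0$ — is precisely what upgrades the Daniell functional to a genuinely countably additive measure rather than a finitely additive one, since $\inf_n y_n=0$ in the $\CP$-q.s.\ order corresponds to $\CP$-q.s.\ pointwise decrease to zero, forcing $\ell(x_n)\to 0$.

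Next I would verify that the resulting $\mu$ lies in $\ca(\CP)$, i.e.\ $\mu\ll\CP$. This is where $\sigma$-order continuity is used again: if $N\in\CF$ satisfies $\sup_{\P\in\CP}\P(N)=0$, then $N$ is $\CP$-q.s.\ negligible, so any sequence supported on $N$ is dominated by sequences $y_n\downarrow 0$ in $L^0(\CP)$, whence $\mu$ must assign it no mass; this yields $|\mu|(N)=0$. Integrability of every $X\in\CH$ against $|\mu|$ and the identity $\ell(X)=\mu X$ then follow by monotone approximation within the lattice $\CH$ together with property (i), which guarantees the relevant integrals are finite on order intervals. Uniqueness is standard: two representing measures agree on $\CH$, hence on the generating lattice $\CL$, and a monotone class / $\pi$-$\lambda$ argument using that $\sigma(\CL)=\CF$ forces them to coincide on all of $\CF$.

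The positivity equivalence is the easy direction: if $\mu\in\ca_+(\CP)$ then clearly $\ell(X)=\mu X\ge 0$ for $X\in\CH_+$; conversely, if $\ell\ge 0$ on $\CH_+$, the representation $\ell(X)=\mu X$ together with the fact that $\CL$ contains enough positive functions to test positivity of a measure on the generating class forces $\mu=|\mu|\in\ca_+(\CP)$. I expect the \emph{main obstacle} to be the construction step: converting the abstract $\sigma$-order continuity condition of Definition~\ref{def:sigmaorder} into honest countable additivity of a set function, since $\CH$ need not contain indicators and the order structure is the $\CP$-q.s.\ order rather than an a.s.\ order under a single measure. The key technical point is to match the lattice-theoretic condition $y_n\downarrow 0$ in $\CH$ with pointwise $\CP$-q.s.\ convergence so that a Daniell–Stone representation applies cleanly; once this bridge is in place, the remaining steps are routine.
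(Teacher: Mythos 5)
Your proposal follows essentially the same route as the paper's proof: the paper likewise decomposes $\ell=\ell^+-\ell^-$ using the lattice structure of $\sigma$-order continuous functionals (\cite[Theorem 1.57]{AliBurk2}), lifts the positive parts to the lattice $\CL$ of representatives via $\ell_0:=\ell\circ J$, and applies the Daniell--Stone-type representation of \cite[Theorem 7.8.1]{Bogachev}, with your ``bridge'' handled by the observation that pointwise decrease $f_n\downarrow 0$ of representatives forces $\inf_n[f_n]=0$ in the $\CP$-q.s.\ order, so that $\sigma$-order continuity delivers the Daniell condition, and with $\mu\in\ca_+(\CP)$ obtained simply because $\ind_N\in\CL$ for every $\CP$-polar set $N$. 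One small caution: the implication needed (and used by the paper) runs from pointwise convergence of representatives to order convergence in $\CH$, not the converse ``correspondence'' your wording suggests -- which can fail in sublattices -- but since this is exactly the direction your argument actually invokes, the plan is sound.
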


{We hence identify the space of all $\sigma$-order continuous linear functionals on $\CH$ with a subset $\ca^\sigma(\CH)\subset\ca(\CP)$ and positive $\sigma$-order continuous linear functionals with the set $\ca_+^\sigma(\CH):=\ca^\sigma(\CH)\cap \ca_+$.}

\begin{lemma}\label{lem:representation00}
 Assume that $\Norm_{L^\Phi(\CP)}$ is $\sigma$-order continuous on $\CH$. Then, $$\CH^*=\ca(\CH)=\ca^\sigma(\CH)\cap \CH^*.$$ 
\end{lemma}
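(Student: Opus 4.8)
The plan is to establish the chain of equalities $\CH^*=\ca(\CH)=\ca^\sigma(\CH)\cap\CH^*$ by proving the two inclusions needed for each equality, leaning heavily on the hypothesis that $\Norm_{L^\Phi(\CP)}$ is $\sigma$-order continuous on $\CH$ together with Lemma~\ref{lem:sigmaorder}. First I would observe that $\ca(\CH)\subset\CH^*$ holds essentially by definition: every $\mu\in\ca(\CH)$ induces, via $X\mapsto\mu X$, a continuous linear functional on $\CH$, as already noted in the paragraph preceding Definition~\ref{def:sigmaorder} using Remark~\ref{rem:measures}. So the content lies in the reverse inclusion $\CH^*\subset\ca(\CH)$ and in identifying both with $\ca^\sigma(\CH)\cap\CH^*$.

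The key step is to show that every norm-continuous linear functional $\ell\in\CH^*$ is $\sigma$-order continuous, i.e. $\CH^*\subset\ca^\sigma(\CH)\cap\CH^*$. Here is where the hypothesis is used: if $(x_n)_{n\in\N}$ is dominated by a decreasing sequence $(y_n)\subset\CH_+$ with $\inf_n y_n=0$, then $\sigma$-order continuity of the \emph{norm} gives $\|y_n\|_{L^\Phi(\CP)}\to 0$, hence $\|x_n\|_{L^\Phi(\CP)}\le\|y_n\|_{L^\Phi(\CP)}\to 0$ because $\Norm_{L^\Phi(\CP)}$ is a lattice norm (Proposition~\ref{prop:Banach}); continuity of $\ell$ then forces $|\ell(x_n)|\le\|\ell\|_{\CH^*}\|x_n\|_{L^\Phi(\CP)}\to 0$. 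The boundedness condition (i) of Definition~\ref{def:sigmaorder} follows similarly, since order intervals $\{z\mid x\peq z\peq y\}$ are norm-bounded in a Banach lattice and $\ell$ is norm-bounded. This yields $\CH^*\subset\ca^\sigma(\CH)\cap\CH^*$.

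Next I would feed the functionals thus obtained into Lemma~\ref{lem:sigmaorder}. Given $\ell\in\CH^*$, now known to be $\sigma$-order continuous, Lemma~\ref{lem:sigmaorder} produces a unique $\mu\in\ca(\CP)$ with $\ell(X)=\mu X$ for all $X\in\CH$ and with all representatives $|\mu|$-integrable. Since $\ell$ is moreover norm-continuous, the functional $X\mapsto\mu X$ is continuous on $\CH$, which is precisely the defining requirement for $\mu\in\ca(\CH)$; thus $\ell\in\ca(\CH)$, giving $\CH^*\subset\ca(\CH)$. Combined with the trivial inclusion this establishes $\CH^*=\ca(\CH)$. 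For the last identity, the inclusion $\ca^\sigma(\CH)\cap\CH^*\subset\CH^*$ is immediate, and the reverse inclusion $\CH^*\subset\ca^\sigma(\CH)\cap\CH^*$ was just shown in the second step; hence all three spaces coincide.

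I expect the main obstacle to be the careful handling of condition~(i) in the definition of $\sigma$-order continuity and the verification that the measure supplied by Lemma~\ref{lem:sigmaorder} genuinely lands in $\ca(\CH)$ rather than merely in $\ca(\CP)$ --- that is, checking that the integrability and \emph{continuity} clauses in the definition of $\ca(\CH)$ are both met, with the continuity coming from the norm-continuity of $\ell$ via Remark~\ref{rem:measures}. The rest is a bookkeeping exercise in chaining the inclusions, and the lattice-norm domination estimate $\|x_n\|\le\|y_n\|$ is the single analytic input that makes $\sigma$-order continuity of the norm transfer to $\sigma$-order continuity of each $\ell\in\CH^*$.
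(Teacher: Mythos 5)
Your proposal follows the paper's own route step for step in its main part: the inclusion $\ca(\CH)\subset\CH^*$ is definitional, and the substantive inclusion $\CH^*\subset\ca^\sigma(\CH)$ is obtained exactly as in the paper --- condition (i) of Definition~\ref{def:sigmaorder} from norm-boundedness of order intervals (the paper makes this quantitative via $|Z|\peq X^-+Y^+$ whenever $X\peq Z\peq Y$), and condition (ii) from the lattice-norm estimate $\|x_n\|_{L^\Phi(\CP)}\le\|y_n\|_{L^\Phi(\CP)}$ combined with the hypothesis that $\Norm_{L^\Phi(\CP)}$ is $\sigma$-order continuous on $\CH$, after which Lemma~\ref{lem:sigmaorder} supplies the representing measure. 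Up to this point your argument is correct and identical in substance to the paper's.

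The one step you should repair is the final identification $\ell\in\ca(\CH)$. The defining requirement for $\mu\in\ca(\CH)$ is continuity of the map $X\mapsto|\mu|X$, i.e.\ of the \emph{total variation} functional, not of $X\mapsto\mu X$; Remark~\ref{rem:measures} gives the implication only in the direction from $|\mu|$-continuity to $\mu$-continuity, so norm-continuity of $\ell$ is not, as you assert, ``precisely the defining requirement''. The gap is real but routine to close using structure already present in Lemma~\ref{lem:sigmaorder}: there $\mu=\nu-\eta$, where $\nu$ and $\eta$ represent $\ell^+$ and $\ell^-$, the lattice operations taken in the order dual of $\CH$. Since the norm dual of a normed Riesz space is an ideal of its order dual, $\ell^+,\ell^-\in\CH^*$, and from $|\mu|\le\nu+\eta$ one gets, for all $X\in\CH$,
\[
\bigabs{|\mu|X}\le|\mu|(|X|)\le\ell^+(|X|)+\ell^-(|X|)\le\bigl(\|\ell^+\|_{\CH^*}+\|\ell^-\|_{\CH^*}\bigr)\|X\|_{L^\Phi(\CP)},
\]
which is the missing continuity; the integrability clause is already part of Lemma~\ref{lem:sigmaorder}'s conclusion. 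In fairness, the paper's own proof is equally laconic at this point (its stated conclusion is only $\CH^*\subset\ca(\CP)$), but since you explicitly invoked the wrong continuity clause and cited Remark~\ref{rem:measures} in the wrong direction, you should spell this step out.
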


\begin{proposition}\label{lem:representation0}
The space $\big(\CC,\peq,\Norm_{L^\Phi(\CP)}\big)$ is a Banach lattice and $\ca(\CC)=\ca(\CH)$. If $\CH\subset L^\infty(\CP)$ and $\Norm_{L^\Phi(\CP)}$ is $\sigma$-order continuous on $\CH$, then 
$$\CC^*=\ca(\CC)=\ca(\CH)=\ca^\sigma(\CH)\cap \CC^*.$$
\end{proposition}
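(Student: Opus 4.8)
The plan is to prove Proposition~\ref{lem:representation0} in three stages, following the structure of the statement itself: first the Banach lattice property of $\CC$, then the identity $\ca(\CC)=\ca(\CH)$ under no additional hypotheses, and finally the full chain of equalities under the extra assumptions $\CH\subset L^\infty(\CP)$ and $\sigma$-order continuity of $\Norm_{L^\Phi(\CP)}$ on $\CH$.

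\textbf{Step 1 (Banach lattice structure of $\CC$).} By Proposition~\ref{prop:Banach}, $(L^\Phi(\CP),\peq,\Norm_{L^\Phi(\CP)})$ is a Banach lattice. Since $\CC$ is by definition the $\Norm_{L^\Phi(\CP)}$-closure of the sublattice $\CH$, it is a closed subspace of a Banach space, hence itself a Banach space. It remains to check that $\CC$ is a sublattice, i.e.\ closed under the lattice operations, so that it inherits the structure of a Banach lattice. This follows from the continuity of the lattice operations with respect to the lattice norm: if $X_n,Y_n\in\CH$ with $X_n\to X$ and $Y_n\to Y$ in $\CC$, then $X_n\vee Y_n\to X\vee Y$ because the map $(X,Y)\mapsto X\vee Y$ is norm-continuous on any Banach lattice (a standard consequence of the lattice norm inequality $\bignorm{X\vee Y-X'\vee Y'}_{L^\Phi(\CP)}\le\norm{X-X'}_{L^\Phi(\CP)}+\norm{Y-Y'}_{L^\Phi(\CP)}$). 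Thus $X\vee Y\in\CC$, and likewise for $\wedge$ and $|\cdot|$. Hence $\CC$ is a Banach lattice under the restricted norm and order.

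\textbf{Step 2 (The identity $\ca(\CC)=\ca(\CH)$).} One inclusion is immediate: since $\CH\subset\CC$, any $\mu\in\ca(\CC)$ restricts to an element of $\ca(\CH)$, giving $\ca(\CC)\subset\ca(\CH)$. For the reverse inclusion, take $\mu\in\ca(\CH)$; by definition this means every $X\in\CH$ is $|\mu|$-integrable and $X\mapsto|\mu|X$ is $\Norm_{L^\Phi(\CP)}$-continuous on $\CH$, with the associated functional $X\mapsto\mu X$ also continuous by Remark~\ref{rem:measures}(i). The point is to extend integrability and continuity from $\CH$ to its closure $\CC$. Since $X\mapsto|\mu|X$ is continuous on $\CH$, it admits a unique continuous extension to $\CC$, and I must identify this extension with integration against $|\mu|$ for every $X\in\CC$. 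The natural argument is: given $X\in\CC$ and $X_n\in\CH$ with $\norm{X_n-X}_{L^\Phi(\CP)}\to0$, pass to a subsequence converging $\CP$-q.s.\ (hence $|\mu|$-a.e., using $\mu\ll\CP$), and combine Fatou's lemma with the uniform norm-continuity estimate to control $\int|X-X_n|\,d|\mu|$ and establish $|\mu|$-integrability of $X$ together with $|\mu|X=\lim_n|\mu|X_n$. This yields $\mu\in\ca(\CC)$.

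\textbf{Step 3 (The full chain under the extra hypotheses).} Now assume $\CH\subset L^\infty(\CP)$ and that $\Norm_{L^\Phi(\CP)}$ is $\sigma$-order continuous on $\CH$. The goal is $\CC^*=\ca(\CC)=\ca(\CH)=\ca^\sigma(\CH)\cap\CC^*$; the middle equality is Step 2, so it remains to tie in $\CC^*$ and $\ca^\sigma(\CH)$. By Lemma~\ref{lem:representation00}, $\sigma$-order continuity of the norm on $\CH$ gives $\CH^*=\ca(\CH)=\ca^\sigma(\CH)\cap\CH^*$. The essential observation is that because $\CC$ is the norm-closure of $\CH$, restriction yields an isometric isomorphism $\CC^*\cong\CH^*$: every continuous functional on $\CH$ extends uniquely and norm-preservingly to $\CC$, and conversely restricts. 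Under this identification, $\CC^*$ is identified with $\CH^*=\ca(\CH)=\ca(\CC)$, and the elements of $\CC^*$ are precisely those signed measures in $\ca^\sigma(\CH)$ that define continuous functionals, i.e.\ $\ca^\sigma(\CH)\cap\CC^*$. I would use the hypothesis $\CH\subset L^\infty(\CP)$ to guarantee that such functionals are genuinely given by $\sigma$-order continuous measures via Lemma~\ref{lem:sigmaorder}, and to ensure the integrability needed to realise every element of $\CC^*$ as integration against a measure in $\ca(\CC)$.

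\textbf{Main obstacle.} I expect the crux to lie in Step 2, specifically in promoting $|\mu|$-integrability and the continuity identity from $\CH$ to the closure $\CC$. The subtlety is that norm convergence in $\Norm_{L^\Phi(\CP)}$ does not directly give $L^1(|\mu|)$-convergence for an arbitrary limit $X\in\CC$ without first knowing $X$ is $|\mu|$-integrable; one must bootstrap integrability and the passage to the limit simultaneously. The clean way is to exploit that $X\mapsto|\mu|X$ is a \emph{positive} continuous functional on $\CH$ that extends continuously to $\CC$, combine this extension with a $\CP$-q.s.-convergent (hence $|\mu|$-a.e.-convergent) subsequence, and invoke Fatou together with the continuity bound to sandwich $\int|X|\,d|\mu|$ and conclude. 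The remaining steps are comparatively routine applications of the already-established Lemmas~\ref{lem:sigmaorder} and~\ref{lem:representation00} and the standard duality between a Banach space and the norm-closure of a dense subspace.
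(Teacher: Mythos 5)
Your proposal is correct, and your Steps 1 and 3 coincide with the paper's proof: the Banach lattice property comes from the fact that the closure of a sublattice is a sublattice (the paper cites \cite[Proposition 1.2.3(ii)]{MeyNie}; your direct argument via norm-continuity of the lattice operations proves the same fact), and the final chain of equalities follows from the identification $\CC^*=\CH^*$ by density together with Lemma~\ref{lem:representation00}, exactly as in the paper. The one genuine divergence is in Step 2, the inclusion $\ca(\CH)\subset\ca(\CC)$, which is indeed the crux. The paper splits this step along the order structure: it first identifies the continuous extension $\ell$ of $X\mapsto|\mu|X$ with $|\mu|$ on $\CC\cap\Linfty$, using uniformly bounded approximants from $\CH\cap\Linfty$ together with dominated convergence, and then reaches a general $X\in\CC$ via the monotone truncations $|X|\wedge n\ind_\Omega$ (exploiting $\ind_\Omega\in\CH$), which yields the quantitative bound $|\mu|(|X|)\le\|\ell\|_{\CC^*}\|X\|_{L^\Phi(\CP)}$ of~\eqref{eq:approximation} --- an estimate the paper reuses twice later, in the proof of Theorem~\ref{thm.super}. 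You instead treat an arbitrary $X\in\CC$ at once: extract a $\CP$-q.s.\ convergent subsequence (the Borel--Cantelli mechanism from the proof of Proposition~\ref{prop:Banach}), then combine $\mu\ll\CP$, Fatou, and the Cauchy property to get $\int|X-X_{n_k}|\,{\rm d}|\mu|\le\liminf_j|\mu|\big(|X_{n_j}-X_{n_k}|\big)\le C\sup_{j\ge k}\|X_{n_j}-X_{n_k}\|_{L^\Phi(\CP)}\to 0$, which simultaneously bootstraps $|\mu|$-integrability of $X$ and the identity $|\mu|X=\ell(X)$. Your route avoids the bounded/unbounded case split and proves slightly more ($L^1(|\mu|)$-convergence of the approximating sequence), whereas the paper's route buys the reusable norm estimate~\eqref{eq:approximation}. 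Both are sound; when writing Step 2 out in full, do record that the q.s.\ limit of the extracted subsequence is a representative of $X$, and that the bound $|\mu|(|Z|)\le C\|Z\|_{L^\Phi(\CP)}$ is applied to $Z=X_{n_j}-X_{n_k}$, which is legitimate because $\CH$ is a sublattice, so $|X_{n_j}-X_{n_k}|\in\CH$.
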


Throughout the remainder of this section, the closure $\cl\big(L^\infty(\CP)\big)$ of $\Linfty$ plays a fundamental role. The following lemma is a slight generalisation of \cite[Proposition 18]{Peng} and provides an explicit description of the closure of $\Linfty$ in our setup.

\begin{lemma}\label{lem.Linfty}
For $X\in L^\Phi(\CP)$, the following statements are equivalent:
\begin{enumerate}
\item[\tn{(1)}]$X\in \cl\big(L^\infty(\CP)\big)$.
\item[\tn{(2)}] For all $\al>0$, $\lim_{n\to \infty}\sup_{\P\in \CP}\E_\P\big[\phi_\P(\al |X|)\mbf 1_{\{|X|> n\}}\big]= 0.$
\item[\tn{(3)}]
$\lim_{n\to \infty}\big\|X\ind_{\{|X|> n\}}\big\|_{L^\Phi(\CP)}=0$.
\end{enumerate}
\end{lemma}

For the remaining results of this section, we emphasise that, if we view $\CH$ or $\CC$ as spaces of \textit{measurable functions}, two properties should not be far fetched:
\begin{enumerate}
 \item[(i)] Dedekind $\sigma$-completeness,
 \item[(ii)] many positive functionals which are integrals with respect to a measure are $\sigma$-order continuous.\footnote{~Tellingly, the early literature on vector lattices refers to $\sigma$-order continuous linear functionals as ``integrals".}
\end{enumerate}
The following theorem shows that, if $\CP$ is nondominated, the Banach lattice $\CC$ cannot be  separable \textit{and} simultaneously have the mild order completeness property of Dedekind $\sigma$-completeness. The proof adopts general ideas from the theory of Banach lattices, see, for example, \cite[Theorem 1.b.14]{MR540367}.

\begin{theorem}\label{thm.super}
Suppose that the Banach lattice $\big(\CC,\peq,\Norm_{L^\Phi(\CP)}\big)$ is separable, and let $\P^*$ be a probability measure as in Theorem~\ref{thm.main}. Then, the following statements are equivalent:
\begin{itemize}
  \item[(1)] $\CC$ is Dedekind $\sigma$-complete.
  \item[(2)] $\CC$ is super Dedekind complete.
  \item[(3)] $\CC=\cl\big(L^\infty(\CP)\big)$. 
  \item[(4)] $\CC$ is an ideal in $L^\Phi(\CP)$.
   \item[(5)]$\CC^*=\ca(\CC)=\ca(L^\Phi(\CP))\approx\P^*$ and the unit ball therein is weakly compact in $L^1(\P^*)$.
\end{itemize}
Moreover, they imply both of the following assertions:
\begin{itemize}
  \item[(6)]$\CP\approx \P^*$.
  \item[(7)]If, additionally, 
\begin{equation}\label{eq:growth}\inf_{\P\in \CP}\phi_\P(x_0)\in (0,\infty]\quad\text{for some }x_0\in (0,\infty),\end{equation}
the set $\big\{\frac{{\rm d}\P}{{\rm d}\P^*}\, \big|\, \P\in \CP\big\}$ of densities of priors in $\CP$ is  uniformly $\P^*$-integrable.
 \end{itemize}
 \end{theorem}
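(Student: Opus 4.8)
The plan is to exploit the separability of $\CC$ together with Theorem~\ref{thm.main} as a structural backbone, and then treat the cyclic chain of equivalences and the two final implications in a carefully ordered sequence. By Theorem~\ref{thm.main}, separability of $\CC$ yields a probability measure $\P^*\in\ca_+^1(L^\Phi(\CP))$ such that the canonical projection $J_{\P^*}$ is an order-isomorphism of $\CC$ onto a sublattice of $L^1(\P^*)$, and the $\CP$-q.s.\ order on $\CC$ collapses to the $\P^*$-a.s.\ order. This is the key device: it lets me transport order-theoretic statements about $\CC$ into statements about a concrete sublattice of a classical $L^1$-space, where Dedekind completeness, ideals, and weak compactness are well understood via the Dunford--Pettis theorem and the standard duality $L^1(\P^*)^*=L^\infty(\P^*)$.

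First I would establish the cycle $(3)\Rightarrow(4)\Rightarrow(1)\Rightarrow(2)\Rightarrow(3)$, or a convenient reordering thereof. The implication $(4)\Rightarrow(1)$ is immediate since every ideal in the Dedekind $\sigma$-complete space $L^\Phi(\CP)$ (Proposition~\ref{prop:Banach}) that is norm-closed is itself Dedekind $\sigma$-complete. For $(1)\Rightarrow(2)$, I would use separability to upgrade Dedekind $\sigma$-completeness to super Dedekind completeness, essentially because in a separable Banach lattice order-bounded disjoint families must be countable, so $\sigma$-completeness suffices for arbitrary suprema. The implication $(3)\Rightarrow(4)$ follows because $\cl(L^\infty(\CP))$ is always an ideal in $L^\Phi(\CP)$, which can be read off Lemma~\ref{lem.Linfty}: the defining tail condition $\lim_n\|X\ind_{\{|X|>n\}}\|_{L^\Phi(\CP)}=0$ is inherited by any $Y$ with $|Y|\peq|X|$. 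The genuinely substantive step is $(2)\Rightarrow(3)$, i.e.\ showing that super Dedekind completeness forces $\CC$ down to the closure of the bounded functions; here I would argue that if $\CC$ contained an element $X$ with nontrivial tail mass in the sense of Lemma~\ref{lem.Linfty}, the truncations $X\wedge n\ind_\Omega$ would form an increasing order-bounded sequence in $\CC$ whose supremum (which exists by Dedekind $\sigma$-completeness and equals $X$) fails to be a norm-limit, contradicting the fact that in a super Dedekind complete Banach lattice with separability order convergence of such sequences is compatible with norm closure.

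Next I would handle $(5)$ and its equivalence to the rest. Assuming the cyclic conditions hold, $\CC=\cl(L^\infty(\CP))$ and $\CH\subset L^\infty(\CP)$, so Proposition~\ref{lem:representation0} (once $\sigma$-order continuity of the norm on $\CH$ is verified, which follows from the $(3)$-branch) gives $\CC^*=\ca(\CC)=\ca(L^\Phi(\CP))$. The identification $\ca(\CC)\approx\P^*$ and weak compactness of the unit ball in $L^1(\P^*)$ is where I expect the main work: via $J_{\P^*}$ the dual pairing transports to $L^1(\P^*)$, and weak compactness of the dual unit ball is equivalent, by the Dunford--Pettis theorem, to uniform integrability of the corresponding densities, which in turn is what couples $(5)$ to the reflexive-type behaviour that pins down $\ca(\CC)\approx\P^*$. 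Conversely $(5)\Rightarrow(3)$ would follow because weak compactness of the dual ball forces the norm to be order continuous on all of $\CC$, and an order-continuous norm on a space generated by $L^\infty(\CP)$ restricts $\CC$ to $\cl(L^\infty(\CP))$.

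Finally, for the two standalone implications $(6)$ and $(7)$: assertion $(6)$, namely $\CP\approx\P^*$, follows from $(5)$ together with the fact that $\CP\subset\ca_+^1(L^\Phi(\CP))=\ca(\CC)$ and the latter is $\approx\P^*$, while $\CP\ll\P^*$ comes from $\P^*$ being built out of the priors in Theorem~\ref{thm.main} and $\P^*\ll\CP$ from strict positivity of $\P^*$ as a functional on $\CH$, which generates $\CF$. For assertion $(7)$, under the additional growth condition~\eqref{eq:growth} I would show that the densities $\{\tfrac{\mathrm d\P}{\mathrm d\P^*}\mid\P\in\CP\}$ form a uniformly integrable family: the lower bound $\inf_\P\phi_\P(x_0)>0$ controls each prior's mass on the set where $|X|$ is large uniformly in $\P$, and combining this with the weak compactness from $(5)$ and the Dunford--Pettis criterion yields uniform integrability. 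The main obstacle throughout is the step $(2)\Rightarrow(3)$ and the weak-compactness half of $(5)$: both require translating abstract order-completeness hypotheses into the concrete uniform-integrability/Dunford--Pettis language on $L^1(\P^*)$ through the isomorphism $J_{\P^*}$, and ensuring that the collapse of the order furnished by Theorem~\ref{thm.main} is compatible with the norm structure so that order-theoretic suprema and norm limits coincide.
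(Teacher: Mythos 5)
Your overall architecture (route everything through Theorem~\ref{thm.main}, cycle the equivalences, use Dunford--Pettis for (5) and (7)) matches the paper, and some pieces are sound: the implications $(3)\Rightarrow(4)\Rightarrow(1)$ are correct, and your argument for $(1)\Leftrightarrow(2)$ via countability of order-bounded disjoint systems (disjoint positive elements of norm $\ge\eps$ are pairwise $\eps$-separated, so separability caps their number) is a legitimate alternative to the paper's route, which instead uses the strictly positive functional $\P^*$ from Theorem~\ref{thm.main}. However, there is a genuine gap at the heart of the theorem: your $(2)\Rightarrow(3)$ argument only proves the inclusion $\CC\subset\cl\big(L^\infty(\CP)\big)$. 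Truncating $X\in\CC$ and using $\sigma$-order continuity of the norm (which is the content of \cite[Corollary 4.52]{AliBurk2} for separable Dedekind $\sigma$-complete Banach lattices --- your phrase ``order convergence is compatible with norm closure'' is exactly this and should be cited, not asserted) shows every element of $\CC$ has vanishing tail norm, hence lies in $\cl(L^\infty(\CP))$ by Lemma~\ref{lem.Linfty}. But equality in (3) also requires $L^\infty(\CP)\subset\CC$, and this is the substantive direction: $\CC$ is the closure of an \emph{arbitrary} generating sublattice $\CH$ (e.g.\ of bounded continuous functions) and \emph{a priori} contains no nontrivial indicator functions. The paper obtains this inclusion by first deducing $\CC^*=\ca(\CC)=\ca^\sigma(\CC)$ from $\sigma$-order continuity of the norm, so that in particular $\CP\subset\ca_+^\sigma(\CC)$, and then invoking the Monotone Class Lemma~\ref{lem:monclass} to build indicators $\ind_{\{X>c\}}$ inside $\CC$ from suprema of the truncates $k(X-c\ind_\Omega)^+\wedge\ind_\Omega$. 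Nothing in your proposal supplies this step, and without it (3), (5), and (6) are not established.

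The same omission infects your treatment of (5): you assume ``$\CH\subset L^\infty(\CP)$'' in order to apply Proposition~\ref{lem:representation0}, but that is not a hypothesis of Theorem~\ref{thm.super}; the paper instead proves $\ca(\CC)\subset\ca(L^\Phi(\CP))$ by extending functionals and reusing the truncation estimate \eqref{eq:approximation}. Likewise, in $(5)\Rightarrow(3)$ you argue that order continuity ``on a space generated by $L^\infty(\CP)$'' pins down $\CC$ --- again presupposing $L^\infty(\CP)\subset\CC$, which must be \emph{proved} (the paper does so via a Dini-type argument: elements of $L^\infty(\CP)$ act as continuous affine functions on the weakly compact set $\mathcal Z\subset L^1(\P^*)$ of unit-ball densities, pointwise monotone convergence is then uniform, yielding $\sigma$-order continuity on $L^\infty(\CP)$, after which a monotone-class argument and a Hahn--Banach separation complete the two inclusions). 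Finally, the weak compactness claim in (5) does not follow merely from ``transporting the pairing'': one must verify uniform absolute continuity of the unit ball of $\ca(\CC)$ with respect to $\P^*$, which the paper does via a maximizing sequence of events $(A_n)_{n\in\N}$ with $\P^*(A_n)\le 2^{-n}$ and $\ind_{A_n}\to 0$ in order, exploiting $\ind_{A_n}\in\CC$ (again requiring (3)) and norm $\sigma$-order continuity before applying \cite[Theorem 4.7.25]{Bogachev1}. Your sketch of (7) is essentially the paper's argument (uniform affine minorant from \eqref{eq:growth}, then \eqref{eq:normP} places the scaled priors in the weakly compact unit ball), but it, too, rests on the unproven half of (5).
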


We thus see that, in typical situations encountered in the literature, all order completeness properties agree, and their validity usually implies dominatedness of the underlying set of priors in a particularly strong form.
Although separability is a desirable property from an analytic point of view, we have hereby shown that it has very strong implications for robust spaces. One may wonder what happens if one drops this assumption. 
We start with the following version of the Monotone Class Theorem.

\begin{lemma}\label{lem:monclass}
 Assume that $\CH$ is Dedekind $\sigma$-complete and $\CP\approx \ca_+^\sigma(\CH)$. Then, $L^\infty(\CP)\subset \CH$.
 \end{lemma}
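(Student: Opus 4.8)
The plan is to prove a functional monotone class theorem tailored to the order structure of $\CH$. The decisive preliminary step---and the main obstacle---is to show that $\CH$ is closed under bounded increasing $\CP$-q.s.\ limits, i.e.\ that the Dedekind $\sigma$-complete supremum formed \emph{inside} $\CH$ agrees with the concrete q.s.\ (pointwise) supremum in $L^0(\CP)$. I state this as the \emph{key claim}: if $(X_n)_{n\in\N}\subset\CH$ is increasing and bounded above in $\CH$, then its supremum $X:=\sup_n X_n$ in $\CH$ (which exists by Dedekind $\sigma$-completeness) coincides with its supremum $Y:=\sup_n X_n$ in $L^\Phi(\CP)$ (which exists since $L^\Phi(\CP)$ is Dedekind $\sigma$-complete by Proposition~\ref{prop:Banach}, and which is precisely the q.s.\ limit of the $X_n$); in particular this q.s.\ limit then lies in $\CH$.

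To establish the key claim I would argue by contradiction. One automatically has $X_n\peq Y\peq X$, so $Z:=X-Y\succeq 0$. Suppose $Z\neq 0$ in $L^0(\CP)$, so that $\{Z>0\}$ is not $\CP$-null; by the hypothesis $\CP\approx\ca_+^\sigma(\CH)$ it is then not $\ca_+^\sigma(\CH)$-null, and hence there is $\mu\in\ca_+^\sigma(\CH)$ with $\mu(\{Z>0\})>0$, whence $\mu Z>0$. Let $\ell(\cdot)=\mu(\cdot)$ be the associated $\sigma$-order continuous functional furnished by Lemma~\ref{lem:sigmaorder}. On the one hand, $X-X_n\downarrow 0$ in $\CH$, so applying $\sigma$-order continuity (Definition~\ref{def:sigmaorder} with $y_n:=X-X_n$) yields $\mu X_n\to\mu X$. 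On the other hand, $X_n\uparrow Y$ q.s.\ with $X_1\peq Y\peq X$ and both $X_1,X$ being $\mu$-integrable (Lemma~\ref{lem:sigmaorder}), so $Y$ is $\mu$-integrable and monotone convergence for the finite measure $\mu$ gives $\mu X_n\to\mu Y$. Therefore $\mu Z=\mu X-\mu Y=0$, contradicting $\mu Z>0$. Thus $Z=0$ and $X=Y\in\CH$.

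With the key claim in hand, the monotone class argument is routine. Set $\CD:=\{A\in\CF\mid[\ind_A]\in\CH\}$. For $f\in\CL$ and $c\in\R$ the functions $g_n:=\big(n(f-c)\big)^+\wedge 1$ lie in $\CH$ (obtained from $f$ and the constants $0,1$ by vector and lattice operations) and increase to $\ind_{\{f>c\}}$, bounded above by $\ind_\Omega\in\CH$; by the key claim $\{f>c\}\in\CD$. The family $\Pi:=\{\{f>c\}\mid f\in\CL,\,c\in\R\}$ is a $\pi$-system, since $\{f>c\}\cap\{g>d\}=\{(f-c)\wedge(g-d)>0\}$ with $(f-c)\wedge(g-d)\in\CL$, and it generates $\sigma(\CL)=\CF$. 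Moreover $\CD$ is a Dynkin system: $\Omega\in\CD$; it is closed under proper differences because $\ind_{B\setminus A}=\ind_B-\ind_A\in\CH$ for $A\subseteq B$; and it is closed under increasing unions by the key claim. Dynkin's $\pi$-$\lambda$ theorem then yields $\CD=\CF$.

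Finally, since $\CH$ is a vector space containing $[\ind_A]$ for every $A\in\CF$, it contains all simple functions. Any $W\in L^\infty(\CP)_+$ is the q.s.\ limit of an increasing, uniformly bounded sequence of simple functions, so $W\in\CH$ by the key claim, and decomposing a general $W\in L^\infty(\CP)$ into $W^+-W^-$ gives $L^\infty(\CP)\subset\CH$. I expect the only genuinely delicate point to be the key claim: it is exactly there that all three hypotheses are consumed---Dedekind $\sigma$-completeness to produce the supremum inside $\CH$, and $\sigma$-order continuity together with $\CP\approx\ca_+^\sigma(\CH)$ to reconcile the abstract supremum with the pointwise one---whereas without this reconciliation the $\CH$-supremum could strictly dominate the q.s.\ limit and the argument would break down.
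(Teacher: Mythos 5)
Your proof is correct and follows essentially the same route as the paper: both arguments reconcile the abstract Dedekind supremum formed in $\CH$ with the concrete $\CP$-q.s.\ supremum by testing against $\sigma$-order continuous functionals and invoking $\CP\approx\ca_+^\sigma(\CH)$, and then run the identical $\pi$-$\lambda$/simple-function bootstrap starting from the sequences $\big(n(f-c)\big)^+\wedge 1\uparrow\ind_{\{f>c\}}$. The only difference is organisational: you isolate the reconciliation as one reusable key claim proved by a separating-functional contradiction, whereas the paper carries out the same mechanism inline for the indicator approximations and again, implicitly, in its $\lambda$-system and simple-function steps.
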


The next proposition now shows that the only (generating) sublattice of $\Linfty$ satisfying the requirements (i) and (ii) above is $\Linfty$ itself.

\begin{proposition}\label{prop:equivalences0}
  The following statements are equivalent:
 \begin{itemize}
  \item[(1)] $\CH$ is Dedekind $\sigma$-complete and $\CP\approx \ca_+^\sigma(\CH)$.
  \item[(2)] $\CH$ is Dedekind $\sigma$-complete and $\ca(\CH)=\ca^\sigma(\CH)\cap \CH^*$.
  \item[(3)] $\CH$ is an ideal in $L^\Phi(\CP)$.
  \end{itemize}
  If $\CH\subset L^\infty(\CP)$, \tn{(1)--(3)} are furthermore equivalent to:
  \begin{itemize}
  \item[(4)] $\CH=L^\infty(\CP)$.
 \end{itemize}
 \end{proposition}
  
Considering $\CC$ instead of $\CH$ does not change the picture, since the closure of any (generating) sublattice of $\Linfty$ satisfying (i) and (ii) leads to the same Banach lattice, the closure of $\Linfty$.
 
\begin{proposition}\label{prop:equivalences}
 The following statements are equivalent:
 \begin{itemize}
  \item[(1)] $\CC$ is Dedekind $\sigma$-complete and $\CP\approx \ca_+^\sigma(\CC)$.
  \item[(2)] $\CC$ is Dedekind $\sigma$-complete and $\ca(\CC)=\ca^\sigma(\CC)$.
  \item[(3)] $\CC$ is an ideal in $L^\Phi(\CP)$.
  \end{itemize}
  If $\CH\subset L^\infty(\CP)$, \tn{(1)--(3)} are furthermore equivalent to
  \begin{itemize}
  \item[(4)]$\CC=\cl\big(L^\infty(\CP)\big)$.
   \end{itemize}
 \end{proposition}

\section{Applications}\label{sec.applications}

This section is devoted to {a more detailed discussion of the financial and economic implications of our theoretical results already mentioned in Section~\ref{main results and related}. }

\subsection{Closures of continuous functions}\label{sec:continuous}

Prominent sublattices of $L^\Phi(\CP)$ appearing in the literature -- at least for special cases of $\Phi$ -- are $\Norm_{L^\Phi(\CP)}$-closures of sets of continuous functions on a separable metrisable space $\Omega$. While {\cite{BionNadal} considers a general lattice of bounded continuous functions generating the Borel-$\sigma$-algebra and containing $\ind_\Omega$, other contributions consider} bounded Lipschitz functions, or bounded cylindrical Lipschitz functions, respectively, cf.\ \cite{Peng,HuWang}. The usual minimal assumption on $\CP$ is tightness. Sometimes one imposes that $\CP$ is convex and {weakly closed/compact}, cf.\ \cite{Beissner} and \cite[Section 3.3]{Viability}. In that case, $\CP$ has the stronger property of being countably  convex.

Throughout this subsection, we assume that $\Omega$ is a separable and metrisable topological space endowed with the Borel $\sigma$-algebra $\CF$. Let $C_b$ be the space of bounded {\em continuous} functions on $\Omega$, and let $\CH\subset C_b$ be a {generating} lattice containing $\ind_\Omega$. {We shall again work under Assumption~\ref{assumption}, which yields that $\iota\colon C_b\to L^\Phi(\CP)$ defined by $\iota(f)=[f]$ is a well-defined, continuous, and injective lattice homomorphism, cf.\ Theorem \ref{thm:L1}.} Abusing notation slightly, we refer to $\iota(C_b)$ as $C_b$, to the equivalence classes by capital letters though. As before, let  
\[\CC:=\cl\big(\CH\big),\]
endowed with $\Norm_{L^\Phi(\CP)}$ and the $\CP$-q.s.~order. 

Our first main observation is that the results in \cite{Beissner,BionNadal} are based on separability of the primal space, which holds under a comparatively mild tightness condition. The following result is a decisive generalisation of \cite[Proposition 2.6]{BionNadal}.

\begin{proposition}\label{prop:density}
Suppose that, for every $\ep>0$, there exists a compact set $K\subset \Om$ with
 \begin{equation}\label{eq:compact}
  \|\ind_{\Om\setminus K}\|_{L^\Phi(\CP)}<\ep.\end{equation}
 Then, $\CC$ is separable.
\end{proposition}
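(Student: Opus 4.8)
The plan is to show separability of $\CC$ by exhibiting a countable dense subset, and the natural candidate comes from the countable structure available on a separable metrisable space $\Omega$. Since $\CC$ is the $\Norm_{L^\Phi(\CP)}$-closure of the lattice $\CH\subset C_b$, it suffices to find a countable subset of $\CH$ (or of $C_b$) that is dense in $\CC$. The tightness-type condition~\eqref{eq:compact} is the crucial ingredient: it lets us transfer approximation arguments from individual compact sets, where $C_b$ restricted to a compact metric space is separable in the uniform norm, to the global worst-case Luxemburg norm. First I would fix a countable dense subset $D=\{\omega_k\}_{k\in\N}$ of $\Omega$ and recall that a separable metrisable space is second countable, so there is a countable base for the topology.

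The key steps I would carry out in order are as follows. First, reduce the problem from $\CC$ to $\CH$: since $\CC=\cl(\CH)$, it is enough to produce a countable $\Norm_{L^\Phi(\CP)}$-dense subset of $\CH$. Second, reduce from the Luxemburg norm to the supremum norm on compacta via~\eqref{eq:compact}. Concretely, given $f\in\CH$ and $\e>0$, choose a compact $K$ with $\|\ind_{\Om\setminus K}\|_{L^\Phi(\CP)}<\e$; then for any $g\in\CH$ with $\|f-g\|_\infty$ small, I would split
\[
 \|f-g\|_{L^\Phi(\CP)}\le \|(f-g)\ind_K\|_{L^\Phi(\CP)}+\|(f-g)\ind_{\Om\setminus K}\|_{L^\Phi(\CP)}.
\]
The first term is controlled by $\|f-g\|_{\infty,K}\cdot\|\ind_\Omega\|_{L^\Phi(\CP)}$ using the lattice norm property and the fact that $\ind_\Omega\in L^\Phi(\CP)$ by Assumption~\ref{assumption}; the second term is controlled by $2\|f\|_\infty\,\|\ind_{\Om\setminus K}\|_{L^\Phi(\CP)}<2\|f\|_\infty\,\e$ once $g$ is also uniformly bounded by roughly $\|f\|_\infty$. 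Third, I would invoke separability of $C(K)$ in the uniform norm (compact metric spaces yield separable $C(K)$ by Stone--Weierstrass or directly), and the fact that functions in $\CH$ can be approximated uniformly on $K$ by a fixed countable family built from $D$ and rational coefficients, using the lattice operations available in $\CH$.

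The main obstacle I anticipate is the third step: producing a single \emph{countable} family inside $\CH$ that uniformly approximates every $f\in\CH$ simultaneously on all the compacta arising from~\eqref{eq:compact}. One must be careful that the approximants stay inside $\CH$ (not merely in $C_b$) and remain uniformly bounded, so that the tail estimate on $\Om\setminus K$ does not blow up. Since $\CH$ is only assumed to be a lattice containing $\ind_\Omega$ and generating the Borel $\sigma$-algebra, I would leverage its lattice structure together with the countable base of $\Omega$: separability of $\Omega$ gives a countable collection of ``test'' elements of $\CH$ separating points on each $K$, and finite lattice combinations with rational scalars of these, truncated appropriately, form the desired countable set. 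A clean way to organise this is to first establish separability of the closure of $\CH$ in the uniform norm on each compact $K$, then use a diagonal/exhaustion argument over a sequence $K_m$ of compacta with $\|\ind_{\Om\setminus K_m}\|_{L^\Phi(\CP)}\to 0$ (extracted from~\eqref{eq:compact} with $\e=1/m$) to pass to the global Luxemburg norm. Assembling these countable families over $m$ yields a countable $\Norm_{L^\Phi(\CP)}$-dense subset of $\CH$, hence of $\CC$.
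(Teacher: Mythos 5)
Your proposal is correct in substance and shares the paper's core estimate --- the split $\|X-X_0\|_{L^\Phi(\CP)}\le \kappa\,\|(X-X_0)\ind_K\|_{L^\infty(\CP)}+\big(\|X\|_\infty+\|X_0\|_\infty\big)\|\ind_{\Om\setminus K}\|_{L^\Phi(\CP)}$ with $\kappa=\|\ind_\Omega\|_{L^\Phi(\CP)}$ from Theorem~\ref{thm:L1}, plus truncation to keep approximants uniformly bounded is exactly what the paper does --- but you organise the countable-set construction differently, and your self-identified ``main obstacle'' is actually a non-issue that the paper dispatches in one line: since every subset of a separable normed space is separable, it suffices to prove the statement for the maximal case $\CH=C_b$, so there is no need whatsoever for the approximants to lie in $\CH$. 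With that reduction, the paper builds a single explicit countable family (truncations of the algebra generated by $\ind_\Omega$ and the functions $d(\cdot,\omega_n)\wedge m$) and applies Stone--Weierstrass once on each compact $K$ furnished by \eqref{eq:compact}; no exhaustion or diagonal argument over a sequence $K_m$ is needed, since $K$ is chosen anew for each $X$ and $\e$. Your alternative of staying inside $\CH$ does go through, but the cleanest way to close it is not via countable point-separating families and a lattice (Kakutani--Krein) version of Stone--Weierstrass, as you sketch; rather, use that $C(K_m)$ is separable for compact metrisable $K_m$ and that separability is hereditary, so $\CH|_{K_m}\subset C(K_m)$ admits a countable uniformly dense set of restrictions $D_m|_{K_m}$ with $D_m\subset\CH$; truncating the elements of $D_m$ at rational levels stays in $\CH$ (it is a vector sublattice containing $\ind_\Omega$), and the union over $m$ of these families is your countable dense set. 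Either way the proof is valid; what the paper's reduction buys is brevity and the elimination of the membership constraint, while your route buys the (here unneeded) extra information that $\CH$ itself, not just $\CC$, contains a countable $\Norm_{L^\Phi(\CP)}$-dense subset.
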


\begin{lemma}\label{lem:density}
Condition \eqref{eq:compact} is met in any of the following situations:
 \begin{itemize}
 \item[(1)] $\Omega$ is compact.
 \item[(2)] $\dom(\phi_{\Max})=[0,\infty)$ and, for all $t>0$, the set $\CP_t:=\{\P\in\CP\mid \phi_\P(t)>1\}$ is tight.
 \item[(3)] $\dom(\phi_{\Max})=[0,\infty)$ and $\CP$ is tight.
\end{itemize}
If $\Phi$ satisfies \eqref{eq:growth}, the validity of \eqref{eq:compact} implies that $\CP$ is tight.
\end{lemma}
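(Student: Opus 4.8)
The proof rests on an explicit formula for the Luxemburg norm of an indicator. Since $\phi_\P(0)=0$, for any event $A\in\CF$ we have $\phi_\P(\la^{-1}\ind_A)=\phi_\P(\la^{-1})\ind_A$, and therefore
\[
 \|\ind_A\|_{L^\Phi(\CP)}=\inf\Big\{\la>0\ \Big|\ \sup_{\P\in\CP}\phi_\P(\la^{-1})\,\P(A)\le 1\Big\}.
\]
All assertions reduce to estimating this quantity for $A=\Om\setminus K$. For (1) one simply takes $K=\Om$, so $\ind_{\Om\setminus K}=0$ and the norm vanishes. For (2), fix $\ep>0$, set $\la:=\ep/2$ and $t:=\la^{-1}$, and note that $m:=\phi_\Max(t)<\infty$ by the assumption $\dom(\phi_\Max)=[0,\infty)$. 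If $m\le 1$, then $\phi_\P(t)\le 1$ for every prior and any one-point (hence compact) $K$ works; if $m>1$, tightness of $\CP_t$ provides a compact $K$ with $\sup_{\P\in\CP_t}\P(\Om\setminus K)\le m^{-1}$. Splitting $\CP$ into $\CP_t$ and its complement and bounding $\phi_\P(t)\le m$ on $\CP_t$ and $\phi_\P(t)\le 1$ otherwise yields $\sup_{\P\in\CP}\phi_\P(t)\,\P(\Om\setminus K)\le 1$, so $\|\ind_{\Om\setminus K}\|_{L^\Phi(\CP)}\le\la<\ep$. Assertion (3) follows from (2), since each $\CP_t$ is a subset of the tight family $\CP$ and hence tight.

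For the converse, set $c:=\inf_{\P\in\CP}\phi_\P(x_0)$, which is strictly positive by \eqref{eq:growth}; we may assume $c<\infty$. The crucial input is convexity: because $\phi_\P(0)=0$ and $\phi_\P$ is convex, the map $s\mapsto\phi_\P(s)/s$ is nondecreasing on $(0,\infty)$, so $\phi_\P(s)\ge (s/x_0)\,\phi_\P(x_0)\ge (c/x_0)\,s$ for all $s\ge x_0$ and all $\P\in\CP$. Now fix $\ep>0$ and choose $\de>0$ with $\de\le x_0^{-1}$ and $x_0\de/c<\ep$. By \eqref{eq:compact} there is a compact $K$ with $\|\ind_{\Om\setminus K}\|_{L^\Phi(\CP)}<\de$, hence some $\la<\de$ with $\sup_{\P\in\CP}\phi_\P(\la^{-1})\,\P(\Om\setminus K)\le 1$. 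Since $\la^{-1}>\de^{-1}\ge x_0$, monotonicity together with the linear lower bound gives $\phi_\P(\la^{-1})\ge\phi_\P(\de^{-1})\ge (c/x_0)\,\de^{-1}$ for every $\P$, and therefore $\P(\Om\setminus K)\le x_0\de/c<\ep$ uniformly in $\P\in\CP$. This is precisely tightness of $\CP$.

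The genuinely delicate point is the converse. Tightness demands an upper bound on $\P(\Om\setminus K)$ that is uniformly small over $\CP$, whereas a naive appeal to the pointwise estimate $\phi_\P(x_0)\ge c$ only yields the non-vanishing bound $1/c$. The resolution is to upgrade this pointwise lower bound, via convexity of the Orlicz functions, to the uniform linear growth $\phi_\P(s)\ge (c/x_0)\,s$ for $s\ge x_0$; this forces $\phi_\P(\la^{-1})$ to grow as $\la\to 0$ and hence drives $\P(\Om\setminus K)$ to zero uniformly. Once the indicator-norm formula is recorded, the forward implications (1)--(3) are comparatively routine case distinctions.
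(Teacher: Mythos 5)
Your proof is correct and takes essentially the same route as the paper: for (1)--(3) you use the identical splitting of $\CP$ into $\CP_t$ and its complement together with $\phi_\Max(t)<\infty$, and for the converse you rederive via convexity the uniform linear minorant $\phi_\P(s)\ge (c/x_0)\,s$ for $s\ge x_0$, which is exactly the affine bound the paper extracts from \eqref{eq:growth} and channels through \eqref{eq:supnormP}. The only cosmetic difference is that you unwind the Luxemburg norm of indicators explicitly, whereas the paper invokes the previously established uniform estimate $\sup_{\P\in\CP}\E_\P[|X|]\le\frac{1-b}{a}\|X\|_{L^\Phi(\CP)}$ applied to $X=\ind_{\Omega\setminus K}$.
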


We emphasise that (3) is the typical minimal assumption in the literature. It is, in particular, satisfied in the $G$-framework, see \cite[Theorem IV.2.5]{peng2019nonlinear}.

\begin{example}\
\begin{enumerate}[(1)]
\item Let $p\in[1,\infty)$, and consider the case, where $\phi_\P(x)=x^p$ for all $x\ge 0$ and $\P\in\CP$. Then, Lemma~\ref{lem:density} implies that \eqref{eq:compact} holds if and only if $\CP$ is tight. 
\item Consider the case of a doubly penalised robust Orlicz space as in Example~\ref{ex.doubpen}(1) with bounded multiplicative penalty $\theta\colon\CP\to(0,\infty)$ and additive penalty $\gamma\colon\CP\to[0,\infty)$. Then, $\dom(\phi_\Max)=[0,\infty)$ if and only if the joint Orlicz function $\phi$ satisfies $\dom(\phi)=[0,\infty)$. Moreover, condition (2) in Lemma~\ref{lem:density} is met if the lower level sets of $\gamma$ are tight. Notice that, in this case, the validity of \eqref{eq:growth} implies the boundedness of $\gamma$, and thus, naturally, the tightness of $\CP$. 
\end{enumerate}
\end{example}

\begin{remark}\
\begin{enumerate}[(1)]
\item Strictly speaking, Bion-Nadal \& Kervarec \cite{BionNadal} work with the \textit{Lebesgue prolongation} of a capacity $\mf c$ defined on a generating lattice of continuous functions. In most of their results, they assume that $\mf c$ is a \textit{Prokhorov capacity} on a separable metrisable space. As $\ind_{\Omega\setminus K}$ is l.s.c.\ for every compact $K\subset\Omega$, one thus obtains, for each $\ep>0$, the existence of a compact set $K\subset\Omega$ such that 
\[\mf c(\ind_{\Omega\setminus K})\le\ep.\]
This counterpart of \eqref{eq:compact} admits to perform our proof of Proposition~\ref{prop:density} in their framework, and the result transfers. 
\item We comment here on the role of Proposition~\ref{prop:equivalences} and Theorem~\ref{thm.super} in the present setting. {It is known that }
$C_b$ over a Polish space does not admit any nontrivial $\sigma$-order continuous linear functional when endowed with the pointwise order. One could therefore interpret Proposition~\ref{prop:equivalences} as a \textit{dichotomy}: either the closure $\CC$ of $C_b$ in $L^\Phi(\CP)$ behaves very much like the space of continuous functions, or it is an ideal of $L^\Phi(\CP)$, which could be obtained more directly as the closure of $\Linfty$ and to which in most typical cases Theorem~\ref{thm.super} applies. \\
As an illustrative example, consider $\Omega=[0,1]$ endowed with its $\sigma$-algebra $\CF$ of Borel sets and set $\CP$ to be the set of all atomless probability measures. Consider the robust weighted $L^1$-space, where $\theta\equiv 1$. One shows that each $X\in\CC$ has a unique continuous representative $f$ and satisfies $\|X\|_{L^\phi(\CP)}=\|f\|_\infty$. In this setting, the inclusions
\[\{0\}=\ca^\sigma(\CC)\subsetneq\ca(\CC)\subsetneq\CC^*\cap\ca\]
hold. For the first equality, note that $\CC$ is lattice-isometric to $C_b$, and the existence of a nontrivial $\sigma$-order continuous linear functional would contradict the result cited above. For the second strict inclusion, consider the linear bounded functional $\ell(X):=f(1)$, $X\in\CC$, where $f\in X$ is a continuous representative. Although it corresponds to the Dirac measure concentrated at 1, it cannot be identified with a measure absolutely continuous with respect to $\CP$. 
\end{enumerate}
\end{remark}

We conclude with a Riesz representation result for the dual of $\CC$, which follows directly from the more general observations in Section~\ref{sec.completion} and extends~\cite[Proposition 4]{Beissner} to our setting.

\begin{corollary}\label{cor:representation}
 Assume that $\CH=C_b$, $\CP$ is weakly compact, and that $\dom(\phi_\Max)=[0,\infty)$. Then, 
 $$\CC^*=\ca\big(\CC\big).$$ 
\end{corollary}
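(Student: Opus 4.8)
The plan is to deduce Corollary~\ref{cor:representation} from Proposition~\ref{lem:representation0}, whose second assertion states precisely that $\CC^* = \ca(\CC)$ whenever $\CH \subset L^\infty(\CP)$ and $\Norm_{L^\Phi(\CP)}$ is $\sigma$-order continuous on $\CH$. Since the hypotheses here are $\CH = C_b$, weak compactness of $\CP$, and $\dom(\phi_\Max) = [0,\infty)$, the task reduces to verifying that these two structural conditions of Proposition~\ref{lem:representation0} are met. The inclusion $\CH = C_b \subset L^\infty(\CP)$ is immediate, since bounded continuous functions are bounded; in particular Assumption~\ref{assumption} holds because $\dom(\phi_\Max) = [0,\infty)$ guarantees $\phi_\Max(x_0) \in [0,\infty)$ for every $x_0 > 0$, so $[\ind_\Omega] \in L^\Phi(\CP)$ and $\iota$ is a well-defined lattice homomorphism. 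Thus the only substantive point is the $\sigma$-order continuity of $\Norm_{L^\Phi(\CP)}$ on $C_b$.

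First I would establish $\sigma$-order continuity of the norm on $C_b$. The relevant situation is a decreasing sequence $(Y_n)_{n\in\N} \subset C_b$ with $Y_n \downarrow 0$ in the $\CP$-q.s.\ order; I must show $\|Y_n\|_{L^\Phi(\CP)} \to 0$. Weak compactness of $\CP$ forces tightness (by Prokhorov's theorem), and combined with $\dom(\phi_\Max) = [0,\infty)$ this places us in case~(3) of Lemma~\ref{lem:density}, so the tightness condition~\eqref{eq:compact} holds: for each $\e > 0$ there is a compact $K \subset \Omega$ with $\|\ind_{\Omega\setminus K}\|_{L^\Phi(\CP)} < \e$. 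On $K$, a decreasing sequence of continuous functions tending pointwise to $0$ converges uniformly by Dini's theorem, which lets me bound $Y_n$ uniformly on $K$ and control the tail off $K$ by the small quantity $\e$. The plan is to split the estimate as $\|Y_n\|_{L^\Phi(\CP)} \le \|Y_n \ind_K\|_{L^\Phi(\CP)} + \|Y_n \ind_{\Omega\setminus K}\|_{L^\Phi(\CP)}$; the first term is handled by Dini together with the bound $\|Z\|_{L^\Phi(\CP)} \le \kappa \|Z\|_{L^\infty(\CP)}$ from Theorem~\ref{thm:L1} (available since Assumption~\ref{assumption} places us in that theorem's regime), and the second by monotonicity and the tail estimate.

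Once $\sigma$-order continuity of $\Norm_{L^\Phi(\CP)}$ on $\CH = C_b$ is in hand, I would invoke Proposition~\ref{lem:representation0} directly: its hypotheses $\CH \subset L^\infty(\CP)$ and $\sigma$-order continuity of the norm on $\CH$ are now both verified, and it yields
\[
\CC^* = \ca(\CC) = \ca(\CH) = \ca^\sigma(\CH) \cap \CC^*,
\]
from which the claimed equality $\CC^* = \ca(\CC)$ follows at once.

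The main obstacle I anticipate is the $\sigma$-order continuity verification, specifically reconciling the \emph{$\CP$-q.s.\ order} on $L^\Phi(\CP)$ with the \emph{pointwise} behaviour of continuous functions needed to apply Dini's theorem. A sequence $Y_n \downarrow 0$ in the $\CP$-q.s.\ order need not decrease to $0$ pointwise everywhere, only quasi-surely, so care is required: one works with representatives and uses that the infimum in $L^\Phi(\CP)$ is computed pointwise (as recorded in the Notation section, $X \wedge Y = [f \wedge g]$), together with the fact that $\CP$-negligible sets are governed by tightness. I expect that the definition of $\sigma$-order continuity (Definition~\ref{def:sigmaorder}), which allows domination $|x_n| \peq y_n$ by an auxiliary decreasing sequence, provides exactly the flexibility needed to pass from the continuous representatives to the quasi-sure order, but assembling this cleanly is the delicate step.
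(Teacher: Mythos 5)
Your skeleton matches the paper's: both proofs reduce the corollary to Proposition~\ref{lem:representation0}, so that everything hinges on verifying $\sigma$-order continuity of $\Norm_{L^\Phi(\CP)}$ on $C_b$. But your mechanism for that verification has two genuine gaps. First, the inference ``weak compactness of $\CP$ forces tightness (by Prokhorov's theorem)'' invokes the \emph{converse} half of Prokhorov's theorem, which requires $\Omega$ to be Polish; Section~\ref{sec:continuous} assumes only that $\Omega$ is separable and metrisable, and the corollary does not assume tightness. Second, and fatally, the Dini-on-$K$ step does not work. Dini's theorem on the compact set $K$ supplied by \eqref{eq:compact} needs the representatives $f_n$ to decrease pointwise to $0$ \emph{everywhere on $K$}, whereas the hypothesis only gives $\inf_n X_n=0$ in the $\CP$-q.s.\ order; after replacing $f_n$ by $\min_{k\le n}f_k$ you get pointwise decrease, but the pointwise limit can be strictly positive on a $\CP$-negligible subset of $K$, and then neither $\sup_K f_n$ nor $\|Y_n\ind_K\|_{L^\infty(\CP)}$ tends to $0$. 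A concrete instance: $\Omega=[0,1]$ (compact, so $K=\Omega$ is a legitimate --- indeed the canonical --- choice under \eqref{eq:compact}), $\CP=\{\lambda\}$ with $\lambda$ Lebesgue measure, $\phi_\P(x)=x$, and $f_n(x)=(1-nx)^+$. Here $X_n=[f_n]\downarrow 0$ quasi-surely and $\|X_n\|_{L^\Phi(\CP)}=\tfrac{1}{2n}\to 0$, yet $f_n\downarrow\ind_{\{0\}}$ pointwise on $K$, Dini is inapplicable, and $\|f_n\ind_K\|_{L^\infty(\CP)}=1$ for all $n$, so your split estimate produces no decay at all. The flexibility of Definition~\ref{def:sigmaorder} that you hoped would rescue this does not help: the hard case is already $x_n=y_n=f_n$. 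Repairing the argument would require choosing $K$ \emph{adapted to the sequence} so as to excise the spikes --- an Egorov-type selection which is available for a single measure but is exactly what fails uniformly over a nondominated family $\CP$; so the method, not just the bookkeeping, is inadequate.

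The paper's proof avoids this by integrating \emph{before} invoking compactness: since $\dom(\phi_\Max)=[0,\infty)$, the function $\phi_\Max$ is finite, hence continuous, so $\phi_\Max(\al X_n)\in C_b$ for each $\al>0$; consequently each map $\CP\ni\P\mapsto\E_\P[\phi_\Max(\al X_n)]$ is weakly continuous, these maps decrease in $n$, and for each fixed $\P$ they tend to $0$ by dominated convergence (this step absorbs the spike that destroys your estimate). Dini's theorem is then applied on the weakly compact set $\CP$ itself, giving $\sup_{\P\in\CP}\E_\P[\phi_\P(\al|X_n|)]\le\sup_{\P\in\CP}\E_\P[\phi_\Max(\al|X_n|)]\to 0$, hence $\|X_n\|_{L^\Phi(\CP)}\to 0$, after which Proposition~\ref{lem:representation0} applies exactly as you intended. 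Note that this uses weak compactness of $\CP$ directly (as the domain of Dini's theorem) rather than through tightness, and it uses $\dom(\phi_\Max)=[0,\infty)$ to keep $\phi_\Max(\al X_n)$ inside $C_b$ --- whereas your proposal consumes both hypotheses only through Lemma~\ref{lem:density}, missing their actual role.
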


\subsection{Option spanning under uncertainty}\label{sec:options}

A rich strand of literature deals with the power of options to complete a market, at least in an approximate sense. 
{As announced in Section~\ref{discussion spanning}, we study option spanning under potentially nondominated uncertainty here. }

Fix a \textit{limited liability claim} $X\in L^\Phi(\CP)$, i.e., $X\succeq 0$ holds. Its \textit{option space} 
\[\CH_X:=\tn{span}\left(\{\ind_\Omega\}\cup\{(X-k\ind_\Omega)^+\mid k\in\R\}\right)\]
is the collection of all portfolios of call and put options written on $X$. In line with the simplifying assumption in Section~\ref{sec.completion}, we will assume w.l.o.g.\ that 
\[\CF=\sigma(\{f\mid f\in X\}),\]
a condition studied in detail in the existing literature on option spanning. We also introduce the norm closure 
\[\CC_X:=\cl(\CH_X),\]
the space of all contingent claims, which can be approximated by linear combinations of call and put options.

\begin{proposition}\label{prop:option}
$\big(\CH_X,\peq,\Norm_{L^\Phi(\CP)}\big)$ is a separable normed sublattice of $L^\Phi(\CP)$, and $\big(\mathcal \CC_X,\peq,\Norm_{L^\Phi(\CP)}\big)$
is a separable Banach sublattice of $L^\Phi(\CP)$. 
\end{proposition}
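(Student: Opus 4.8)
The plan is to realise $\CH_X$ as the image of a concrete function lattice under composition with a representative of $X$; this makes the sublattice property transparent, while separability follows from a Lipschitz estimate in the strike parameter.

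First I would fix a representative $f\in X$ and introduce the space $\mathcal A$ of all $g\colon\R\to\R$ that are continuous, piecewise affine with finitely many kinks, and constant on some half-line $(-\infty,a]$. A short computation identifies $\mathcal A$ with the linear span of $\{1\}\cup\{(\,\cdot\,-k)^+\mid k\in\R\}$: such a $g$ is constant to the left of its leftmost kink, and recording its slope increments $c_i$ at the kinks $k_i$ yields $g=g(a)+\sum_i c_i(\,\cdot\,-k_i)^+$, while conversely every finite combination of this form lies in $\mathcal A$. Crucially, $\mathcal A$ is a sublattice of $\R^\R$, because the pointwise maximum and minimum of two continuous piecewise affine functions that are constant near $-\infty$ inherit these three properties. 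I would then observe that $T\colon\mathcal A\to L^0(\CP)$, $g\mapsto[g\circ f]$, is linear and a lattice homomorphism — it commutes with $\vee$ and $\wedge$ since these are computed representative-wise on $L^0(\CP)$ — and that its image is exactly $\CH_X$. Since $X\succeq0$ gives $X=(X-0\cdot\ind_\Omega)^+\in\CH_X$, and since $0\peq(X-k\ind_\Omega)^+\peq|X|+|k|\ind_\Omega$ with $L^\Phi(\CP)$ an ideal containing $\ind_\Omega$, it follows that $\CH_X$ is a normed sublattice of $L^\Phi(\CP)$.

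For separability, the key estimate is that $k\mapsto(x-k)^+$ is $1$-Lipschitz uniformly in $x$, so that $\bigl|(X-k\ind_\Omega)^+-(X-k'\ind_\Omega)^+\bigr|\peq|k-k'|\ind_\Omega$ in $L^0(\CP)$; as $\Norm_{L^\Phi(\CP)}$ is a lattice norm this gives
\[\bigl\|(X-k\ind_\Omega)^+-(X-k'\ind_\Omega)^+\bigr\|_{L^\Phi(\CP)}\le|k-k'|\,\|\ind_\Omega\|_{L^\Phi(\CP)}.\]
I would then take the countable set $D$ of finite $\QW$-linear combinations of $\ind_\Omega$ and the options $(X-q\ind_\Omega)^+$, $q\in\QW$, and argue it is dense in $\CH_X$: given $c_0\ind_\Omega+\sum_{i=1}^n c_i(X-k_i\ind_\Omega)^+$, each strike $k_i$ is approximated by a rational $q_i$ via the displayed bound and each coefficient by a rational, the total error being controlled because every summand has finite norm. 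Hence $\CH_X$ is separable, and so is $\CC_X=\cl(\CH_X)$.

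It remains to assemble the conclusions. Since $\Norm_{L^\Phi(\CP)}$ is a lattice norm, $\vee$ and $\wedge$ are uniformly continuous, so the closure $\CC_X$ of the sublattice $\CH_X$ is again a sublattice; being a closed subspace of the Banach lattice $L^\Phi(\CP)$ it is complete, hence a separable Banach sublattice. Theorem~\ref{thm.main} applies to both $\CH_X$ and $\CC_X$ directly, as they are separable subspaces of $L^\Phi(\CP)$. For Theorem~\ref{thm.super} I would verify its standing hypotheses from Section~\ref{sec.completion}: $\CH_X$ contains $\ind_\Omega$, and since $X\in\CH_X$ the family $\{f\mid[f]\in\CH_X\}$ generates $\sigma(\{f\mid f\in X\})=\CF$, so $\CH_X$ is a generating sublattice with closure $\CC_X$. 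The hard part is the sublattice property — stability of the span of calls and puts under $\vee$ and $\wedge$ — which I resolve through the piecewise-affine description above; the separability and completeness steps are then routine.
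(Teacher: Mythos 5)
Your proof is correct, and on the decisive point --- the sublattice property of $\CH_X$ --- it takes a genuinely different route from the paper. The paper observes that $L^\Phi(\CP)$, being Dedekind $\sigma$-complete, is uniformly complete (citing \cite[Proposition 1.1.8]{MeyNie}) and then simply replicates the argument of \cite[Theorem 3.1]{Options}, which operates in an arbitrary uniformly complete vector lattice by abstract functional-calculus-type manipulations. You instead exploit the concrete realisation $L^\Phi(\CP)\subset L^0(\CP)$: since lattice operations in $L^0(\CP)$ are computed pointwise on representatives, the composition map $g\mapsto[g\circ f]$ for a fixed representative $f\in X$ is a lattice homomorphism on your lattice $\mathcal A$ of continuous, piecewise-affine functions constant near $-\infty$, and your span description of $\mathcal A$ identifies its image with $\CH_X$; the sublattice property then follows because images of lattice homomorphisms are sublattices. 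This is more elementary --- no uniform completeness or abstract functional calculus is needed --- at the price of being tied to function-space models, whereas the argument the paper cites covers uniformly complete vector lattices with no pointwise structure. Your separability argument makes explicit, via the $1$-Lipschitz estimate in the strike together with the lattice-norm property of $\Norm_{L^\Phi(\CP)}$, the density of rational-coefficient, rational-strike combinations that the paper merely asserts; and your closure step (uniform continuity of the lattice operations, completeness of closed subspaces) matches the paper's appeal to Proposition~\ref{lem:representation0}. Your verification of the standing hypotheses of Section~\ref{sec.completion} --- that $\ind_\Omega\in\CH_X$ and that $\CH_X$ generates $\CF$ because $X=(X-0\cdot\ind_\Omega)^+\in\CH_X$ and $\CF=\sigma(\{f\mid f\in X\})$ --- is a welcome addition that the paper leaves implicit.
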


{By Proposition~\ref{prop:option}, Theorems~\ref{thm.main} and~\ref{thm.super} apply, and we may draw two interesting financial conclusions from our results.}
First, by Theorem~\ref{thm.main}, \textit{nondominated} uncertainty collapses both over the option space $\CH_X$ and its closure $\CC_X$. In fact, the same reference probability measure $\P^*$ can be chosen for both spaces (Corollary~\ref{cor.main}). $\P^*$ can be interpreted as intrinsic to and $J_{\P^*}(\CH_X)\subset L^1(\P^*)$ as a copy of the original option space $\CH_X$. This motivates the following corollary.
\begin{corollary}\label{cor:ae}
Let $\P^*\in\ca^1_+(L^\Phi(\CP))$ be a dominating probability measure for $\CH_X$ as constructed in Theorem~\ref{thm.main}. Then, for each $Y\in L^\Phi(\CP)$, there is a sequence $(Y_n)_{n\in\N}\subset\CH_X$ such that $Y_n\to Y$ $\P^*$-a.s.\ as $n\to\infty$. 
\end{corollary}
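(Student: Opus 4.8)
The plan is to exploit the standing assumption $\CF=\sigma(\{f\mid f\in X\})$ together with $X\succeq 0$: the former forces every $Y\in L^\Phi(\CP)$ to be, up to $\P^*$-null sets, a Borel function of $X$, while an explicit computation shows that $\CH_X$ already contains \emph{every} continuous piecewise-linear function of $X$. The desired $\P^*$-a.s.\ approximation then reduces to a classical density statement for Borel functions on the line, and crucially does not involve the norm topology of $L^\Phi(\CP)$ at all.

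First I would describe $\CH_X$ as a space of functions of $X$. Since $X\succeq 0$, for $k\le 0$ we have $(X-k\ind_\Omega)^+=X+|k|\ind_\Omega$, so in particular $X=(X-0\cdot\ind_\Omega)^+\in\CH_X$. Writing a generic element as $c\,\ind_\Omega+s_0\,X+\sum_{i=1}^m a_i\,(X-k_i\ind_\Omega)^+$ with $k_i>0$ --- where $c$ is the value at $0$, $s_0$ the slope on $[0,k_1]$, and $a_i$ the slope increment at the kink $k_i$ --- identifies $\CH_X$ with the set of all $h(X)$ for $h\colon[0,\infty)\to\R$ continuous and piecewise linear with finitely many breakpoints. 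Here $h(X)$ denotes the class of $h\circ f$ for a fixed representative $f\in X$.

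Next I would reduce to an approximation problem on $\R$. Because $\P^*\in\ca^1_+(L^\Phi(\CP))$ satisfies $\P^*\ll\CP$, all representatives of $X$ agree $\P^*$-a.s., so the generating assumption yields $\CF=\sigma(f)$ modulo $\P^*$-null sets; moreover $f\ge 0$ $\P^*$-a.s., and $\nu:=\P^*\circ f^{-1}$ is a Borel probability measure on $[0,\infty)$. The Doob--Dynkin factorisation lemma then provides a Borel $g\colon\R\to\R$ with $Y=g\circ f$ $\P^*$-a.s., and since $Y$ is $\P^*$-a.s.\ finite we have $g\in L^0(\nu)$. It therefore suffices to find continuous piecewise-linear $h_n$ with $h_n\to g$ $\nu$-a.e., because then $Y_n:=h_n(X)\in\CH_X$ and $Y_n\to Y$ $\P^*$-a.s.

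The analytic core is the density of such $h_n$ in $L^0(\nu)$ for convergence in measure, which I would argue in three layers: Lusin's theorem approximates $g$ in $\nu$-measure by continuous functions; each continuous function is a uniform-on-compacts limit of piecewise-linear functions, and as $\nu$ is a finite measure this entails convergence in $\nu$-measure; hence the piecewise-linear functions from the previous paragraph are dense in $L^0(\nu)$ in measure. Selecting a sequence converging to $g$ in measure and extracting a subsequence delivers the $\nu$-a.e.\ convergence. I expect the only genuine care to be bookkeeping: verifying that each approximant really lies in $\CH_X$ (handled by the $(X-k\ind_\Omega)^+$ representation, extending each $h_n$ affinely outside a compact set so it retains finitely many breakpoints) and converting convergence in measure into the required $\P^*$-a.s.\ convergence via the subsequence. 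The assertion is thus purely about almost-sure approximation and reflects that $\CH_X$, though far from norm-dense, is rich enough as a family of functions of $X$ to recover every $X$-measurable claim pointwise.
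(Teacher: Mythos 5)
Your proof is correct, but it takes a genuinely different route from the paper's. The paper disposes of the corollary in two lines: it observes that $J_{\P^*}(L^\Phi(\CP))\subset L^1(\P^*)$ is an ideal on which $\P^*$ acts as a strictly positive bounded linear functional, and then invokes the abstract option-spanning result \cite[Corollary 3.2(b)]{Options} of Gao and Xanthos, which delivers exactly the asserted a.s.\ approximation in that setting. You instead reprove the substance of that citation from scratch: the identification of $\CH_X$ with the functions $h(X)$, where $h$ ranges over continuous piecewise-linear maps on $[0,\infty)$ with finitely many breakpoints (using $X\succeq 0$ to absorb strikes $k\le 0$ into affine terms); the Doob--Dynkin factorisation $Y=g\circ f$ $\P^*$-a.s.\ --- where you correctly flag the one delicate point, namely that $\CF$ is generated by \emph{all} representatives of $X$ and therefore only agrees with $\sigma(f)$ for a fixed representative $f$ up to $\P^*$-completion, so the factorisation holds modulo a null set; and finally Lusin's theorem plus uniform approximation on compacta (legitimate, since the finite Borel measure $\nu=\P^*\circ f^{-1}$ on the Polish space $[0,\infty)$ is automatically Radon) to obtain convergence in $\nu$-measure, upgraded to $\nu$-a.e.\ convergence along a subsequence by Borel--Cantelli. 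What the paper's approach buys is brevity and a direct connection to the lattice-theoretic spanning literature, where the companion norm-density statements also live; what your approach buys is a self-contained, elementary argument that makes transparent precisely which hypotheses do the work ($X\succeq 0$, the generating assumption $\CF=\sigma(\{f\mid f\in X\})$, and $\P^*\ll\CP$) and that, as you observe, the conclusion is a purely measure-theoretic statement independent of the norm topology of $L^\Phi(\CP)$.
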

The second conclusion concerns the \textit{topological spanning power of $X$} and follows directly from Theorem~\ref{thm.super}.
\begin{corollary}\label{cor:spanningpower}Suppose $X$ has topological spanning power in that 
\begin{equation}\label{eq:options}\CC_X\text{ is an ideal of }L^\Phi(\CP).\end{equation}
Let $\P^*$ be a probability measure as in Theorem~\ref{thm.main}. Then, the following assertions hold:
\begin{itemize}
\item[(1)]$\CC_X$ is super Dedekind complete.
\item[(2)]$\ca(\CC_X)\approx\CP\approx \P^*$ and the unit ball of $\ca(\CC_X)$ is weakly compact in $L^1(\P^*)$.
\item[(3)]$\CC_X$ is lattice-isomorphic to an ideal of $L^1(\P^*)$. 
\item[(4)]$X\in\cl(\Linfty)=\CC_X$.
\end{itemize}
In particular, the topological spanning power of limited liability claims is always weaker than \eqref{eq:options} unless $\CP$ is dominated.
\end{corollary}

Moreover, under the mild growth condition \eqref{eq:growth} on $\Phi$, which does not depend on the concrete choice of the limited liability claim $X$ whatsoever, \eqref{eq:options} implies that all densities of priors in $\CP$ w.r.t.\ $\P^*$ are uniformly $\P^*$-integrable. This can be seen as a converse to the spanning power results on classical $L^p$-spaces, $1\le p<\infty$. In conclusion, the topological spanning power of options under nondominated uncertainty is \textit{always} weaker than \eqref{eq:options}, whereas $X$ always has full spanning power with respect to the reference measure $\P^*$ by Corollary~\ref{cor:ae}.

\subsection{Regular pricing rules and the Fatou property}\label{sec:regular}

\textit{Positive linear functionals} on a space of contingent claims are commonly interpreted as linear pricing rules. {In case of a} generating lattice $\CH\subset L^\Phi(\CP)$ as studied in Section~\ref{sec.completion}, $\sigma$-order continuity of a positive linear functional $\ell\colon\CH\to\R$ has the following economic interpretation. Whenever a sequence of contingent claims $(X_n)_{n\in\N}$ satisfies $X_{n+1}\peq X_n$, $n\in\N$, and $\inf_{n\in\N}X_n=0$, that is, the payoffs $X_n$ become arbitrarily invaluable in the objective $\CP$-q.s.\ order, their prices $\ell(X_n)$ under $\ell$ vanish:  
\[\lim_{n\to\infty}\ell(X_n)=0.\]
Pricing with such functionals does not exaggerate the value of (objectively) increasingly invaluable contingent claims.
By Lemma~\ref{lem:sigmaorder}, such functionals correspond to measures. The condition 
\[\CP\approx\ca^\sigma_+(\CH)\]
encountered in Lemma~\ref{lem:monclass} and Propositions~\ref{prop:equivalences0}--\ref{prop:equivalences} means that the set of all regular pricing rules (in the sense described above) holds the same information about {(im)possibility} of events as the set $\CP$ of ``physical priors". If $\CH=L^\Phi(\CP)$, Proposition~\ref{prop:equivalences} shows $\CP\subset\ca^\sigma(L^\Phi(\CP))=\ca(L^\Phi(\CP))$. 

For smaller generating lattices, our results describe a dichotomy in the case of full information: either the lattice does not generally admit aggregation even of countable order bounded families of contingent claims, or it is an ideal. In the case where bounded contingent claims are dense in $\CH$, the latter further specialises to $\CH=\Linfty$, or, if $\CH$ is closed in $L^\Phi(\CP)$, $\CH=\cl(\Linfty)$. 

{Another observation on the conjunction of $\CP\approx\ca^\sigma_+(\CH)$ and Dedekind $\sigma$-completeness of $\CH$ concerns the \emph{Fatou property}.} The latter is one of the most prominent phenomena studied in theoretical mathematical finance. {Morally speaking, it }relates order closedness properties of convex sets to dual representations of these in terms of measures. In a dominated framework, say, $L^\infty(\P)$ for a single reference measure $\P$, a subset $\CB\subset L^\infty(\P)$ is \textit{Fatou closed} if, for each sequence $(X_n)_{n\in\N}\subset\CB$ which converges $\P$-a.s.\ to some $X\in L^\infty(\P)$ and whose moduli are dominated by some $Y\in L^\infty(\P)$, the limit satisfies $X\in\CB$. Due to the super Dedekind completeness of $L^\infty(\P)$, Fatou closedness is equivalent to \textit{order closedness} of $\CB$. It is well known that $\CB\subset L^\infty(\P)$ is Fatou closed if and only if it has a representation of shape
\[\CB=\big\{X\in L^\infty\,\big|\, \forall\mu\in\mf D\colon\smallint X\,{\rm d}\mu\le h(\mu)\big\}\]
for a suitable set $\mf D\subset\ca(\P)$ and a function $h:\mf D\to\R$. 

This observation does not directly transfer to nondominated frameworks. {In our setting, each $\mu\in\ca^\sigma_+(\CH)$ defines a seminorm $\rho_\mu:\CH\to[0,\infty)$ by $\rho_\mu(X)=\int|X|\,{\rm d}\mu$. Let $\tau$ be the locally convex topology on $\CH$ generated by them.\footnote{~In the literature, $\tau$ is often referred to as the \emph{absolute weak topology} $|\sigma|(\CH,\ca^\sigma_+(\CH))$.} }
\begin{observation}
If $\CP\approx\ca^\sigma_+(\CH)$, then $\tau$ is a locally convex Hausdorff topology.  In particular, $\tau$ admits the application of separating hyperplane theorems. By Kaplan's Theorem~\cite[Theorem 3.50]{AliBurk2} and \cite[Theorem 1.57]{AliBurk2}, the dual of $(\CH,\tau)^*$ is $\ca^\sigma(\CH)$.
\end{observation}
Let $\mathcal B\subset\CH$ be a nonempty convex set. 
\begin{observation}
If $\CH$ is Dedekind $\sigma$-complete, the following statements are equivalent:
\begin{enumerate}[(1)]
\item $\mathcal B$ is sequentially order closed.
\item For all sequences $(X_n)_{n\in\N}$ whose moduli $(|X_n|)_{n\in\N}$ admit some upper bound $Y\in\CH$ and which converge $\CP$-q.s.\ to some $X\in\CH$, this limit $X$ lies in $\mathcal B$.
\end{enumerate}
\end{observation}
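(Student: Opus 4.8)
The plan is to prove the equivalence of sequential order closedness of a convex set $\mathcal B\subset\CH$ with the \emph{order-bounded $\CP$-q.s.\ sequential closedness} described in assertion (2), under the standing assumption that $\CH$ is Dedekind $\sigma$-complete. First I would recall the definition of (sequential) order closedness in a vector lattice: a set is sequentially order closed if whenever a sequence $(X_n)_{n\in\N}\subset\mathcal B$ order converges to some $X\in\CH$, one has $X\in\mathcal B$. Here order convergence of $(X_n)_{n\in\N}$ to $X$ means there is a sequence $(Z_n)_{n\in\N}\subset\CH_+$ with $Z_n\downarrow 0$ (i.e.\ $Z_{n+1}\peq Z_n$ and $\inf_n Z_n=0$ in $\CH$) and $|X_n-X|\peq Z_n$ for all $n$. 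The crux of the argument is therefore to translate this abstract order-convergence notion into the concrete statement in (2), namely $\CP$-q.s.\ convergence together with a single order bound $Y\in\CH$ for the moduli $(|X_n|)_{n\in\N}$.

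The key step, which I expect to be the main obstacle, is the direction $(2)\Rightarrow(1)$: starting from an order-convergent sequence $(X_n)_{n\in\N}$ in $\mathcal B$ with limit $X\in\CH$, I must verify that this sequence satisfies the hypotheses of (2), i.e.\ that it converges $\CP$-q.s.\ to $X$ and that its moduli are dominated by a single element of $\CH$. The order bound is immediate: from $|X_n-X|\peq Z_n\peq Z_1$ we obtain $|X_n|\peq|X|+Z_1=:Y\in\CH$ for all $n$, using that $\CH$ is a sublattice containing $\ind_\Omega$ and hence closed under the lattice operations needed to form $Y$. The more delicate point is the passage from abstract order convergence to $\CP$-q.s.\ convergence. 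Here I would invoke the fact that $\CH$ is a sublattice of $L^0(\CP)$ carrying the $\CP$-q.s.\ order, so that infima and suprema in $\CH$, when they exist, are computed representative-wise as in $L^0(\CP)$. Fixing a representative $z_n$ of each $Z_n$ with $z_{n+1}\le z_n$ pointwise and passing to a representative of the infimum, the relation $\inf_n Z_n=0$ in $\CH$ yields that the pointwise infimum of the $z_n$ vanishes $\CP$-q.s.; combined with $|X_n-X|\peq Z_n$ this forces $X_n\to X$ $\CP$-q.s. The subtlety is that $\inf$ in $\CH$ need not coincide with the pointwise infimum unless one knows the infimum is attained in $\CH$ \emph{and} is order-compatible with $L^0(\CP)$ — but since $\CH\subset L^0(\CP)$ is order-embedded and $0\in\CH$ is the infimum there, the two infima agree, which closes the gap.

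For the converse direction $(1)\Rightarrow(2)$, I would argue that a sequence satisfying the hypotheses of (2) is automatically order convergent in $\CH$, so that sequential order closedness directly yields $X\in\mathcal B$. Given $(X_n)_{n\in\N}$ converging $\CP$-q.s.\ to $X$ with $|X_n|\peq Y$ for all $n$, set $Z_n:=\sup_{m\ge n}|X_m-X|$, where the supremum is taken in $\CH$. This is exactly the point where Dedekind $\sigma$-completeness of $\CH$ is indispensable: the family $(|X_m-X|)_{m\ge n}$ is order bounded by $2Y\in\CH$, hence its countable supremum $Z_n$ exists in $\CH$. By construction $Z_n\downarrow$, and $\inf_n Z_n=0$ follows from the $\CP$-q.s.\ convergence $X_m\to X$ (the pointwise $\limsup_m|X_m-X|=0$ $\CP$-q.s.\ translates, via the representative-wise computation of suprema and infima noted above, into $\inf_n Z_n=0$ in $\CH$). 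Since $|X_n-X|\peq Z_n$, the sequence $(X_n)_{n\in\N}$ order converges to $X$ in $\CH$, and sequential order closedness of $\mathcal B$ gives $X\in\mathcal B$.

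Finally I would assemble the two implications into the stated equivalence, emphasising that Dedekind $\sigma$-completeness is used exactly once and essentially — in forming the countable suprema $Z_n$ — while the compatibility of lattice operations and order limits between $\CH$ and the ambient $L^0(\CP)$ is what allows the purely order-theoretic notion of order convergence to be read off as $\CP$-q.s.\ convergence under a uniform order bound. The remaining details (monotonicity of the $Z_n$, measurability of representatives, and the elementary identity $\inf_n\sup_{m\ge n}|x_m-x|=\limsup_m|x_m-x|$ pointwise) are routine and I would state them without extended computation.
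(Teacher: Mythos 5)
The skeleton of your argument — translating order convergence in $\CH$ into dominated $\CP$-q.s.\ convergence, with Dedekind $\sigma$-completeness used exactly to form $Z_n:=\sup_{m\ge n}|X_m-X|$ — is the intended routine proof, but the step you dismiss in one line is precisely where the argument fails: the claim that, since $\CH\subset L^0(\CP)$ is order-embedded, the infimum computed in $\CH$ agrees with the $\CP$-q.s.\ pointwise infimum. A sublattice embedding preserves \emph{finite} lattice operations, not countable infima; what you need is that $\CH$ is \emph{regularly} embedded, and Dedekind $\sigma$-completeness of $\CH$ alone does not provide this. Concretely, fix a free ultrafilter $\mathcal U$ on $\N$, let $\Omega:=\N\cup\{\infty\}$, $\CF:=2^\Omega$, let $\CP$ consist of all Dirac measures (so the $\CP$-q.s.\ order is the pointwise order), take $\phi_\P(x):=x$, and set $\CH:=\{\widehat f\mid f\in\ell^\infty\}$, where $\widehat f:=f$ on $\N$ and $\widehat f(\infty):=\lim_{\mathcal U}f$. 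Since $\lim_{\mathcal U}$ is a lattice homomorphism, $\CH$ is a sublattice of $L^\Phi(\CP)$ containing $\ind_\Omega$ and generating $\CF$, and it is lattice-isomorphic to $\ell^\infty$, hence Dedekind $\sigma$-complete. The sequence $Z_n:=\widehat{\ind_{\{k\in\N\,:\,k\ge n\}}}$ decreases and satisfies $\inf_{n\in\N}Z_n=0$ \emph{in} $\CH$ (any lower bound $\widehat g$ forces $g\le 0$ on $\N$, whence $\widehat g(\infty)=\lim_{\mathcal U}g\le 0$), yet $Z_n(\infty)=1$ for all $n$, so $(Z_n)_{n\in\N}$ does not converge to $0$ $\CP$-q.s. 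For the nonempty convex set $\mathcal B:=\{\widehat f\in\CH\mid \widehat f(\infty)\ge 1\}$, condition (2) holds (q.s.\ convergence entails convergence at $\infty$), while $(Z_n)_{n\in\N}\subset\mathcal B$ order converges in $\CH$ to $0\notin\mathcal B$, so (1) fails: the implication (2)$\Rightarrow$(1) — and with it your proof of it — genuinely breaks at the unjustified identification. The same issue infects your converse direction, where you assert $\inf_{n}Z_n=0$ ``via the representative-wise computation of suprema and infima'': the supremum $Z_n$ computed in $\CH$ only \emph{dominates} the pointwise supremum and need not coincide with it.

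The statement is salvaged by the context in which the paper places it (it is stated there without proof): the standing theme of Section~\ref{sec:regular} is the \emph{conjunction} of Dedekind $\sigma$-completeness with $\CP\approx\ca^\sigma_+(\CH)$, and by Proposition~\ref{prop:equivalences0} this conjunction is equivalent to $\CH$ being an ideal of $L^\Phi(\CP)$. For ideals, \cite[Theorem 1.35]{AliBurk2} — applied once to $\CH$ inside $L^\Phi(\CP)$ and once to the ideal $L^\Phi(\CP)$ inside $L^0(\CP)$ — guarantees that countable suprema and infima in $\CH$ coincide with the $\CP$-q.s.\ pointwise ones, and with that regularity in hand both of your directions go through exactly as you wrote them. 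You should therefore carry the hypothesis $\CP\approx\ca^\sigma_+(\CH)$ (equivalently, the ideal property) explicitly into the proof; under Dedekind $\sigma$-completeness alone the asserted equivalence is not merely harder to prove but false.
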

Note that (2) is the direct counterpart of Fatou closedness as formulated above. 

{Hence, in the outlined situation, there is a locally convex Hausdorff topology $\tau$ on $\CH$ such that each $\tau$-closed convex set $\mathcal B\subset\CH$ is sequentially order closed. The assumptions in the preceding two observations appear to be analytic minimal requirements for a fruitful study of the (sequential) Fatou property. In that case, Proposition~\ref{prop:equivalences0} states that studying the (sequential) Fatou property only makes sense on \textit{ideals} of robust Orlicz spaces.}

However, in robust frameworks, there is a caveat concerning the Fatou property: sequences are usually not sufficient to unfold its full analytic power, and intuitive reasoning learned in dominated frameworks usually fails. We refer to \cite{Maggisetal} for a detailed discussion of this issue.

\subsection{Utility theory for multiple agents}\label{sec:utility}

{As anticipated in Section~\ref{main results and related}, robust Orlicz spaces are canonical model spaces for aggregating \emph{(variational) preferences} of a cloud of agents. This aggregation procedure is related to several economic problems.
\begin{itemize}
\item The preference relation of each of these agents may reflect the opinion of a Bayesian or non-Bayesian expert as in \cite{Amarante}, see also the references therein. Studying unanimity, i.e.\ agreement of such experts in their judgement, is crucial for appropriate theory-building.  
\item In \cite{Viability}, it is established that arbitrage is absent in a market if and only if the latter is viable (i.e.\ prices in a security market are result of an equilibrium of---potentially many---agents). In contrast to the preceding literature, Knightian uncertainty is explicitly taken into account and often forces the consideration of a whole cloud of representative agents instead of a single representative agent.   
\item Regarding the numerical representation of {\em incomplete} preferences, these usually require considering a whole cloud of agents. The incomplete preference relation in question is then interpreted as the unanimous order, cf.\ \cite{Incomplete} and the references therein. 
\end{itemize}
The preceding list is incomplete, of course.} {\em Variational preferences} encompass other prominent classes of preferences, such as multiple prior preferences of Gilboa \& Schmeidler and the multiplier preferences of Hansen \& Sargent, cf.\ the discussion in \cite{Variational}. One of their most appealing qualities is the handy separation of risk attitudes (measured by the prior-wise expected utility approach) and ambiguity or uncertainty attitudes (as expressed by the choice of the underlying set of priors and their additive penalisation). The economic problems mentioned above may require to consider more than one utility function though. 
{
\begin{assumption}Throughout this subsection we impose the following assumptions:
\begin{enumerate}[(1)]
\item $\CI$ is a nonempty set of agents.
\item Each agent $i\in\CI$ has preferences over $\mathcal L^\infty(\Omega,\CF)$, the space of bounded real-valued random variables.
\item The preferences of each agent $i\in\CI$ are captured by a utility function
 \[\mf U_i\colon\mathcal L^\infty(\Omega,\CF)\to\R,\quad f\mapsto \inf_{\P\in\CP_i}\E_\P[u_i(f)]+c_i(\P).\]
Here, $\CP_i$ is a nonempty set of probability measures on $(\Omega,\CF)$ equivalent to a reference probability measure $\P_i^*$; $c_i\colon \CP_i\to[0,\infty)$ satisfies $\inf_{\P\in\CP_i}c_i(\P)=0$; and the scalar utility function $u_i\colon \R\to\R$ is concave, nondecreasing, nontrivial in that $u_i\not\equiv 0$, and satisfies $u_i(0)=0$. 
\item For $f,g\in\mathcal L^\infty(\Omega,\CF)$, we set $f\peq_i g$ iff $\mf U_i(f)\le \mf U_i(g)$. 
\end{enumerate}
\end{assumption}
For more details on such preferences, we refer to \cite{Variational}.} By an affine transformation, we can w.l.o.g.\ assume that 
\[\mf U_i(-\ind_\Omega)=\inf_{\P\in\CP_i}u_i(-1)+c_i(\P)=-1,\quad\text{for all }i\in\CI.\]

Aggregating the preferences of all agents in $\CI$ leads to the (potentially incomplete) \emph{unanimous preference relation}
\[f\tle g\quad:\iff\quad \forall\,i\in\CI:~f\peq_ig,\]
{which obviously possesses a multi-utility representation in terms of the family $(\mf U_i)_{i\in\CI}$.}
For its closer study, we observe that for $\CP:=\bigcup_{i\in\CI}\CP_i$, $f=g$ $\CP$-q.s.\ implies $f\tle g$ and $g\tle f$, i.e., all agents are indifferent between $f$ and $g$. Hence, we may consider the preference relations $\peq_i$ on the space $L^\infty(\CP)$ instead without losing any information. The definition of $\mf U_i$ on $L^\infty(\CP)$ is immediate. 

Next, for $\P\in\CP$, consider 
\[\phi_\P(x):=\sup_{i\in\CI:\,\P\in\CP_i}\frac{-u_i(-x)}{1+c_i(\P)},\quad\text{for }x\in[0,\infty).\]
$\phi_\P$ is convex, lower semicontinuous, nondecreasing, and satisfies $\phi_\P(0)=0$. We shall impose the condition that $\phi_\P$ is an Orlicz function. Moreover, one easily obtains 
\[\phi_\P(1)\le 1,\quad\text{for all }\P\in\CP.\]
Set $\Phi:=(\phi_\P)_{\P\in\CP}$ and consider the associated robust Orlicz space which satisfies $\Linfty\subset L^\Phi(\CP)$. 

We claim that $L^\Phi(\CP)$ is a canonical maximal model space to study all individual preferences  $\peq_i$, $i\in\CI$, simultaneously. This is due to the observation that each preference relation $\peq_i$ canonically extends to a \textit{continuous} preference relation on $L^\Phi(\CP)$. Indeed, note that by concave duality, for each $i\in\CI$, there is a set $\mbf M_i\ll\CP$ of finite measures and a non-negative function $h_i:\mbf M_i\to[0,\infty)$ such that 
\[\mf U_i(X)=\inf_{\mu\in\mbf M_i}\int X\,{\rm d}\mu+h_i(\mu),\quad\text{for all }X\in\Linfty.\]
Moreover, one can show that, for each $i\in\CI$, all $X\in\Linfty$, and all $\mu\in\mbf M_i$, 
\[-1\le \mf U_i\left(-\|X\|_{L^\Phi(\CP)}^{-1}|X|\right)\le-\|X\|_{L^\Phi(\CP)}^{-1}\int|X|\,{\rm d}\mu+h_i(\mu).\]
From this, we infer 
\[\int |X|\,{\rm d}\mu\le (1+h_i(\mu))\|X\|_{L^\Phi(\CP)},\quad\text{for all }X\in\Linfty,\,i\in\CI,\text{ and }\mu\in\mbf M_i.\]
By monotone convergence, the same estimate holds for all $X\in L^\Phi(\CP)$. Hence, setting
\[\mf U_i^\sharp(X):=\inf_{\mu\in\mbf M_i}\int X\,{\rm d}\mu+h_i(\mu),\quad\text{for all }X\in L^\Phi(\CP),\]
and 
\[X\peq_i Y\quad:\iff\quad\mf U_i^\sharp(X)\le\mf U_i^\sharp(Y),\quad\text{for }X,Y\in L^\Phi(\CP),\]
we have extended the initial preference relations to $L^\Phi(\CP)$ in a continuous manner. 

In case that all agents have the same attitude towards \textit{risk}, i.e., the utility function $u_i$ does not depend on $i$, it is straightforward to construct examples where Theorem~\ref{thm:equality} is applicable and we have the identity 
\[L^\Phi(\CP)=\mf L^\Phi(\CP),\]
i.e., $L^\Phi(\CP)$ is the model space for the minimal agreement among all agents under consideration on which well-defined utility can be attached to all objects.\footnote{~A situation where $u_i$ does not depend on $i$ is, for instance, the aggregation of opinions of {\em Bayesian} experts in \cite{Amarante}.}

\section{Conclusion}\label{sec.conclusion}
 Our results highlight both the advantages and the cost of taking a top-down or a bottom-up approach to robust Orlicz spaces, respectively. Whereas the former may lack good dual behaviour, it has reasonable order completeness properties and reflects the full nondominated nature of the underlying uncertainty structure. The latter may be handy analytically, but either ignores the nondominated uncertainty structure \textit{a posteriori}, or tends to lead to a complete breakdown of almost all lattice properties.

\begin{appendix}

\section{Proofs of Section \ref{sec.orlicz}}\label{app.A}

\begin{proof}[Proof of Proposition~\ref{prop:Banach}]
The fact that $L^\Phi(\CP)$ is an ideal of $L^0(\CP)$ follows directly from the fact that each $\phi_\P$ is nondecreasing and convex and the fact that the supremum is subadditive. This proves (2). 
 
In order to prove (1), the ideal property of $L^\Phi(\CP)\subset L^0(\CP)$ and Dedekind $\sigma$-completeness of the latter space with respect to the $\CP$-q.s.\ order imply that $L^\Phi(\CP)$ is Dedekind $\sigma$-complete. In a similar way, it follows that $\Norm_{L^\Phi(\CP)}$ defines a norm on $L^\Phi(\CP)$. 
Let $(X_n)_{n\in\N}$ be a Cauchy sequence. Notice that, since $\phi_\P$ is convex and nontrivial for all $\P\in \CP$, there exist $a_\P> 0$ and $b_\P\geq 0$ such that
 \begin{equation}\label{eq.convlin}
  \phi_\P(x)\geq(a_\P x-b_\P)^+,\quad \text{for all }x\geq 0.
 \end{equation}
 By possibly passing to a subsequence, we may assume that 
 \[\|X_n-X_{n+1}\|_{L^\Phi(\CP)}<4^{-n},\quad \text{for all }n\in\N.\]
For all $n\in \N$, let $\la_n>0$ with $\|X_n-X_{n+1}\|_{L^\Phi(\CP)}< \la_n\leq 4^{-n}$. In particular, $\lambda_n^{-1}2^{-n}\ge 2^n$, i.e.\ we can fix $n_\P\in\N$ such that $a_\P\lambda_n^{-1}2^{-n}-b_\P>0$ holds for all $n\ge n_\P$.
Markov's inequality together with equation \eqref{eq.convlin} shows, for all $\P\in\CP$,
 \begin{align*}
  \sum_{n=n_\P}^\infty\P\big(|X_n-X_{n+1}|\ge 2^{-n}\big)&\le\sum_{n=n_\P}^\infty\P\left(\big(a_\P(\la_n^{-1}|X_n-X_{n+1}|)-b_\P\big)^+\ge \big(a_\P\la_n^{-1} 2^{-n}-b_\P\big)^+\right)\\
  &\leq \sum_{n=n_\P}^\infty\left(a_\P2^n-b_\P\right)^{-1}\E_\P\left[\phi_\P\left(\la_n^{-1}|X_n-X_{n+1}|\right)\right]\\
  &\leq \sum_{n=n_\P}^\infty\frac{1}{a_\P2^n-b_\P}<\infty.
 \end{align*}
Applying the Borel-Cantelli Lemma yields that
 \[\P\big(|X_n-X_{n+1}|\leq 2^{-n}\tn{ eventually}\big)=1,\quad\tn{for all }\P\in\CP.\]
 Hence, the event $\Omega^*:=\{\lim_{n\to \infty} X_n\tn{ exists in }\R\}\in\CF$ satisfies $\P(\Omega^*)=1$ for all $\P\in\CP$. We set $X$ to be (the equivalence class in $L^0(\CP)$ induced by) $\limsup_{n\to\infty}X_n$. Now, let $\P\in \CP$ and $\al>0$ be arbitrary. Choose $k\in\N$ such that $\sum_{i\ge k}\lambda_i\al\le 1$.  
 For $l>k$, we can estimate 
 \begin{align*}
  \phi_\P(\al|X_{n_k}-X_{n_l}|)&\le\phi_\P\left(\sum_{i=k}^{l-1}\al|X_{n_{i+1}}-X_{n_i}|\right)\le\sum_{i=k}^{l-1}\lambda_i\al\phi_\P\left(\lambda_i^{-1}|X_{n_{i+1}}-X_{n_i}|\right)\le\sum_{i=k}^\infty\lambda_i\al.
 \end{align*}
 Notice that the last bound is uniform in $l$ and $\P$. Letting $l\to\infty$ and using lower semicontinuity of $\phi_\P$, 
 \begin{align*}
  \phi_\P(\al|X_{n_k}-X|)&\le\sum_{i=k}^\infty\lambda_i\al.
 \end{align*}
 This implies
 \[\limsup_{k\to\infty}\sup_{\P\in\CP}\E_\P[\phi_\P\left(\al|X_{n_k}-X|\right)]\le \lim_{k\to\infty}\sum_{i=k}^\infty\lambda_i\al=0.\]
 As $\al>0$ was arbitrary, $X\in L^\Phi(\CP)$ and $\lim_{k\to\infty}\|X_k-X\|_{L^\Phi(\CP)}=0$ follow. 
 
 For (3), let $X\in L^\Phi(\CP)$. By lower semicontinuity of $\phi_\P$ and Fatou's Lemma, for all $\P\in \CP$,
 \[
  \E_\P\big[a_\P\tfrac{|X|}{\|X\|_{L^\Phi(\CP)}}-b_\P\big]\leq \E_\P\big[\phi_\P(\|X\|_{L^\Phi(\CP)}^{-1} |X|)\big]\leq 1,
 \]
 showing that $\E_\P[|X|]\leq \frac{1+b_\P}{a_\P}\|X\|_{L^\Phi(\CP)}$, that is, \eqref{eq:normP}. At last, (4) is a direct consequence of~\eqref{eq:normP}.
\end{proof}

\begin{proof}[Proof of Theorem~\ref{thm:L1}]
{(1) implies (2):}
 Suppose $\Linfty\subset L^\Phi(\CP)$. Then, we can find some $\al>0$ such that 
 \[\sup_{\P\in\CP}\phi_\P(\al)= \sup_{\P\in\CP}\E_\P[\phi_\P(\al\ind_\Omega)]\leq 1.\]
 {(2) implies (3): Let} $\al>0$ with $\phi_\Max(\al)=\sup_{\P\in \CP}\phi_\P(\al)< \infty$. Since $\phi_\Max$ is convex, we may w.l.o.g.\ assume that $\phi_\Max(\al)\le 1$. For $\P\in \CP$ and $\mu\in \ca_+(\P)$, let
 \[
  \|\mu\|_{\P}':=\sup\bigg\{\int|X|\,{\rm d}\mu\,\bigg|\, \|X\|_{L^{\phi_\P}(\P)} =1\bigg\}.
 \]
 Then, by \cite[Theorem 2.6.9 \& Corollary 2.6.6]{MeyNie},\footnote{~The cases $L^{\phi_\P}(\P)\in \{L^1(\P),L^\infty(\P)\}$ are not treated in this reference, but equation~\eqref{eq:koethe} is well known for them.}
 \begin{equation}\label{eq:koethe}
  \|X\|_{L^{\phi_\P}(\P)}=\sup\bigg\{\int|X|\,{\rm d}\mu\, \bigg| \, \mu\in\ca_+(\P),\, \|\mu\|_{\P}' = 1\bigg\},\quad \text{for all }\P\in \CP\text{ and }X\in L^{\phi_\P}(\P).
 \end{equation}
 Since $\sup_{\P\in \CP}\phi_\P(\al)\leq 1$, $\|\ind_\Omega\|_{L^{\phi_\P}(\P)}\le\alpha^{-1}$. Hence, for all $\mu\in\ca_+(\P)$ with $\|\mu\|_\P'=1$, 
 \begin{equation}\label{eq:bound1}
  \mu(\Omega)=\|\ind_\Omega\|_{L^{\phi_\P}(\P)}\int\Big(\|\ind_\Omega\|_{L^{\phi_\P}(\P)}\Big)^{-1}\ind_\Omega\,{\rm d}\mu\le\frac{1}\al.
 \end{equation}
 For $\P\in \CP$, let 
 \[\CQ_\P:=\Big\{\tfrac{1}{\mu(\Omega)}\mu\,\Big|\,\mu\in\ca_+(\P),\,\|\mu\|_\P'=1\Big\}.\]
 By \eqref{eq:normP}, $\P\in\CQ_\P$ holds for all $\P\in\CP$. 
 We also define 
 \[\CQ:=\big\{\QW\in \ca_+^1(\CP)\,\big|\, \exists\,\P\in\CP:~\QW\in\CQ_\P\big\}.\]
 Fix $\Q\in\CQ$, let $\P\in\CP$ such that $\Q\in\CQ_\P$, and let $\mu\in\ca_+(\P)$ such that $\Q=\mu(\Omega)^{-1}\mu$. Then, \eqref{eq:bound1} implies that
 \[\|\Q\|_\P'=\mu(\Omega)^{-1}\ge\al.\]
 The function
  \[\theta(\QW):=\frac{1}{\inf_{\P\in\CP\colon \QW\in\CQ_\P}\|\Q\|_\P'},\quad \text{for }\Q\in\CQ,\]
is thus bounded and takes positive values. Moreover, for $X\in L^0(\CP)$,
 \begin{align*}
  \|X\|_{L^\Phi(\CP)}&=\sup_{\P\in\CP}\|X\|_{L^{\phi_\P}(\P)}=\sup_{\P\in\CP}\sup_{\Q\in\CQ_\P}\frac{1}{\|\Q\|_\P'}\E_\Q[|X|]\\
  &=\frac{1}{\inf_{\P\in\CP\colon \QW\in\CQ_\P}\|\Q\|_\P'}\sup_{\Q\in\CQ}\E_\Q[|X|]=\sup_{\Q\in\CQ}\theta(\Q)\E_\Q[|X|].
 \end{align*}
 (4) is a direct consequence of (3) if we set $\kappa:=\sup_{\Q\in \CQ} \theta(\Q)$ or, equivalently, $\kappa:=\|\ind_\Omega\|_{L^\Phi(\CP)}$.
 
 (4) clearly implies (1). 
\end{proof}

\begin{proof}[Proof of Theorem \ref{thm.main}]
The separability of $\CH$ implies that the unit ball $\big\{\ell\in\CH^*\, \big|\, \|\ell\|_{\CH^*}\le 1\big\}$ endowed with the weak* topology is compact, metrisable, and thus separable, cf.~\cite[Theorem 3.16]{MR1157815}. Hence, the set
\[
 \big\{ \tfrac1{\|\P\|_{\CH^*}}\P\, \big|\, \P\in \CP\big\}\subset \big\{\ell\in\CH^*\, \big|\, \|\ell\|_{\CH^*}\le 1\big\}
\]
 is separable, and there exists a sequence $(\P_n)_{n\in \N}$ such that, for all $X\in \CH$, $\sup_{n\in \N} \E_{\P_n}[|X|]>0$ holds if and only if $X\neq 0$.
Consider the measure 
\[\mu^*:=\sum_{n\in \N}2^{-n}\min\{1,\|\P_n\|_{\CH^*}^{-1}\}\P_n\in\CH^*,\]
which satisfies $\mu^*(\Omega)\le 1$. For $s>0$ appropriately chosen, the probability measure $\P^*:=s\mu^*\in\CH^*$ is a countable convex combination of $(\P_n)_{n\in\N}$, and the functional $\P^*$ is strictly positive by construction. Hence, for $X,Y\in \CH$, $X\peq Y$ if and only if $\E_{\P^*}[(Y-X)^-]=0$, which immediately proves that the canonical projection $J_{\P^*}\colon \CH\to L^1(\P^*)$ is injective. By construction, we see that $\P^*\in \CP$ if $\CP$ is countably convex.
\end{proof}

\begin{proof}[Proof of Corollary \ref{cor.main}]
 As in the proof of the previous theorem, we see that the set
\[
 \big\{ \theta (\Q)\cdot \Q \, \big|\, \Q\in \CQ\big\}\subset \big\{\ell\in\CH^*\, \big|\, \|\ell\|_{\CH^*}\le 1\big\}
\]
is separable with respect to the relative weak* topology. Hence, there exists a countable family $(\Q_n)_{n\in \N}\subset \CQ$ such that, for all $X\in \CH$,
\[
 \sup_{n\in \N} \theta (\Q_n)\|X\|_{L^1(\Q_n)}=\sup_{\Q\in \CQ} \theta (\Q)\|X\|_{L^1(\Q)}=\|X\|_{L^\Phi(\CP)}.
\]
\end{proof}

\begin{proof}[Proof of Proposition \ref{prop:equality}]
(2) clearly implies (1). Now suppose that (1) holds. By Theorem~\ref{thm:L1}, we have 
 \[
  \|X\|_{L^\Phi(\CP)}=\sup_{\mu\in\mf D}\int|X|\,{\rm d}\mu,
 \]
 where $\mf D:=\{\mu\in\ca_+(L^\Phi(\CP))\mid \|\mu\|_{L^\Phi(\CP)^*}\le 1\}$. In particular, $\sup_{\mu\in\mf D}\mu(\Omega)<\infty$ holds because of the assumption $\Linfty\subset L^\Phi(\CP)$. Set 
 \begin{align*}
 \mf R&:=\{\mu(\Omega)^{-1}\mu\mid \mu\in\mf D\}\subset\ca^1_+(L^\Phi(\CP)),\\
 \psi_\Q(x)&:=\mu(\Omega)x,~x\ge 0,~\tn{for }\Q=\mu(\Omega)^{-1}\mu\in \mf R,\\
 \Psi&=(\psi_\Q)_{\Q\in\mf R}.
 \end{align*}
 Then, $L^\Phi(\CP)\subset\mf L^\Psi(\mf R)$ holds by construction. Suppose now that $X\in L^0(\CP)\setminus L^\Phi(\CP)$. Then, we must be able to find a sequence $(\mu_n)_{n\in\N}\subset\mf D$ such that $\mu_n|X|\ge 2^{n}$, $n\in\N$. By the Banach space property of $L^\Phi(\CP)^*$, $\mu^*:=\sum_{n=1}^\infty2^{-n}\mu_n\in\mf D$, and we observe 
 \[
  \int|X|\,{\rm d}\mu^*=\sum_{n=1}^\infty2^{-n}\mu_n|X|\ge\sum_{n=1}^\infty 1=\infty.
 \]
 This completes the proof of the identity $L^\Phi(\CP)=\mf L^\Psi(\mf R)$. \\
 Consider now the special case of $\CP$ being countably convex and \eqref{eq.thm:equality} being satisfied. 
 Observe that, for all $\al>0$, $\P\in\CP$, and all $X\in L^0(\CP)$, 
\[\E_\P[\phi_\P(\al|X|)]\le\E_\P[\phi_\Max(\al|X|)]\le\E_\P[\phi_\P(\al c_\P|X|)].\]
If we set $\Psi=(\phi_\Max)_{\P\in\CP}$, this is sufficient to prove the following chain of inclusions:
\[L^\Psi(\CP)\subset L^\Phi(\CP)\subset\mf L^\Phi(\CP)=\mf L^\Psi(\CP).\]
The proof is complete if we can show $\mf L^\Psi(\CP)\subset L^\Psi(\CP)$. To this end, 
let $X\in L^0(\CP)\setminus L^\Phi(\CP)$. Then, there exists a sequence $(\P_n)_{n\in \N}\subset \CP$ with
 \[
  \|X\|_{L^{\phi_{\Max}}(\P_n)}>2^nn,\quad \text{for all }n\in \N.
 \]
 Define $\P:=\sum_{n\in \N}2^{-n}\P_n\in\CP$ (because $\CP$ is countably convex), and let $s>0$ be arbitrary. Then, 
 \[
\E_\P[\phi_\Max(s|X|)]=\sum_{n=1}^\infty 2^{-n}\E_{\P_n}\left[\phi_{\Max}(s|X|)\right]\ge\sum_{n=1}^\infty \E_{\P_n}\left[\phi_{\Max}(2^{-n}s|X|)\right]=\infty,
 \]
 which proves that $X\notin\mf L^\Psi(\CP)$.
\end{proof}

\begin{proof}[Proof of Theorem \ref{thm:equality}]
Let $X\in L^0(\CP)\setminus L^\Phi(\CP)$.
Then, there is a sequence 
$(\P_n)_{n\in\N}\subset\CP$ such that, for all $n\in\N$, $\|X\|_{L^{\phi_{\P_n}}(\P_n)}>2^{2n}$, which in particular entails
\[\E_{\P_n}[\phi(\theta(\P_n)2^{-n}|X|)]>2^n(1+\gamma(\P_n)).\]
Fix $\P^*\in\CP$ and consider the measure
\[\Q:=\sum_{n=1}^\infty2^{-n}\big(\tfrac{\gamma(\P_n)}{1+\gamma(\P_n)}\P^*+\tfrac{1}{1+\gamma(\P_n)}\P_n\big).\]
By convexity of $\gamma$ and the countable convexity of its lower level sets, $\gamma(\Q)\le\gamma(\P^*)+1$, $n\in\N$. For $\al>0$ arbitrary, set $I:=\{n\in\N\mid \theta(\Q)\al\ge \theta(\P_n)2^{-n}\}$, an infinite set. Then, 
\begin{align*}\E_{\Q}[\phi(\theta(\Q_n)\alpha|X|)]&\ge \sum_{n\in I}\tfrac{1}{2^n(1+\gamma(\P_n)}\E_{\P_n}[\phi(\theta(\P_n)2^{-n}|X|)]=\infty.
\end{align*}
This proves $\|X\|_{L^{\phi_{\Q}}(\Q)}=\infty$, which means $X\notin\mf L^\Phi(\CP)$. 
\end{proof}

\section{Proofs of Section \ref{sec.completion}}\label{app.B}

\begin{proof}[Proof of Lemma~\ref{lem:sigmaorder}]
Let $\mathcal Y$ denote the real vector space of all $\sigma$-order continuous linear functionals on $\CH$. As $\CH$ is a vector lattice, $\CY$ is a vector lattice itself when endowed with the order 
\[\ell\peq^*\ell'\quad:\iff\quad \forall\,X\in\CH,\,X\peq 0:~\ell(X)\le \ell'(X),\]
cf.\ \cite[Theorem 1.57]{AliBurk2}. As such, for each $\ell\in\CY$ there are unique $\ell^+,\ell^-\succeq^* 0$ such that $\ell=\ell^+-\ell^-$. We may hence assume for the moment that $\ell\succeq^* 0$. \\
Then, for each sequence $(X_n)_{n\in\N}\subset\CH$ possessing representatives $(f_n)_{n\in\N}$ such that $f_n\downarrow 0$ holds pointwise, $\inf_{n\in\N}X_n=0$ holds in $\CH$. Consider the vector lattice
\[\CL:=\big\{f\in\CL^0(\Omega,\CF)\, \big|\, [f]\in\CH\big\}\]
and the linear functional $\ell_0\colon\CL\to\R$ defined by $\ell_0(f)=\ell([f])$. Then, $\ell_0(f_n)\downarrow 0$ for all sequences $(f_n)_{n\in\N}\subset\CL$ such that $f_n\downarrow 0$ pointwise.
Since, by our assumption on $\CH$, $\CF=\sigma(\CL)$, \cite[Theorem 7.8.1]{Bogachev} provides a unique finite measure $\mu$ on $(\Omega,\CF)$ such that 
\[\ell_0(f)=\int f\,{\rm d}\mu,\quad\text{for all } f\in \CL.\]
As $|f|\in\CL$ for all $f\in\CL$, each $f\in\CL$ is $\mu$-integrable. Moreover, for all $X\in\CH$ and $f,g\in X$, 
\[\int f\,{\rm d}\mu=\ell_0(f)=\ell(X)=\ell_0(g)=\int g\,{\rm d}\mu.\]
In particular, considering that $\ind_N\in\CL$ for all $N\in\CF$ satisfying $\sup_{\P\in\CP}\P(N)=0$, $\mu\in\ca_+(\CP)$ follows.\\
Finally, for a general $\ell\in\CY$, let $\nu,\eta\in\ca_+(\CP)$ be the finite measures corresponding to $\ell^+$ and $\ell^-$, respectively. Setting $\mu:=\nu-\eta$, we obtain for all $X\in\CH$ that
\[\ell(X)=\ell^+(X)-\ell^-(X)=\int X\,{\rm d}\nu-\int X\,{\rm d}\eta=\int X\,{\rm d}\mu.\]
Moreover, the total variation measure $|\mu|$ satisfies $\int |f|\,{\rm d}|\mu|\le \int |f|\,{\rm d}(\nu+\eta)<\infty$, $f\in\CL$. \\
At last, suppose that the representing signed measure of $\ell\in\CY$ is a measure. Then, $\ell\succeq^* 0$ holds automatically, and the proof is complete. 
\end{proof}

\begin{proof}[Proof of Lemma~\ref{lem:representation00}]
Let $\ell\in\CH^*$. In order to verify $\ell\in\ca^\sigma(\CH)$, let $X,Y\in\CH$. Then, all $Z\in\CH$ with the property $X\peq Z\peq Y$ satisfy $|Z|\le X^-+Y^+$. We obtain 
\begin{align*}\sup\{\ell(Z)\mid X\peq Z\peq Y\}&\le\sup\{\|\ell\|_{\CH^*}\|Z\|_{L^\Phi(\CP)}\mid X\peq Z\peq Y\}\\
&\le\|\ell\|_{\CH^*}\big(\|X^-\|_{L^\Phi(\CP)}+\|Y^+\|_{L^\Phi(\CP)}\big)<\infty.
\end{align*}
This gives condition (i) in Definition~\ref{def:sigmaorder}. The validity of condition (ii) is a direct consequence of the lattice norm property and $\sigma$-order continuity of $\Norm_{L^\Phi(\CP)}$ on $\CH$. The inclusion $\CH^*\subset\ca^\sigma(\CH)$ together with Lemma~\ref{lem:sigmaorder} implies {$\CH^*=\ca(\CH)$}.
\end{proof}

\begin{proof}[Proof of Proposition~\ref{lem:representation0}]
By \cite[Proposition 1.2.3(ii)]{MeyNie}, the closure $(\CC,\peq)$ of the sublattice $(\CH,\peq)$ of $L^\Phi(\CP)$ is a sublattice as well. As $\Norm_{L^\Phi(\CP)}$ is a lattice norm on $\CC$, $\big(\CC,\peq,\Norm_{L^\Phi(\CP)}\big)$ is a Banach lattice by construction.\\
The inclusion $\ca(\CC)\subset\ca(\CH)$ is trivial. For the converse inclusion, let $\mu \in \ca(\CH)$, i.e., $|\mu|\in\CH^*$.
Since $\CH$ is dense in $\CC$, there exists a unique $\ell\in \CC^*$ with
$$\ell(X)=\int X\,{\rm d}|\mu|,\quad \text{for all }X\in \CH.$$
Let $X\in \CC\cap \Linfty$. Then, by Proposition \ref{prop:Banach}, there exists a sequence $(X_n)_{n\in \N}\subset \CH\cap \Linfty$ with $\sup_{n\in \N}\|X_n\|_{\Linfty}<\infty$, $\|X-X_n\|_{L^\Phi(\CP)}\to 0$, and $X_n\to X$ $\CP$-q.s.~as $n\to \infty$. Since $|\mu|\ll \CP$, dominated convergence implies
\[
 \int X\,{\rm d}|\mu|=\lim_{n\to \infty}\int X_n\,{\rm d}|\mu|=\lim_{n\to \infty}\ell (X_n)=\ell (X).
\]
 Now, let $X\in \CC$ arbitrary. Then, 
\begin{align}\begin{split}\label{eq:approximation}\int |X|\,{\rm d}|\mu| &= \sup_{n\in \N}\int \big(|X|\wedge n\ind_\Omega\big)\,{\rm d}|\mu|=\sup_{n\in \N}\ell\big(|X|\wedge n\ind_\Omega\big)\\
&\leq \sup_{n\in \N}\|\ell\|_{\CC^*}\big\||X|\wedge n\ind_\Omega\big\|_{L^\Phi(\CP)}\leq \|\ell\|_{\CC^*}\|X\|_{L^\Phi(\CP)}.
\end{split} \end{align}
From this observation, the equality $|\mu|=\ell$ follows, which is sufficient to prove that $\mu\in \ca(\CC)$. \\
The remaining assertions easily follow with $\CC^*=\CH^*$ and Lemma~\ref{lem:representation00}.
\end{proof}

\begin{proof}[Proof of Lemma \ref{lem.Linfty}]
{(2) equivalent to (3):} For $X\in L^\Phi(\CP)$, notice that
 \[
  \sup_{\P\in \CP}\E_\P\big[\phi_\P(\al |X|)\mbf 1_{\{|X|> n\}}\big]\to 0,\quad \text{as } n\to \infty
 \]
 for all $\al>0$ is equivalent to $\|X\mbf 1_{\{|X|> n\}}\|_{L^\Phi(\CP)}\to 0$ as $n\to \infty$.
 
 {(3) implies (1):
 Set $X_n:=X\ind_{\{|X|\le n\}}\in\Linfty$, $n\in\N$. If $\|X\mbf 1_{\{|X|> n\}}\|_{L^\Phi(\CP)}\to 0$ as $n\to \infty$, it follows that $\|X-X_n\|_{L^\Phi(\CP)}\to 0$ as $n\to \infty$.}

{(1) implies (3): Assume that $X\in \cl\big(\Linfty\big)$.} Let $(Y_n)_{n\in \N}\subset \Linfty$ with $\|X-Y_n\|_{L^\Phi(\CP)}\to 0$ as $n\to \infty$. Let $X_m:=(X\wedge m\ind_\Omega)\vee (-m\ind_\Omega)$ for all $m\in \N$. Then, for all $m,n\in \N$ with $m\geq\|Y_n\|_{L^\infty(\CP)}$, it follows that
 \[
  |X-X_m|\leq |X-Y_n|,
 \]
 which implies that $\|X-X_m\|_{L^\Phi(\CP)}\to 0$ as $m\to \infty$. Finally notice that
 \begin{align*}
  |X|\mbf1_{\{|X|>2m\}}&=(|X|-m\ind_\Omega)\mbf 1_{\{|X|>2m\}}+m \mbf 1_{\{|X|>2m\}}\leq 2\big(|X|-m\ind_\Omega\big)\mbf 1_{\{|X|>2m\}}\\
  &\leq 2\big(|X|-m\ind_\Omega\big)\mbf 1_{\left\{|X|>m\right\}}= 2|X-X_m|,
 \end{align*}
 which shows that $\|X\ind_{\{|X|>m\}}\|_{L^\Phi(\CP)}\to 0$ as $m\to \infty$.
\end{proof}

For the sake of clarity, we give the proofs of Lemma~\ref{lem:monclass}, Proposition~\ref{prop:equivalences0}, and Proposition~\ref{prop:equivalences} in advance of Theorem~\ref{thm.super}.

\begin{proof}[Proof of Lemma~\ref{lem:monclass}]
 Let $X\in\CH$ and $c\in\R$. Consider $Y_k:=k(X-c\ind_\Omega)^+\wedge\ind_\Omega\in\CH$, $k\in\N$. The sequence $(Y_k)_{k\in\N}$ is nondecreasing and satisfies $0\peq Y_k\peq \ind_\Omega$. By monotone convergence, 
\begin{equation}\label{eq:1}\mu(\{X>c\})=\sup_{k\in\N}\int Y_k\,{\rm d}\mu=\lim_{k\to\infty}\int Y_k\,{\rm d}\mu\end{equation}
holds for all $\mu\in\ca_+(\CP)$. Moreover, by Dedekind $\sigma$-completeness of $\CH$, $U:=\sup_{k\in\N}Y_k$ exists and lies in $\CH_+$. \textit{A priori}, $\ind_{\{X>c\}}\peq U$ has to hold. Moreover, one can show that $U=(nU)\wedge\ind_\Omega$ holds for all $n\in\N$. Hence, there is an event $B\in\CF$ such that $\ind_B=U$ in $\CH$.  For each $\mu\in\ca_+^\sigma(\CH)$, 
\begin{equation}\label{eq:4}\lim_{k\to\infty}\int Y_k\,{\rm d}\mu=\mu(B).\end{equation}
Equations \eqref{eq:1} and \eqref{eq:4} together with $\CP\approx \ca_+^\sigma(\CH)$ now imply that $\ind_{\{X>c\}}=U\in\CH$, that is, for every $X\in\CH$, $f\in X$, and $c\in\R$, the equivalence class generated by $\ind_{\{f>c\}}$ lies in $\CH$. 
At last, consider the $\pi$-system $\Pi:=\{\{f>c\}\, |\,  X\in\CH,\,f\in X,\,c\in\R\}$, which generates $\CF$ and is a subset of 
\[\Lambda:=\{A\in\CF\, |\, \ind_A\in\CH\}.\]
Since $\CH$ is Dedekind $\sigma$-complete and $\CP\approx \ca_+^\sigma(\CH)$, the latter can be shown to be a $\lambda$-system. By Dynkin's Lemma, it follows that $\Lambda=\CF$. We have thus shown that $\CH$ contains all representatives of $\CF$-measurable simple functions. Each $X\in\Linfty$ is the supremum of a countable family of simple functions in $L^\Phi(\CP)$. As $\CH$ is Dedekind $\sigma$-complete and $\CP\approx \ca_+^\sigma(\CH)$, we conclude that $\Linfty\subset \CH$.
\end{proof}

\begin{proof}[Proof of Proposition \ref{prop:equivalences0}]
{As $\CP\subset\ca(\CH)$}, (2) clearly implies (1). In order to see that (1) implies (3), note first that $\Linfty\subset\CH$ holds by Lemma~\ref{lem:monclass}. Now let $X\in L^\Phi(\CP)$, $Y\in\CH$, and assume $0\peq X\peq Y$ holds. The set $\{X\wedge n\ind_\Omega\mid n\in\N\}\subset\CH$ is order bounded above by $Y$ in $\CH$. By Dedekind $\sigma$-completeness, $X^*:=\sup_{n\in\N}X\wedge n\ind_\Omega$ exists in $\CH$ and satisfies $X\peq X^*$ \textit{a priori}. Arguing as in Lemma~\ref{lem:monclass}, one verifies $X=X^*\in\CH$. \\
In order to see that (3) implies (2), we first show that $\CH$ is Dedekind $\sigma$-complete. Let $\mathcal D\subset\CH$ be order bounded from above and countable. Since $L^\Phi(\CP)$ is Dedekind $\sigma$-complete, $U:=\sup\mathcal D$ exists in $L^\Phi(\CP)$. Let $Y\in\CH$ be any upper bound of $\mathcal D$ and $X\in\mathcal D$. Then, $X\peq U\peq Y$. As $\CH$ is an ideal in $L^\Phi(\CP)$, $U\in \CH$ has to hold and we have proved that $\CH$ is Dedekind $\sigma$-complete.\\ 
Now we prove that each $\mu \in \ca(\CH)$ is $\sigma$-order continuous. For condition (i) in Definition~\ref{def:sigmaorder}, we can argue as in the proof of Lemma~\ref{lem:representation00}. For condition (ii), let $(X_n)_{n\in \N}\in \CH$ be a sequence with $X_{n+1}\peq X_n$ for all $n\in \N$ and $\inf_{n\in \N} X_n=0$ in $\CH$. By \cite[Theorem 1.35]{AliBurk2}, $\inf_{n\in\N}X_n=0$ holds in $L^\Phi(\CP)$, which is equivalent to $\inf_{\P\in\CP}\P(X_n\downarrow 0)=1$.
Moreover, by definition of $\ca(\CH)$, $\int X_1\,{\rm d}|\mu|<\infty$. Dominated convergence yields 
$$\lim_{n\to\infty}\int X_n\,{\rm d}\mu=\lim_{n\to\infty}\int\Big(\tfrac{{\rm d}\mu}{{\rm d}|\mu|}X_n\Big)\,{\rm d}|\mu|=0.$$
Now assume that, additionally, $\CH\subset\Linfty$. If (3) holds, $\CH$ is an ideal containing the equivalence class of $\ind_\Omega$ and must therefore also be a superset of $\Linfty$. Trivially, (4) implies (3), and the proof is complete.  
\end{proof}

\begin{proof}[Proof of Proposition \ref{prop:equivalences}]
The equivalence of (1)--(3) follows directly from Proposition~\ref{prop:equivalences0} up to two additional observations: $\CC$ is a Banach lattice by Proposition~\ref{lem:representation0}, and therefore each element of $\ca^\sigma(\CC)$ is a continuous linear functional by \cite[Proposition 1.3.7]{MeyNie}.
If $\CH\subset\Linfty$, $\CC\subset\cl(\Linfty)$ must hold, and the converse inclusion is a direct consequence under (3). (4) implies (3) because $\Linfty$ is an ideal and norm closures of ideals in Banach lattices remain ideals (\cite[Proposition 1.2.3(iii)]{MeyNie}). 
\end{proof}

\begin{proof}[Proof of Theorem \ref{thm.super}]
(1) is equivalent to (2): Theorem~\ref{thm.main} provides a strictly positive linear functional in the present situation. Hence, the equivalence of (1) and (2) follows with \cite[Lemma A.3]{nend20}.\\
(1) implies (3): Under assumption (1), $\CC$ is a separable and Dedekind $\sigma$-complete Banach lattice. From \cite[Corollary 4.52]{AliBurk2}, we deduce that $\Norm_{L^\Phi(\CP)}$ is $\sigma$-order continuous on $\CC$. Now, in view of Lemma \ref{lem:sigmaorder} and \cite[Proposition 1.3.7]{MeyNie}, $\sigma$-order continuity of the norm on $\CC$ shows
$$\CC^*=\ca(\CC)=\ca^\sigma(\CC).$$
In particular, each $\P\in\CP$ satisfies $\P\in\ca_+^\sigma(\CC)$.
Lemma~\ref{lem:monclass} implies $\Linfty\subset\CC$, which entails that, for all $X\in\CC$, $|X|\ind_{\{|X|\le n\}}\uparrow|X|$ as $n\to\infty$, both in order and in norm. This proves $\CC=\cl(\Linfty)$, which is (3). 

(3) always implies (4). 

(4) implies (1): This has been demonstrated already in the proof of Proposition~\ref{prop:equivalences}.

(1)--(4) implies (5): Note that the equivalent assertions (1)--(4) have already been demonstrated to imply $\CC^*=\ca(\CC)\supset\ca(L^\Phi(\CP)$. For the converse inclusion $\ca(\CC)\subset\ca(L^\Phi(\CP))$, {we can argue as in \eqref{eq:approximation} to see that every $\mu\in\ca(\CC)$ satisfies 
\[\int |X|\,{\rm d}|\mu|\le\|\mu\|_{\CC^*}\|X\|_{L^\Phi(\CP)},\quad \text{for all }X\in L^\Phi(\CP).\]
This means that $|\mu|$ (or equivalently, $\mu$) lies in $\ca(L^\Phi(\CP))$.} Finally, let $\P^*\in \ca\big(L^\Phi(\CP))$ as in Theorem \ref{thm.main} and let $A\in\CF$. $\ind_A\in \CC$ is implied by (3), and it follows that $\ca(L^\Phi(\CP))\approx \CP\approx \P^*$.\\ 
In order to see that the densities of measures in the unit ball of $\ca(\CC)$ form a weakly compact subset of $L^1(\P^*)$, note that \eqref{eq:approximation} admits the representation
\[\{\mu\in\ca(\CC)\mid \|\mu\|_{\CC^*}\le 1\}=\big\{\mu\in\ca(\CP)\,\big|\,\forall X\in L^\infty(\P^*)\colon \big|\smallint X\,{\rm d}\mu\big|\le \|X\|_{L^\Phi(\CP)}\big\}.\]
The right-hand side is clearly weakly closed in $L^1(\P^*)$.\\
Now we consider a sequence $(A_n)_{n\in\N}$ such that 
\[\P^*(A_n)\le 2^{-n}\text{ and }\|\ind_{A_n}\|_{L^\Phi(\CP)}\ge \tfrac 1 2 \sup\{\|\ind_B\|_{L^\Phi(\CP)}\mid B\in\CF,\,\P^*(B)\le 2^{-n}\}.\]
$(\ind_{A_n})_{n\in\N}$ is a sequence in $\CC$ converging to 0 in order, and $\lim_{n\to\infty}\|\ind_{A_n}\|_{L^\Phi(\CP)}=0$.\footnote{~More precisely, set $B_n:=\bigcup_{k\ge n}A_k$, a decreasing sequence of events. As $\P^*(B_n)\downarrow 0$, $\ind_{B_n}\downarrow 0$ holds w.r.t.\ the $\CP$-q.s.\ order in $\CC$. It remains to note that $\ind_{A_n}\peq\ind_{B_n}$, $n\in\N$.}  Set $\mf B$ to be the set of all $\mu\in\ca_+(\CC)$ with $\|\mu\|_{\CC^*}\le 1$. We obtain
\[\sup\{\mu(B)\mid \mu\in\mf B,\,B\in\CF,\,\P^*(B)\le 2^{-n}\}=\sup\{\|\ind_B\|_{L^\Phi(\CP)}\mid B\in\CF,\,\P^*(B)\le 2^{-n}\}\to 0,\quad n\to\infty.\]
This shows that, for all $\eps>0$, there is $\delta>0$ such that $\P^*(B)\le \delta$ implies $\mu(B)\le\eps$, no matter the choice of $\mu\in\mf B$. Moreover, $\mf B$ is bounded in total variation. By \cite[Theorem 4.7.25]{Bogachev}, $\mf B$ and thus also the unit ball of $\ca(\CC)^*$ is weakly compact in $L^1(\P^*)$. This completes the verification of (5).

(5) implies (3): Under assumption (5), the unit ball of $\ca\big(L^\Phi(\CP)\big)$, which is sufficient to determine $\Norm_{L^\Phi(\CP)}$ on all of $L^\Phi(\CP)$, can be identified with a weakly compact subset $\mathcal Z\subset L^1(\P^*)$. Each $X\in\Linfty=L^\infty(\P^*)$ can be identified with a (linear) continuous function on $\mathcal Z$, and if $\Linfty \ni X_n\downarrow 0$ $\CP$-q.s.\ (or $\P^*$-a.s.), the associated sequence of functions converges pointwise to $0$ on $\mathcal Z$. As this pointwise convergence must be uniform, $\sigma$-order continuity of $\Norm_{L^\Phi(\CP)}$ on $\Linfty$ follows.

We now observe
\begin{itemize}
\item For each $X\in\CC$ and each $c\in\R$, the sequence $Y_k:=k(X-c\ind_\Omega)^+\wedge\ind_\Omega\in\CC\cap\Linfty$, $k\in\N$, satisfies $Y_k\uparrow \ind_{\{X>c\}}$ in $\Linfty$.
\item For each increasing sequence $(A_n)_{n\in\N}$ of events in $\Lambda:=\{A\in\CF\mid \ind_A\in\CC\}$, $\ind_{A_n}\uparrow \ind_{\bigcup_{k\in\N}A_k}$ holds in $\Linfty$. 
\end{itemize}
Arguing as in Lemma~\ref{lem:monclass} and using $\sigma$-order continuity of the norm as well as closedness of $\CC$ shows $\CC\cap\Linfty=\Linfty$, i.e., $\mathcal M:=\cl(\Linfty)\subset\CC$. 

Towards a contradiction, assume that we can find $X\in\CC\setminus \mathcal M$. Then, there is a measure $0\neq\mu\in\CC^*=\ca(L^\Phi(\CP))$ such that 
\[\mu|_{\mathcal M}\equiv 0\quad\text{and}\quad \int X\,{\rm d}\mu \neq 0.\]
This however would mean $\mu|_{\Linfty}\equiv 0$, which is impossible. $\CC\subset\mathcal M$ follows. 

We have already proved (6) above. 

For (7), assume that condition \eqref{eq:growth} holds. Then, there exist $a>0$ and $b\geq 0$ such that $\phi_\P(x)\geq ax-b$ for all $\P\in \CP$ and $x\in [0,\infty)$. By~\eqref{eq:normP}, 
\begin{equation}\label{eq:supnormP}
\sup_{\P\in\CP}a(1+b)^{-1}\E_\P[|X|]\leq \|X\|_{L^\Phi(\CP)},\quad \text{for }X\in L^\Phi(\CP).
\end{equation}
The assertion follows with (5), and the proof is complete.
 \end{proof}

\section{Proofs of Section \ref{sec.applications}}\label{app.C}

\begin{proof}[Proof of Proposition~\ref{prop:density}]
As each subset of a separable normed space is separable itself, we can w.l.o.g.\ consider the maximal case $\CH=C_b$.
By Theorem \ref{thm:L1}, there exists some constant $\kappa>0$ such that 
$$\|X\|_{L^\Phi(\CP)}\leq \kappa \|X\|_{L^\infty(\CP)},\quad\text{for all }X\in C_b.$$
Let $d$ be a metric consistent with the topology on $\Om$, and $(\om_n)_{n\in \N}$ be dense in $\Omega$. For $m,n\in \N$ and $\om\in \Om$, let $X_{m,n}(\om):=d(\om,\om_n)\wedge m$. The algebra $\mathcal A\subset C_b$ generated by $\{\ind_\Omega\}\cup\{(X_{m,n})\mid m,n\in \N\}$ is separable and separates the points of each compact set $K\subset \Omega$. We show that the separable set
\[
 \mathcal M:=\big\{(X_0\wedge m\ind_\Omega)\vee(-m\ind_\Omega) \,\big|\, X_0\in \mathcal A, \, m\ge 0\big\} 
\]
is dense in $\CC$. To this end, let $X\in C_b$, $\ep>0$, and $K\subset \Om$ compact with $$\|\ind_{\Om\setminus K}\|_{L^\Phi(\CP)}<\frac{\ep}{2(1+2\|X\|_\infty)}.$$
 By the Stone-Weierstrass Theorem, there exists some $Y\in \mathcal M$ with $\|Y\|_{L^\infty(\CP)}\leq 1+\|X\|_{L^\infty(\CP)}$ and
 \[
  \big\|(X-Y)\ind_K\big\|_{L^\infty(\CP)}<\frac{\ep}{2\kappa}.
 \]
 Hence,
 \begin{align*}
  \|X-Y\|_{L^\Phi(\CP)}&\leq \big\|(X-Y)\ind_K\big\|_{L^\Phi(\CP)}+\big\|(X-Y)\ind_{\Om\setminus K}\big\|_{L^\Phi(\CP)}\\
  &\leq \kappa \big\|(X-Y)\ind_K\big\|_{L^\infty(\CP)} + \big(\|X\|_{\Linfty}+\|Y\|_{\Linfty}\big) \|\ind_{\Om\setminus K}\|_{L^\Phi(\CP)}< \ep.
 \end{align*}
\end{proof}

\begin{proof}[Proof of Lemma~\ref{lem:density}]
(1) trivially implies \eqref{eq:compact}. 

Under condition (2), let $\ep>0$ and set $t:=\ep^{-1}$. Choose $K\subset\Omega$ compact with $$\phi_\Max(t)\sup_{\P\in\CP_t}\P(\Omega\setminus K)\le 1.$$ Then,
\[\sup_{\P\in\CP_t}\E_\P[\phi_\P(t\ind_{\Omega\setminus K})]\le\sup_{\P\in\CP_t}\E_\P[\phi_\Max(t\ind_{\Omega\setminus K})]=\phi_\Max(t)\sup_{\P\in\CP_t}\P(\Omega\setminus K)\le 1.\]
Moreover, 
\[\sup_{\P\in\CP\setminus\CP_t}\E_\P[\phi_\P(t\ind_{\Omega\setminus K})]\le 1.\]
This entails $\|\ind_{\Omega\setminus K}\|_{L^\Phi(\CP)}\le \ep$.

Condition (3) is a special case of condition (2).

Suppose now that \eqref{eq:growth} and \eqref{eq:compact} hold. Then, by \eqref{eq:growth}, there exist $a>0$ and $b\ge 0$ such that $ax-b\leq \inf_{\P\in \CP}\phi_\P(x)$ for all $x\geq 0$.
By~\eqref{eq:supnormP}, $\CP$ is a bounded subset of $L^\Phi(\CP)^*$, and we have 
\[\sup_{\P\in\CP}\E_\P[|X|]\le \frac{1+b}{a}\|X\|_{L^\Phi(\CP)}\quad\text{for all }X\in L^\Phi(\CP).\]
Replacing $X$ by $\ind_{\Omega\setminus K}$ for suitable compacts $K\subset\Omega$ immediately yields tightness of $\CP$. 
\end{proof}

\begin{proof}[Proof of Corollary~\ref{cor:representation}]

 Let $(X_n)_{n\in \N}\subset C_b$ with $X_n\downarrow 0$ as $n\to \infty$ and $\al>0$. Then, $\phi_\Max(\al X_n)\in C_b$ for all $n\in \N$ with $\phi_\Max(\al X_n)\downarrow 0$ as $n\to \infty$. Since $\CP$ is weakly compact, \cite[Corollary 33]{Peng} implies
 \[
  \lim_{n\to\infty}\sup_{\P\in \CP}\E_\P[\phi_\P(\al |X_n|)]=0\quad \text{as }n\to \infty.
 \]
 This suffices to conclude $\lim_{n\to\infty}\|X_n\|_{L^\Phi(\CP)}=0$. 
  To $\ell\in \CC^*$ positive we can thus apply \cite[Theorem 7.8.1]{Bogachev} as in the proof Lemma~\ref{lem:sigmaorder} to obtain $\ell=\mu$ for a unique measure $\mu\in \ca(\CP)_+$. 
 By standard arguments, one can extend this identity to the closure $\CC$ of $C_b$, which also yields $\mu\in\ca(\CC)$. To this end, one uses the observation from the proof of Proposition \ref{prop:Banach} that every norm convergent sequence contains a $\CP$-q.s.\ convergent subsequence. 
 \end{proof}

\begin{proof}[Proof of Proposition~\ref{prop:option}]
We first prove that $\CH_X$ is a sublattice of $L^\Phi(\CP)$. The latter space is Dedekind $\sigma$-complete and thus also uniformly complete (\cite[Proposition 1.1.8]{MeyNie}). As such, we may replicate the argument in the proof of \cite[Theorem 3.1]{Options}. 
$\big(\CC_X,\peq,\Norm_{L^\Phi(\CP)}\big)$ is a Banach lattice by Proposition~\ref{lem:representation0}. 

Now, the span of the countable set $\{\ind_\Omega\}\cup\{(X-k\ind_\Omega)^+\mid k\tn{ rational}\}$ over the rational numbers lies dense in $\CH_X$, whence separability of $\CH_X$ and its norm closure $\CC_X$ follow. 
\end{proof}

\begin{proof}[Proof of Corollary~\ref{cor:ae}]
Note that $J_{\P^*}(L^\Phi(\CP))\subset L^1(\P^*)$ is an ideal on which $\P^*$ acts as a strictly positive bounded linear functional. The assertion thus follows directly from \cite[Corollary 3.2(b)]{Options}. 
\end{proof}
 
\end{appendix}

\bibliographystyle{abbrv}


\end{document}